\definecolor{orcid_color}{HTML}{A6CE39}
\DeclareRobustCommand{\orcidicon}{%
	\raisebox{.2mm}{\scalerel*{%
	\begin{tikzpicture}[xscale=1,yscale=-1,transform shape]
	\filldraw[color=orcid_color] svg {M256,128c0,70.7-57.3,128-128,128C57.3,256,0,198.7,0,128C0,57.3,57.3,0,128,0C198.7,0,256,57.3,256,128z};
	\filldraw[color=white] svg {M86.3,186.2H70.9V79.1h15.4v48.4V186.2z} svg {M108.9,79.1h41.6c39.6,0,57,28.3,57,53.6c0,27.5-21.5,53.6-56.8,53.6h-41.8V79.1z M124.3,172.4h24.5
		c34.9,0,42.9-26.5,42.9-39.7c0-21.5-13.7-39.7-43.7-39.7h-23.7V172.4z} svg {M88.7,56.8c0,5.5-4.5,10.1-10.1,10.1c-5.6,0-10.1-4.6-10.1-10.1c0-5.6,4.5-10.1,10.1-10.1
		C84.2,46.7,88.7,51.3,88.7,56.8z};
	\end{tikzpicture}}{|}}%
}
\newcommand{\orcid}[1]{\href{https://orcid.org/#1}{\orcidicon}}
\theoremstyle{plain}
\newtheorem{thm}{Theorem}[section]
\newtheorem{prop}[thm]{Proposition}
\newtheorem{lem}[thm]{Lemma}
\newtheorem{claim}{Claim}
\theoremstyle{definition}
\newtheorem{defn}[thm]{Definition}
\newtheorem{exam}[thm]{Example}
\newtheorem{assump}[thm]{Assumption}
\newtheorem{rem}[thm]{Remark}
\newcommand{\arxiv}[1]{arXiv:\href{http://arxiv.org/abs/#1}{#1}}
\newcommand{\hypergeometricseries}[5]{{}_{#1}F_{#2}\!\left(\left.\!\!\!\begin{array}{c} #3 \\ #4 \end{array}\!\!\right| #5 \right)}
\begin{document}

\title[Tight relative $t$-designs on two shells]{Tight relative $t$-designs on two shells in hypercubes, and Hahn and Hermite polynomials}
\author{Eiichi Bannai}
\address{Faculty of Mathematics, Kyushu University (emeritus), Japan}
\email{bannai@math.kyushu-u.ac.jp}
\author{Etsuko Bannai}
\email{et-ban@rc4.so-net.ne.jp}
\author[Hajime Tanaka]{Hajime Tanaka\,\orcid{0000-0002-5958-0375}}
\address{\href{http://www.math.is.tohoku.ac.jp/}{Research Center for Pure and Applied Mathematics}, Graduate School of Information Sciences, Tohoku University, Sendai 980-8579, Japan}
\email{htanaka@tohoku.ac.jp}
\author{Yan Zhu}
\address{College of Science, University of Shanghai for Science and Technology, Shanghai 200093, China}
\email{zhuyan@usst.edu.cn}
\keywords{Relative $t$-design; association scheme; coherent configuration; Terwilliger algebra; Hahn polynomial; Hermite polynomial}
\subjclass[2010]{05B30, 05E30, 33C45} 

\begin{abstract}
Relative $t$-designs in the $n$-dimensional hypercube $\mathcal{Q}_n$ are equivalent to weighted regular $t$-wise balanced designs, which generalize combinatorial $t$-$(n,k,\lambda)$ designs by allowing multiple block sizes as well as weights.
Partly motivated by the recent study on tight Euclidean $t$-designs on two concentric spheres, in this paper we discuss tight relative $t$-designs in $\mathcal{Q}_n$ supported on two shells.
We show under a mild condition that such a relative $t$-design induces the structure of a coherent configuration with two fibers.
Moreover, from this structure we deduce that a polynomial from the family of the Hahn hypergeometric orthogonal polynomials must have only integral simple zeros.
The Terwilliger algebra is the main tool to establish these results.
By explicitly evaluating the behavior of the zeros of the Hahn polynomials when they degenerate to the Hermite polynomials under an appropriate limit process, we prove a theorem which gives a partial evidence that the non-trivial tight relative $t$-designs in $\mathcal{Q}_n$ supported on two shells are rare for large $t$.
\end{abstract}

\maketitle

\hypersetup{pdfborder={0 0 1}} 

\section{Introduction}

This paper is a contribution to the study of \emph{relative} $t$-\emph{designs} in $Q$-polynomial association schemes.
In the Delsarte theory \cite{Delsarte1973PRRS}, the concept of $t$-\emph{designs} is introduced for arbitrary $Q$-polynomial association schemes.
For the Johnson scheme $J(n,k)$, the $t$-designs in the sense of Delsarte are shown to be the same thing as the combinatorial $t$-$(n,k,\lambda)$ designs.
There are similar interpretations of $t$-designs in some other important families of $Q$-polynomial association schemes \cite{Delsarte1973PRRS,Delsarte1976JCTA,Delsarte1978SIAM,Munemasa1986GC,Stanton1986GC}.
The concept of relative $t$-designs is also due to Delsarte \cite{Delsarte1977PRR}, and is a relaxation of that of $t$-designs.
Relative $t$-designs can again be interpreted in several cases, including $J(n,k)$.
For the $n$-dimensional hypercube $\mathcal{Q}_n$ (or the binary Hamming scheme $H(n,2)$) which will be our central focus in this paper, these are equivalent to the weighted regular $t$-\emph{wise balanced designs}, which generalize the combinatorial $t$-$(n,k,\lambda)$ designs by allowing multiple block sizes as well as weights.

The Delsarte theory has a counterpart for the unit sphere $\mathbb{S}^{n-1}$ in $\mathbb{R}^n$, established by Delsarte, Goethals, and Seidel \cite{DGS1977GD}.
The $t$-designs in $\mathbb{S}^{n-1}$ are commonly called the \emph{spherical} $t$-\emph{designs}, and are essentially the equally-weighted \emph{cubature formulas} of degree $t$ for the spherical integration, a concept studied extensively in numerical analysis.
Spherical $t$-designs were later generalized to \emph{Euclidean} $t$-\emph{designs} by Neumaier and Seidel \cite{NS1988IM} (cf.~\cite{DS1989LAA}).
Euclidean $t$-designs are in general supported on multiple concentric spheres in $\mathbb{R}^n$, and it follows that we may think of them as the natural counterpart of relative $t$-designs in $\mathbb{R}^n$.
This point of view was discussed in detail by Bannai and Bannai \cite{BB2012JAMC}.
See also \cite{BBTZ2017GC,BBZ2015PSIM}.
The success and the depth of the theory of Euclidean $t$-designs (cf.~\cite{SHK2019B}) has been one driving force for the recent research activity on relative $t$-designs in $Q$-polynomial association schemes; see, e.g., \cite{BB2012JAMC,BBB2014DM,BBST2015EJC,BBTZ2017GC,BBZ2015PSIM,BBZ2017DCC,BZ2019EJC,LBB2014GC,Xiang2012JCTA,YHG2015DM,ZBB2016DM}.

A relative $t$-design in a $Q$-polynomial association scheme $(X,\mathscr{R})$ is often defined as a certain \emph{weighted subset} of the vertex set $X$, i.e., a pair $(Y,\omega)$ of a subset $Y$ of $X$ and a function $\omega:Y\rightarrow (0,\infty)$.
We are given in advance a `base vertex' $x\in X$, and $(Y,\omega)$ gives a `degree-$t$ approximation' of the shells (or spheres or subconstituents) with respect to $x$ on which $Y$ is supported.
See Sections \ref{sec: association schemes} and \ref{sec: relative t-designs} for formal definitions.
Bannai and Bannai \cite{BB2012JAMC} proved a Fisher-type lower bound on $|Y|$, and we call $(Y,\omega)$ \emph{tight} if it attains this bound.
We may remark that $t$ must be even in this case.
In this paper, we continue the study (cf.~\cite{BBB2014DM,BBZ2017DCC,LBB2014GC,Xiang2012JCTA,YHG2015DM}) of tight relative $t$-designs in the hypercubes $\mathcal{Q}_n$, which are one of the most important families of $Q$-polynomial association schemes.
The Delsarte theory directly applies to the tight relative $t$-designs in $\mathcal{Q}_n$ supported on one shell, say, the $k^{\mathrm{th}}$ shell, as these are equivalent to the tight combinatorial $t$-$(n,k,\lambda)$ designs.
(We note that the $k^{\mathrm{th}}$ shell induces $J(n,k)$.)
Our aim is to extend this structure theory to those supported on two shells.
We may view the results of this paper roughly as counterparts to (part of) the results by Bannai and Bannai \cite{BB2010CM,BB2014MJCNT} on tight Euclidean $t$-designs on two concentric spheres.

Let $t=2e$ be even.
In Theorem \ref{counterpart of Wilson's theorem}, which is our first main result, we show under a mild condition that a tight relative $2e$-design in $\mathcal{Q}_n$ supported on two shells induces the structure of a coherent configuration with two fibers.
Moreover, from this structure we deduce that a certain polynomial of degree $e$, known as a \emph{Hahn polynomial}, must have only integral simple zeros.
We note that the case $e=1$ was handled previously by Bannai, Bannai, and Bannai \cite{BBB2014DM}.
The Hahn polynomials are a family of hypergeometric orthogonal polynomials in the Askey scheme \cite[Section 1.5]{KS1998R}, and that their zeros are integral provides quite a strong necessary condition on the existence of such relative $2e$-designs.
The corresponding necessary condition for the tight combinatorial $2e$-$(n,k,\lambda)$ designs from the Delsarte theory was used successfully by Bannai \cite{Bannai1977QJMO}; that is to say, he showed that, for each given integer $e\geqslant 5$, there exist only finitely many non-trivial tight $2e$-$(n,k,\lambda)$ designs, where $n$ and $k$ (and thus $\lambda$) vary.
See also \cite{DS-G2013JAC,Peterson1977OJM,Xiang2018JAC}.
We extend Bannai's method to prove our second main result, Theorem \ref{counterpart of Bannai's theorem}, which presents a version of his theorem for our case.

The sections other than Sections \ref{sec: main result} and \ref{sec: finiteness result} are organized as follows.
We collect the necessary background material in Sections \ref{sec: association schemes} and \ref{sec: relative t-designs}.
Section \ref{sec: relative t-designs} also includes a few general results on relative $t$-designs in $Q$-polynomial association schemes.
As in \cite{BBST2015EJC,Tanaka2009EJC}, our main tool in the analysis of relative $t$-designs is the \emph{Terwilliger algebra} \cite{Terwilliger1992JAC,Terwilliger1993JACa,Terwilliger1993JACb}, which is a non-commutative semisimple $\mathbb{C}$-algebra containing the adjacency algebra.
Section \ref{sec: Terwilliger algebra of hypercube} is devoted to detailed descriptions of the Terwilliger algebra of $\mathcal{Q}_n$.
It is well known (cf.~\cite{KLS2010B,KS1998R}) that the Hahn polynomials (${}_3F_2$) degenerate to the Hermite polynomials (${}_2F_0$) by an appropriate limit process, and a key in Bannai's method above was to evaluate precisely the behavior of the zeros of the Hahn polynomials in this process.
In Section \ref{sec: zeros}, we revisit this part of the method in a form suited to our purpose.
Our account will also be simpler than that in \cite{Bannai1977QJMO}.
In Appendix, we provide a proof of a number-theoretic result (Proposition \ref{variation of Schur's theorem}) which is a variation of a result of Schur \cite[Satz I]{Schur1929SPAW}.

\section{Coherent configurations and association schemes}\label{sec: association schemes}

We begin by recalling the concept of coherent configurations.

\begin{defn}
The pair $(X,\mathscr{R})$ of a finite set $X$ and a set $\mathscr{R}$ of non-empty subsets of $X^2$ is called a \emph{coherent configuration} on $X$ if it satisfies the following (C\ref{C1})--(C\ref{C4}):
\begin{enumerate}[({C}1)]
\item\label{C1} $\mathscr{R}$ is a partition of $X^2$.
\item There is a subset $\mathscr{R}_0$ of $\mathscr{R}$ such that
\begin{equation*}
	\bigsqcup_{R\in\mathscr{R}_0} \!\! R=\{(x,x):x\in X\}.
\end{equation*}
\item\label{C3} $\mathscr{R}$ is invariant under the transposition $\tau:(x,y)\mapsto (y,x)$ $((x,y)\in X^2)$, i.e., $R^{\tau}\in \mathscr{R}$ for all $R\in\mathscr{R}$.
\item\label{C4} For all $R,S,T\in\mathscr{R}$ and $(x,y)\in T$, the number
\begin{equation*}
	p_{R,S}^T:=\bigl|\{z\in X:(x,z)\in R,\,(z,y)\in S\}\bigr|
\end{equation*}
is independent of the choice of $(x,y)\in T$.
\end{enumerate}
Moreover, a coherent configuration $(X,\mathscr{R})$ on $X$ is called \emph{homogeneous} if $|\mathscr{R}_0|=1$, and an \emph{association scheme} if $R^{\tau}=R$ for all $R\in\mathscr{R}$.
\end{defn}

\begin{rem}\label{Schurian}
Suppose that a finite group $\mathfrak{G}$ acts on $X$, and let $\mathscr{R}$ be the set of the orbitals of $\mathfrak{G}$, that is to say, the orbits of $\mathfrak{G}$ in its natural action on $X^2$.
Then $(X,\mathscr{R})$ is a coherent configuration.
Moreover, $(X,\mathscr{R})$ is homogeneous (resp.~an association scheme) if and only if the action of $\mathfrak{G}$ on $X$ is transitive (resp.~generously transitive, i.e., for any $x,y\in X$ we have $(x^g,y^g)=(y,x)$ for some $g\in\mathfrak{G}$).
\end{rem}

Let $(X,\mathscr{R})$ be a coherent configuration as above.
For every $R\in\mathscr{R}_0$, let $\Phi_R$ be the subset of $X$ such that $R=\{(x,x):x\in \Phi_R\}$.
Then we have
\begin{equation*}
	\bigsqcup_{R\in\mathscr{R}_0}\!\! \Phi_R=X.
\end{equation*}
We call the $\Phi_R$ $(R\in\mathscr{R}_0)$ the \emph{fibers} of $(X,\mathscr{R})$.
By setting in (C\ref{C4}) either $R\in\mathscr{R}_0$ and $S=T$, or $S\in\mathscr{R}_0$ and $R=T$, it follows that for every $T\in\mathscr{R}$, we have $T\subset \Phi_R\times \Phi_S$ for some $R,S\in\mathscr{R}_0$.
In particular, $(X,\mathscr{R})$ is homogeneous whenever it is an association scheme.
Let
\begin{equation*}
	\gamma_{R,S}=\bigl|\{T\in\mathscr{R}:T\subset \Phi_R\times \Phi_S\}\bigr| \qquad (R,S\in\mathscr{R}_0).
\end{equation*}
The matrix
\begin{equation*}
	[\gamma_{R,S}]_{R,S\in\mathscr{R}_0},
\end{equation*}
which is symmetric by (C\ref{C3}), is called the \emph{type} of $(X,\mathscr{R})$.

Let $M_X(\mathbb{C})$ be the $\mathbb{C}$-algebra of all complex matrices with rows and columns indexed by $X$, and let $V=\mathbb{C}^X$ be the $\mathbb{C}$-vector space of complex column vectors with coordinates indexed by $X$.
We endow $V$ with the Hermitian inner product
\begin{equation*}
	\langle u,v\rangle=v^{\dagger}u \qquad (u,v\in V),
\end{equation*}
where $^{\dagger}$ denotes adjoint.
For every $R\in\mathscr{R}$, let $A_R\in M_X(\mathbb{C})$ be the adjacency matrix of the graph $(X,R)$ (directed, in general), i.e.,
\begin{equation*}
	(A_R)_{x,y}=\begin{cases} 1 & \text{if} \ (x,y)\in R, \\ 0 & \text{otherwise}, \end{cases} \qquad (x,y\in X).
\end{equation*}
Then (C\ref{C1})--(C\ref{C4}) above are rephrased as follows:
\begin{enumerate}[({A}1)]
\item\label{A1} $\displaystyle\sum_{R\in\mathscr{R}}A_R=J$ (the all-ones matrix).
\item\label{A2} $\displaystyle\sum_{R\in\mathscr{R}_0} \!\! A_R=I$ (the identity matrix).
\item\label{A3} $(A_R)^{\dagger}\in\{A_S:S\in \mathscr{R}\}$ \ $(R\in\mathscr{R})$.
\item\label{A4} $\displaystyle A_RA_S=\sum_{T\in\mathscr{R}}p_{R,S}^T A_T$ \ $(R,S\in\mathscr{R})$.
\end{enumerate}
Let
\begin{equation*}
	\bm{A}=\operatorname{span}\{A_R:R\in \mathscr{R}\}.
\end{equation*}
Then from (A\ref{A2}) and (A\ref{A4}) it follows that $\bm{A}$ is a subalgebra of $M_X(\mathbb{C})$, called the \emph{adjacency algebra} of $(X,\mathscr{R})$.
We note that $\bm{A}$ is semisimple as it is closed under $^{\dagger}$ by virtue of (A\ref{A3}).
By (A\ref{A1}), $\bm{A}$ is also closed under entrywise (or \emph{Hadamard} or \emph{Schur}) multiplication, which we denote by $\circ$.
The $A_R$ are the (central) primitive idempotents of $\bm{A}$ with respect to $\circ$, i.e.,
\begin{equation*}
	A_R\circ A_S=\delta_{R,S}A_R, \qquad \sum_{R\in\mathscr{R}}A_R=J.
\end{equation*}

\begin{rem}\label{centralizer algebra}
If $(X,\mathscr{R})$ arises from a group action as in Remark \ref{Schurian}, then $\bm{A}$ coincides with the centralizer algebra (or Hecke algebra or commutant) for the corresponding permutation representation $g\mapsto P_g$ $(g\in\mathfrak{G})$ on $V$, i.e.,
\begin{equation*}
	\bm{A}=\{B\in M_X(\mathbb{C}): BP_g=P_gB \ (g\in\mathfrak{G})\}.
\end{equation*}
\end{rem}

A subalgebra of $M_X(\mathbb{C})$ is called a \emph{coherent algebra} if it contains $J$, and is closed under $\circ$ and $^{\dagger}$.
We note that the coherent algebras are precisely the adjacency algebras of coherent configurations.
It is clear that the intersection of coherent algebras in $M_X(\mathbb{C})$ is again a coherent algebra.
In particular, for any subset $\bm{S}$ of $M_X(\mathbb{C})$, we can speak of \emph{the} smallest coherent algebra containing $\bm{S}$, which we call the \emph{coherent closure} of $\bm{S}$.

From now on, we assume that $(X,\mathscr{R})$ is an association scheme.
As is the case for many examples of association schemes, we write
\begin{equation*}
	\mathscr{R}=\{R_0,R_1,\dots,R_n\}, \quad \text{where} \ \ \mathscr{R}_0=\{R_0\},
\end{equation*}
and say that $(X,\mathscr{R})$ has $n$ \emph{classes}.
We will then abbreviate $p_{i,j}^k=p_{R_i,R_j}^{R_k}$, $A_i=A_{R_i}$, and so on.
The adjacency algebra $\bm{A}$ is commutative in this case, and hence it has a basis $E_0,E_1,\dots,E_n$ consisting of the (central) primitive idempotents, i.e.,
\begin{equation*}
	E_iE_j=\delta_{i,j}E_i, \qquad \sum_{i=0}^nE_i=I.
\end{equation*}
Put differently, $E_0V,E_1V,\dots,E_nV$ are the maximal common eigenspaces (or homogeneous components or isotypic components) of $\bm{A}$, and the $E_i$ are the corresponding orthogonal projections.
Since the $A_i$ are real symmetric matrices, so are the $E_i$.
Note that the matrix $|X|^{-1}J\in\bm{A}$ is an idempotent with rank one, and thus primitive.
We will always set
\begin{equation*}
	E_0=\frac{1}{|X|}J.
\end{equation*}
For convenience, we let
\begin{equation*}
	A_i=E_i:=O \ \text{(the zero matrix)} \quad \text{if} \ i<0 \ \text{or} \ i>n.
\end{equation*}

Though our focus in this paper will be on $Q$-polynomial association schemes, we first recall the $P$-polynomial property for completeness.
We say that the association scheme $(X,\mathscr{R})$ is $P$-\emph{polynomial} (or \emph{metric}) with respect to the ordering $A_0,A_1,\dots,A_n$ if there are non-negative integers $a_i,b_i,c_i$ $(0\leqslant i\leqslant n)$ such that $b_n=c_0=0$, $b_{i-1}c_i\ne 0$ $(1\leqslant i\leqslant n)$, and
\begin{equation*}
	A_1A_i=b_{i-1}A_{i-1}+a_iA_i+c_{i+1}A_{i+1} \qquad (0\leqslant i\leqslant n),
\end{equation*}
where $b_{-1}$ and $c_{n+1}$ are indeterminates.
In this case, $A_1$ recursively generates $\bm{A}$, and hence has $n+1$ distinct eigenvalues $\theta_0,\theta_1,\dots,\theta_n\in\mathbb{R}$, where we write
\begin{equation}\label{eigenvalues}
	A_1=\sum_{i=0}^n \theta_iE_i.
\end{equation}
We note that $(X,\mathscr{R})$ is $P$-polynomial as above precisely when the graph $(X,R_1)$ is a \emph{distance-regular graph} and $(X,R_i)$ is the distance-$i$ graph of $(X,R_1)$ $(0\leqslant i\leqslant n)$.
See, e.g., \cite{BI1984B,BCN1989B,DKT2016EJC,Godsil1993B} for more information on distance-regular graphs.

We say that $(X,\mathscr{R})$ is $Q$-\emph{polynomial} (or \emph{cometric}) with respect to the ordering $E_0,E_1,\dots,E_n$ if there are real scalars $a_i^*,b_i^*,c_i^*$ $(0\leqslant i\leqslant n)$ such that $b_n^*=c_0^*=0$, $b_{i-1}^*c_i^*\ne 0$ $(1\leqslant i\leqslant n)$, and
\begin{equation}\label{*3-term recurrence}
	E_1\circ E_i=\frac{1}{|X|}(b_{i-1}^*E_{i-1}+a_i^*E_i+c_{i+1}^*E_{i+1}) \qquad (0\leqslant i\leqslant n),
\end{equation}
where $b_{-1}^*$ and $c_{n+1}^*$ are indeterminates.
In this case, $|X|E_1$ recursively generates $\bm{A}$ with respect to $\circ$, and hence has $n+1$ distinct entries $\theta_0^*,\theta_1^*,\dots,\theta_n^*\in\mathbb{R}$, where we write
\begin{equation}\label{dual eigenvalues}
	|X|E_1=\sum_{i=0}^n\theta_i^*A_i.
\end{equation}
We call the $\theta_i^*$ the \emph{dual eigenvalues} of $|X|E_1$.
We may remark that $E_1\circ E_i$, being a principal submatrix of $E_1\otimes E_i$, is positive semidefinite, so that the scalars $a_i^*,b_i^*$, and $c_i^*$ are non-negative (the so-called \emph{Krein condition}).
The $Q$-polynomial association schemes are an important subject in their own right, and we refer the reader to \cite{GVW2019pre,Kodalen2019D} and the references therein for recent activity.

Below we give two fundamental examples of $P$- and $Q$-polynomial association schemes, both of which come from transitive group actions.
See \cite{BI1984B,BCN1989B,Delsarte1973PRRS} for the details.

\begin{exam}\label{Johnson}
Let $v$ and $k$ be positive integers with $v>k$, and let $X$ be the set of $k$-subsets of $\{1,2,\dots,v\}$.
Set $n=\min\{k,v-k\}$.
For $x,y\in X$ and $0\leqslant i\leqslant n$, we let $(x,y)\in R_i$ if $|x\cap y|=k-i$.
The $R_i$ are the orbitals of the symmetric group $\mathfrak{S}_v$ acting on $X$.
We call $(X,\mathscr{R})$ a \emph{Johnson scheme} and denote it by $J(v,k)$.
The eigenvalues of $A_1$ are given in decreasing order by
\begin{equation*}
	\theta_i=(k-i)(v-k-i)-i \qquad (0\leqslant i\leqslant n),
\end{equation*}
and $J(v,k)$ is $Q$-polynomial with respect to the corresponding ordering of the $E_i$ (cf.~\eqref{eigenvalues}).
\end{exam}

\begin{exam}\label{Hamming}
Let $q\geqslant 2$ be an integer and let $X=\{0,1,\dots,q-1\}^n$.
For $x,y\in X$ and $0\leqslant i\leqslant n$, we let $(x,y)\in R_i$ if $x$ and $y$ differ in exactly $i$ coordinate positions.
The $R_i$ are the orbitals of the wreath product $\mathfrak{S}_q\wr\mathfrak{S}_n$ of the symmetric groups $\mathfrak{S}_q$ and $\mathfrak{S}_n$ acting on $X$.
We call $(X,\mathscr{R})$ a \emph{Hamming scheme} and denote it by $H(n,q)$.
The eigenvalues of $A_1$ are given in decreasing order by
\begin{equation*}
	\theta_i=n(q-1)-qi \qquad (0\leqslant i\leqslant n),
\end{equation*}
and $H(n,q)$ is $Q$-polynomial with respect to the corresponding ordering of the $E_i$ (cf.~\eqref{eigenvalues}).
The Hamming scheme $H(n,2)$ is also known as the $n$-\emph{cube} (or $n$-\emph{dimensional hypercube}) and is denoted by $\mathcal{Q}_n$.
\end{exam}

\begin{assump}\label{assume Q-polynomial}
For the rest of this section and in Section \ref{sec: relative t-designs}, we assume that $(X,\mathscr{R})$ is an association scheme and is $Q$-polynomial with respect to the ordering $E_0,E_1,\dots,E_n$ of the primitive idempotents.
\end{assump}

In general, for any positive semidefinite Hermitian matrices $B,C\in M_X(\mathbb{C})$, we have (cf.~\cite{Tanaka2009LAAa})
\begin{equation*}
	(B\circ C)V=\operatorname{span}(BV\circ CV),
\end{equation*}
where
\begin{equation*}
	BV\circ CV=\{u\circ v: u\in BV,\, v\in CV\}.
\end{equation*}
Hence it follows from \eqref{*3-term recurrence} that
\begin{equation}\label{Hadamard with E1}
	\operatorname{span} (E_1V\circ E_iV)=\begin{cases} E_{i-1}V+E_iV+E_{i+1}V & \text{if} \  a_i^*\ne 0, \\ E_{i-1}V+E_{i+1}V & \text{if} \  a_i^*=0, \end{cases} \quad (0\leqslant i\leqslant n),
\end{equation}
from which it follows that
\begin{align}
	\sum_{i=0}^h\sum_{j=0}^k \operatorname{span}(E_iV\circ E_jV) &= \sum_{i=0}^h\sum_{j=0}^k \operatorname{span}(\underbrace{E_1V\circ\cdots\circ E_1V}_{i\,\text{times}}\circ E_jV) \label{Hadamard of flags} \\
	&= \sum_{i=0}^{h+k} E_iV \notag
\end{align}
for $0\leqslant h,k\leqslant n$.
See also \cite[Section 2.8]{BI1984B}.

We now fix a `base vertex' $x\in X$.
Let
\begin{equation*}
	X_i=\{y\in X:(x,y)\in R_i\} \qquad (0\leqslant i\leqslant n).
\end{equation*}
We call the $X_i$ the \emph{shells} (or \emph{spheres} or \emph{subconstituents}) of $(X,\mathscr{R})$ with respect to $x$.
For every $i$ $(0\leqslant i\leqslant n)$, define the diagonal matrix $E_i^*=E_i^*(x)\in M_X(\mathbb{C})$ by
\begin{equation*}
	(E_i^*)_{y,y}=\begin{cases} 1 & \text{if} \ y\in X_i, \\ 0 & \text{otherwise}, \end{cases} \qquad (y\in X).
\end{equation*}
Then we have
\begin{equation*}
	E_i^*E_j^*=\delta_{i,j}E_i^*, \qquad \sum_{i=0}^nE_i^*=I.
\end{equation*}
We call the $E_i^*$ the \emph{dual idempotents} of $(X,\mathscr{R})$ with respect to $x$.
The subspace
\begin{equation*}
	\bm{A}^*=\bm{A}^*(x)=\operatorname{span}\{E_0^*,E_1^*,\dots,E_n^*\}
\end{equation*}
is then a subalgebra of $M_X(\mathbb{C})$, which we call the \emph{dual adjacency algebra} of $(X,\mathscr{R})$ with respect to $x$.
The \emph{Terwilliger algebra} (or \emph{subconstituent algebra}) of $(X,\mathscr{R})$ with respect to $x$ is the subalgebra $\bm{T}=\bm{T}(x)$ of $M_X(\mathbb{C})$ generated by $\bm{A}$ and $\bm{A}^*$ \cite{Terwilliger1992JAC,Terwilliger1993JACa,Terwilliger1993JACb}.
We note that $\bm{T}$ is semisimple as it is closed under $^{\dagger}$.

\begin{rem}
If $(X,\mathscr{R})$ arises from a group action as in Remark \ref{Schurian}, which we recall is generously transitive in this case, then $\bm{T}$ is a subalgebra of the centralizer algebra for the action of the stabilizer $\mathfrak{G}_x$ of $x$ in $\mathfrak{G}$.
The two algebras are known to be equal, e.g., for $J(v,k)$ and $H(n,q)$; see \cite{GST2006JCTA,TFIL2019EJC}.
\end{rem}

For every subset $Y$ of $X$, let $\hat{Y}\in V$ be the characteristic vector of $Y$, i.e.,
\begin{equation*}
	(\hat{Y})_y=\begin{cases} 1 & \text{if} \ y\in Y, \\ 0 & \text{otherwise}, \end{cases} \qquad (y\in X).
\end{equation*}
In particular, $\hat{X}$ denotes the all-ones vector in $V$.
We will simply write $\hat{x}$ for the characteristic vector of the singleton $\{x\}$.
With this notation established, we have
\begin{equation*}
	\hat{X}_i=E_i^*\hat{X}=A_i\hat{x} \qquad (0\leqslant i\leqslant n),
\end{equation*}
from which it follows that
\begin{equation}\label{primary T-module}
	\bm{T}\hat{x}=\operatorname{span}\{\hat{X}_i: 0\leqslant i\leqslant n\}=\operatorname{span}\{E_i\hat{x}: 0\leqslant i\leqslant n \}.
\end{equation}
The $\bm{T}$-module $\bm{T}\hat{x}$ is easily seen to be irreducible with dimension $n+1$ (cf.~\cite[Lemma 3.6]{Terwilliger1992JAC}), and is called the \emph{primary} $\bm{T}$-module.

We define the \emph{dual adjacency matrix} $A_1^*=A_1^*(x)\in M_X(\mathbb{C})$ by (cf.~\eqref{dual eigenvalues})
\begin{equation}\label{dual adjacency matrix}
	A_1^*=|X| \operatorname{diag} E_1\hat{x}=\sum_{i=0}^n\theta_i^*E_i^*.
\end{equation}
Since the $\theta_i^*$ are mutually distinct, $A_1^*$ generates $\bm{A}^*$.
Moreover, since
\begin{equation*}
	A_1^*v=|X|(E_1\hat{x})\circ v \qquad (v\in V),
\end{equation*}
it follows from \eqref{Hadamard with E1} that
\begin{equation}\label{tridiagonal action}
	E_iA_1^*E_j=O \quad \text{if} \ |i-j|>1 \quad (0\leqslant i,j\leqslant n).
\end{equation}

Let $W$ be an irreducible $\bm{T}$-module.
We define the \emph{dual support} $W_s^*$, the \emph{dual endpoint} $r^*(W)$, and the \emph{dual diameter} $d^*(W)$ of $W$ by
\begin{equation*}
	W_s^*=\{i:E_iW\ne 0\}, \qquad r^*(W)=\min W_s^*, \qquad d^*(W)=|W_s^*|-1,
\end{equation*}
respectively.
We call $W$ \emph{dual thin} if $\dim E_iW\leqslant 1$ $(0\leqslant i\leqslant n)$.
We note that the primary $\bm{T}$-module $\bm{T}\hat{x}$ is dual thin, and that it is a unique irreducible $\bm{T}$-module up to isomorphism which has dual endpoint zero or dual diameter $n$.
The following lemma is an easy consequence of \eqref{tridiagonal action}:

\begin{lem}[{\cite[Lemma 3.12]{Terwilliger1992JAC}}]\label{Q-property}
With reference to Assumption \ref{assume Q-polynomial}, write $A_1^*=A_1^*(x)$, $\bm{A}^*=\bm{A}^*(x)$, $\bm{T}=\bm{T}(x)$.
Let $W$ be an irreducible $\bm{T}$-module and set $r^*=r^*(W)$, $d^*=d^*(W)$.
Then the following hold:
\begin{enumerate}
\item $A_1^*E_iW\subset E_{i-1}W+E_iW+E_{i+1}W$ $(0\leqslant i\leqslant n)$.
\item $W_s^*=\{r^*,r^*+1,\dots,r^*+d^*\}$.
\item $E_iA_1^*E_jW\ne 0$ if $|i-j|=1$ $(r^*\leqslant i,j\leqslant r^*+d^*)$.
\item Suppose that  $W$ is dual thin.
Then
\begin{equation*}
	\sum_{h=0}^i E_{r^*+h}W = \sum_{h=0}^i (A_1^*)^h E_{r^*}W \qquad (0\leqslant i\leqslant d^*).
\end{equation*}
In particular, $W=\bm{A}^*E_{r^*}W$.
\end{enumerate}
\end{lem}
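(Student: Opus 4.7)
The plan is to extract all four items from the single input \eqref{tridiagonal action}, combined with the irreducibility of $W$ and the fact that $A_1^*$ generates $\bm{A}^*$. Part~(i) is immediate: expanding $A_1^* E_i W = \sum_j E_j A_1^* E_i W$ via $I = \sum_j E_j$, each summand with $|j-i| > 1$ vanishes by \eqref{tridiagonal action}.

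For part~(ii) I would argue by contradiction. If $W_s^*$ is not contiguous, pick $i \notin W_s^*$ strictly between two elements of $W_s^*$; then $E_i W = 0$, while $W_- := \bigoplus_{h<i} E_h W$ and $W_+ := \bigoplus_{h>i} E_h W$ are each nonzero and sum to $W$. Each is $\bm{A}$-invariant because $\bm{A}$ is commutative, and part~(i) together with $E_i W = 0$ shows $A_1^* W_{\pm} \subset W_{\pm}$: the only potentially escaping term $A_1^* E_{i-1} W$ lies in $E_{i-2}W + E_{i-1}W + E_i W = E_{i-2}W + E_{i-1}W \subset W_-$, and symmetrically for $W_+$. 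Since $A_1^*$ generates $\bm{A}^*$, both $W_{\pm}$ are then $\bm{T}$-submodules, contradicting irreducibility. Thus $W_s^*$ is contiguous, and $|W_s^*| = d^*+1$ forces $W_s^* = \{r^*, r^*+1, \dots, r^*+d^*\}$.

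Part~(iii) refines the same dichotomy. Suppose for contradiction that $E_i A_1^* E_{i+1} W = 0$ for some $r^* \leqslant i < r^* + d^*$. Since $A_1^*$ is Hermitian and $W$ is $\bm{T}$-invariant, the restriction of $E_i A_1^* E_{i+1}$ to $W$ has adjoint $(E_{i+1} A_1^* E_i)|_W$, which forces $E_{i+1} A_1^* E_i W = 0$ as well. With both ``crossings'' annihilated, the same calculation as in~(ii) shows that $W_1 := \bigoplus_{h \leqslant i} E_h W$ and $W_2 := \bigoplus_{h \geqslant i+1} E_h W$ are $A_1^*$-invariant, hence $\bm{T}$-submodules; by~(ii) both are nonzero, and together they span $W$, contradicting irreducibility.

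For part~(iv) I would induct on $i$, the base case $i = 0$ being trivial. The inclusion $\sum_{h=0}^i (A_1^*)^h E_{r^*} W \subset \sum_{h=0}^i E_{r^*+h} W$ follows by applying $A_1^*$ to the inductive hypothesis and invoking~(i). For the reverse inclusion, pick any nonzero $v \in E_{r^*+i-1} W$ and decompose $A_1^* v$ into its (at most three) nonzero $E_j$-components: the full vector $A_1^* v$ and its two lower components all lie in $\sum_{h=0}^i (A_1^*)^h E_{r^*} W$ by the hypothesis and~(i), so the remaining component in $E_{r^*+i} W$ does too, and by~(iii) together with dual thinness this component spans $E_{r^*+i} W$. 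Setting $i = d^*$ and using that $\bm{A}^*$ is generated by $A_1^*$ yields $W = \bm{A}^* E_{r^*} W$. I do not anticipate a serious obstacle --- the arithmetic is light --- but the most delicate point is part~(iii), where Hermiticity of $A_1^*$ must be invoked to produce the second vanishing $E_{i+1} A_1^* E_i W = 0$; without it, $A_1^*$ would not preserve the candidate decomposition $W = W_1 \oplus W_2$.
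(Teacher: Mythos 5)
Your proof is correct, and it is exactly the standard argument: the paper itself gives no proof of Lemma \ref{Q-property}, merely citing \cite[Lemma 3.12]{Terwilliger1992JAC} and noting that the lemma is an easy consequence of \eqref{tridiagonal action}, which is precisely the single input you build everything from. Each step checks out, including the one delicate point you flag in (iii): since $W$ is invariant under both $E_iA_1^*E_{i+1}$ and its adjoint $E_{i+1}A_1^*E_i$ (as $\bm{T}$ is $\dagger$-closed), the vanishing of one on $W$ forces the vanishing of the other, and your use of dual thinness in (iv) to conclude that the surviving component spans $E_{r^*+i}W$ is exactly where that hypothesis is needed.
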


\section{Relative \texorpdfstring{$t$}{t}-designs in \texorpdfstring{$Q$}{Q}-polynomial association schemes}\label{sec: relative t-designs}

In this section, we develop some general theory on relative $t$-designs in $Q$-polynomial association schemes.

Recall Assumption \ref{assume Q-polynomial}.
Throughout this section, we fix a base vertex $x\in X$, and write $E_i^*=E_i^*(x)$ $(0\leqslant i\leqslant n)$, $A_1^*=A_1^*(x)$, $\bm{A}^*=\bm{A}^*(x)$, and $\bm{T}=\bm{T}(x)$.
In Introduction, we meant by a weighted subset of $X$ a pair $(Y,\omega)$ of a subset $Y$ of $X$ and a function $\omega:Y\rightarrow (0,\infty)$.
For convenience, however, we extend the domain of $\omega$ to $X$ by setting $\omega(y)=0$ for every $y\in X\backslash Y$.
We will also naturally identify $V$ with the set of complex functions on $X$, so that $\omega\in V$ and $Y=\operatorname{supp}\omega$.
In our discussions on relative $t$-designs, we will often consider the set
\begin{equation}\label{L}
	L=L_Y=\{\ell:Y\cap X_{\ell}\ne\emptyset\},
\end{equation}
and say that $(Y,\omega)$ is \emph{supported on} $\bigsqcup_{\ell\in L}X_{\ell}$.

For comparison, we begin with the algebraic definition of $t$-designs in $(X,\mathscr{R})$ due to Delsarte \cite{Delsarte1973PRRS,Delsarte1976JCTA}.
\begin{defn}
A weighted subset $(Y,\omega)$ of $X$ is called a $t$-\emph{design} in $(X,\mathscr{R}$) if $E_i\omega=0$ for $1\leqslant i\leqslant t$.
\end{defn}
\noindent
Delsarte \cite{Delsarte1977PRR} generalized this concept as follows:
\begin{defn}
A weighted subset $(Y,\omega)$ of $X$ is called a \emph{relative} $t$-\emph{design} in $(X,\mathscr{R}$) (with respect to $x$) if $E_i\omega\in\operatorname{span}\{E_i\hat{x}\}$ for $1\leqslant i\leqslant t$.
\end{defn}

\begin{rem}
Delsarte introduced the concept of $t$-designs for subsets $Y$ of $X$ in \cite{Delsarte1973PRRS}, i.e., when $\omega=\hat{Y}$, whereas in \cite{Delsarte1976JCTA,Delsarte1977PRR} he mostly considered general (i.e., not necessarily non-negative) non-zero vectors $\omega\in V$ in the discussions on $t$-designs and relative $t$-designs.
Some facts/results below, such as Examples \ref{designs in Johnson schemes} and \ref{designs in Hamming schemes}, Proposition \ref{akin to Euclidean case}, and Theorem \ref{Assmus-Mattson}, are still valid for general $\omega\in V$, but the Fisher-type lower bound on $|Y|=|\operatorname{supp}\omega|$ (cf.~Theorem \ref{Fisher}) makes sense only when $\omega$ is non-negative.
\end{rem}

For the Johnson and Hamming schemes, Delsarte \cite{Delsarte1973PRRS,Delsarte1976JCTA,Delsarte1977PRR} showed that these algebraic concepts indeed have geometric interpretations:

\begin{exam}\label{designs in Johnson schemes}
Let $(X,\mathscr{R})$ be the Johnson scheme $J(v,k)$ from Example \ref{Johnson}.
Then $(Y,\omega)$ is a $t$-design if and only if, for every $t$-subset $z$ of $\{1,2,\dots,v\}$, the sum $\lambda_z$ of the values $\omega(y)$ over those $y\in Y$ such that $z\subset y$, is a constant independent of $z$.
On the other hand, $(Y,\omega)$ is a relative $t$-design if and only if the above $\lambda_z$ depends only on $|x\cap z|$.
We note that $(Y,\hat{Y})$ is a $t$-design if and only if $Y$ is a $t$-$(v,k,\lambda)$ design (cf.~\cite[Chapter II.4]{CD2007H}) for some $\lambda$.
\end{exam}

\begin{exam}\label{designs in Hamming schemes}
Let $(X,\mathscr{R})$ be the Hamming scheme $H(n,q)$ from Example \ref{Hamming}.
Then $(Y,\omega)$ is a $t$-design if and only if, for every $t$-subset $\mathscr{T}$ of $\{1,2,\dots,n\}$ and every function $f:\mathscr{T}\rightarrow\{0,1,\dots,q-1\}$, the sum $\lambda_{\mathscr{T},f}$ of the values $\omega(y)$ over those $y=(y_1,y_2,\dots,y_n)\in Y$ such that $y_i=f(i)$ $(i\in\mathscr{T})$, is a constant independent of the pair $(\mathscr{T},f)$.
On the other hand, $(Y,\omega)$ is a relative $t$-design if and only if the above $\lambda_{\mathscr{T},f}$ depends only on $|\{i\in\mathscr{T}:x_i=f(i)\}|$, where $x=(x_1,x_2,\dots,x_n)$.
We note that $(Y,\hat{Y})$ is a $t$-design if and only if the transpose of the $|Y|\times n$ matrix formed by arranging the elements of $Y$ (in any order) is an orthogonal array $\operatorname{OA}(|Y|,n,q,t)$ (cf.~\cite[Chapter III.6]{CD2007H}).
For the case $q=2$, i.e., for $\mathcal{Q}_n$, if we choose the base vertex as $x=(0,0,\dots,0)$, then $(Y,\hat{Y})$ is a relative $t$-design if and only if $Y$ is a regular $t$-wise balanced design of type $t$-$(n,L,\lambda)$ (cf.~\cite[Section 4.4]{SHK2019B}) for some $\lambda$, where $L$ is from \eqref{L}, and where we identify the elements of $X=\{0,1\}^n$ with their supports.
\end{exam}

\noindent
Similar results hold for some other important families of $P$- and $Q$-polynomial association schemes; see, e.g., \cite{Delsarte1976JCTA,Delsarte1977PRR,Delsarte1978SIAM,Munemasa1986GC,Stanton1986GC}.

\begin{prop}[{cf.~\cite[Theorem 4.5]{BB2012JAMC}}]\label{akin to Euclidean case}
With reference to Assumption \ref{assume Q-polynomial}, let $(Y,\omega)$ be a weighted subset supported on $\bigsqcup_{\ell\in L}X_{\ell}$.
Then we have
\begin{equation}\label{projection onto primary T-module}
	\omega|_{\bm{T}\hat{x}}=\sum_{\ell\in L}\frac{\langle \omega,\hat{X}_{\ell}\rangle}{|X_{\ell}|}\hat{X}_{\ell},
\end{equation}
where $\omega|_{\bm{T}\hat{x}}$ denotes the orthogonal projection of $\omega$ on the primary $\bm{T}$-module $\bm{T}\hat{x}$.
Moreover, $(Y,\omega)$ is a relative $t$-design if and only if
\begin{equation*}
	\langle \omega,v\rangle=\langle \omega|_{\bm{T}\hat{x}},v\rangle =\sum_{\ell\in L}\frac{\langle \omega,\hat{X}_{\ell}\rangle}{|X_{\ell}|}\langle \hat{X}_{\ell},v\rangle
\end{equation*}
for every $v\in\sum_{i=0}^tE_iV$.
\end{prop}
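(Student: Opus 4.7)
The plan is to exploit the two descriptions of the primary $\bm{T}$-module given by \eqref{primary T-module}: $\bm{T}\hat{x} = \operatorname{span}\{\hat{X}_i : 0\leqslant i\leqslant n\} = \operatorname{span}\{E_i\hat{x} : 0\leqslant i\leqslant n\}$. The first description gives the projection formula, while the second connects the projection condition to the definition of a relative $t$-design.

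For \eqref{projection onto primary T-module}, I would first note that the shells $X_0,X_1,\dots,X_n$ partition $X$, so the characteristic vectors $\hat{X}_\ell$ are mutually orthogonal with $\langle\hat{X}_\ell,\hat{X}_\ell\rangle=|X_\ell|$; discarding any empty shells, they form an orthogonal basis of $\bm{T}\hat{x}$ by \eqref{primary T-module}. The standard Fourier expansion then yields
\begin{equation*}
	\omega|_{\bm{T}\hat{x}}=\sum_\ell \frac{\langle\omega,\hat{X}_\ell\rangle}{|X_\ell|}\hat{X}_\ell,
\end{equation*}
and since $\omega$ vanishes outside $\bigsqcup_{\ell\in L}X_\ell$ we have $\langle\omega,\hat{X}_\ell\rangle=\sum_{y\in X_\ell}\omega(y)=0$ whenever $\ell\notin L$, which prunes the sum to $L$.

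For the second statement, write $\omega=\omega|_{\bm{T}\hat{x}}+\omega^\perp$ with $\omega^\perp\perp\bm{T}\hat{x}$. Then the condition $\langle\omega,v\rangle=\langle\omega|_{\bm{T}\hat{x}},v\rangle$ for all $v\in\sum_{i=0}^t E_iV$ is equivalent to $\omega^\perp$ being orthogonal to $\sum_{i=0}^t E_iV$, which by self-adjointness of the $E_i$ is the same as $E_i\omega^\perp=0$ for $0\leqslant i\leqslant t$, i.e.,
\begin{equation*}
	E_i\omega=E_i(\omega|_{\bm{T}\hat{x}}) \qquad (0\leqslant i\leqslant t).
\end{equation*}
Using $\bm{T}\hat{x}=\operatorname{span}\{E_j\hat{x}\}$ together with $E_iE_j=\delta_{i,j}E_i$, the right-hand side lies in $\operatorname{span}\{E_i\hat{x}\}$, so the displayed equations are equivalent to $E_i\omega\in\operatorname{span}\{E_i\hat{x}\}$ for $0\leqslant i\leqslant t$. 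The case $i=0$ is automatic since $E_0=|X|^{-1}J$ sends every vector into $\operatorname{span}\{\hat{X}\}=\operatorname{span}\{E_0\hat{x}\}$; discarding it recovers the definition of a relative $t$-design.

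There is no real obstacle here: the argument is a direct unpacking of definitions once one recognizes that the two bases $\{\hat{X}_i\}$ and $\{E_i\hat{x}\}$ of $\bm{T}\hat{x}$ each encode one side of the claimed equivalence. The only minor care needed is the automatic vanishing of $E_i\omega^\perp$ at $i=0$ (so that $t\geqslant 1$ in the definition matches the range $0\leqslant i\leqslant t$ arising from the orthogonality condition), and the mild bookkeeping that the sum in \eqref{projection onto primary T-module} is naturally over $\ell\in L$ rather than $0\leqslant\ell\leqslant n$.
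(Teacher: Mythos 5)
Your proof is correct and follows essentially the same route as the paper: the Fourier expansion in the orthogonal basis $\{\hat{X}_i\}$ of $\bm{T}\hat{x}$ for the first part, and for the second part the equivalence $E_i\omega\in\operatorname{span}\{E_i\hat{x}\}\Leftrightarrow E_i\omega\in\bm{T}\hat{x}\Leftrightarrow E_i\omega=E_i(\omega|_{\bm{T}\hat{x}})$, which is exactly the paper's chain (your $\omega=\omega|_{\bm{T}\hat{x}}+\omega^{\perp}$ decomposition and the remark on $i=0$ just make it slightly more explicit).
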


\begin{proof}
Recall \eqref{primary T-module}.
The first part follows since the $\hat{X}_i$ form an orthogonal basis of $\bm{T}\hat{x}$ with $\|\hat{X}_i\|^2=|X_i|$.
The second part is also immediate from
\begin{equation*}
	E_i\omega\in\operatorname{span}\{E_i\hat{x}\} \ \Longleftrightarrow \ E_i\omega\in \bm{T}\hat{x} \ \Longleftrightarrow \ E_i\omega|_{\bm{T}\hat{x}}=E_i\omega. \qedhere
\end{equation*}
\end{proof}

\begin{rem}\label{remove trivial part}
It is clear that $(X_{\ell},\hat{X}_{\ell})$ is a relative $n$-design for every $0\leqslant \ell\leqslant n$.
Hence, if $(Y,\omega)$ is a relative $t$-design such that $X_{\ell}\subset Y$ for some $\ell$, and if $\omega$ is constant on $X_{\ell}$, then the weighted subset $(Y\backslash X_{\ell},(I-E_{\ell}^*)\omega)$ obtained by discarding $X_{\ell}$ from $Y$ is again a relative $t$-design.
This observation is particularly important when applying Theorem \ref{Assmus-Mattson} below; for example, we can always assume that $0\not\in L$.
\end{rem}

The following is a slight generalization of Delsarte's Assmus--Mattson theorem for $Q$-polynomial association schemes \cite[Theorem 8.4]{Delsarte1977PRR}, and can also be viewed as a variation of \cite[Theorem 3.3]{BBZ2017DCC}, which in turn generalizes \cite[Proposition 1]{Kageyama1991AC}.
See also \cite{BZ2019EJC}.
The proof is in fact identical to that of \cite[Theorem 4.3]{Tanaka2009EJC}, but we include it below because of the potential importance of the result.

\begin{thm}\label{Assmus-Mattson}
With reference to Assumption \ref{assume Q-polynomial}, let $(Y,\omega)$ be a relative $t$-design supported on $\bigsqcup_{\ell\in L}X_{\ell}$.
Then $(Y\cap X_{\ell},E_{\ell}^*\omega)$ is a relative $(t-|L|+1)$-design for every $\ell\in L$.
\end{thm}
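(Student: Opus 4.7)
The plan is to characterize the relative $(t-|L|+1)$-design property of $(Y\cap X_\ell, E_\ell^*\omega)$ via Proposition \ref{akin to Euclidean case} and reduce it to the relative $t$-design property of $(Y,\omega)$ by means of a well-chosen polynomial in the dual adjacency matrix $A_1^*$. Concretely, I would first let $p(z)$ be the Lagrange interpolation polynomial of degree $|L|-1$ determined by
\begin{equation*}
p(\theta_{\ell'}^*)=\delta_{\ell,\ell'} \qquad (\ell'\in L),
\end{equation*}
which has real coefficients since the $\theta_{\ell'}^*$ are real and distinct. Because $A_1^*=\sum_{i=0}^n\theta_i^*E_i^*$ and $\omega=\sum_{\ell'\in L}E_{\ell'}^*\omega$ by the hypothesis on $\operatorname{supp}\omega$, one immediately has
\begin{equation*}
p(A_1^*)\omega=\sum_{\ell'\in L}p(\theta_{\ell'}^*)\,E_{\ell'}^*\omega=E_\ell^*\omega,
\end{equation*}
and similarly $p(A_1^*)\hat{X}_{\ell'}=\delta_{\ell,\ell'}\hat{X}_{\ell'}$ for every $\ell'\in L$.

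Next I would bound the ``$E_i$-degree'' of $p(A_1^*)v$. By Lemma \ref{Q-property}(i) (equivalently, by \eqref{tridiagonal action}), multiplication by $A_1^*$ sends $\sum_{i=0}^rE_iV$ into $\sum_{i=0}^{r+1}E_iV$; iterating $|L|-1$ times shows that for every $v\in\sum_{i=0}^{t-|L|+1}E_iV$ we have $p(A_1^*)v\in\sum_{i=0}^tE_iV$.

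The main computation then is straightforward: for such a $v$,
\begin{equation*}
\langle E_\ell^*\omega,v\rangle=\langle p(A_1^*)\omega,v\rangle=\langle\omega,p(A_1^*)v\rangle,
\end{equation*}
since $A_1^*$ is real symmetric and $p$ has real coefficients. Applying the relative $t$-design property of $(Y,\omega)$ to $p(A_1^*)v$ through Proposition \ref{akin to Euclidean case} gives
\begin{equation*}
\langle\omega,p(A_1^*)v\rangle=\sum_{\ell'\in L}\frac{\langle\omega,\hat{X}_{\ell'}\rangle}{|X_{\ell'}|}\langle\hat{X}_{\ell'},p(A_1^*)v\rangle=\sum_{\ell'\in L}\frac{\langle\omega,\hat{X}_{\ell'}\rangle}{|X_{\ell'}|}\,\delta_{\ell,\ell'}\langle\hat{X}_{\ell'},v\rangle=\frac{\langle E_\ell^*\omega,\hat{X}_\ell\rangle}{|X_\ell|}\langle\hat{X}_\ell,v\rangle,
\end{equation*}
where the last equality uses $\langle\omega,\hat{X}_\ell\rangle=\langle E_\ell^*\omega,\hat{X}_\ell\rangle$ (as $\hat{X}_\ell=E_\ell^*\hat{X}$ and $E_\ell^*$ is a self-adjoint idempotent). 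A final appeal to Proposition \ref{akin to Euclidean case}, applied this time to $(Y\cap X_\ell,E_\ell^*\omega)$ which is supported on the single shell $X_\ell$, yields the desired relative $(t-|L|+1)$-design property.

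The only substantive point, and essentially the only place where the $Q$-polynomial hypothesis enters, is the degree bookkeeping $p(A_1^*)(\sum_{i=0}^{t-|L|+1}E_iV)\subset\sum_{i=0}^tE_iV$; everything else is a routine manipulation with adjoints. Hence I do not anticipate a serious obstacle beyond choosing the interpolating polynomial correctly and verifying these inclusions cleanly.
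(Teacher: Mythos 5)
Your proof is correct and is essentially the paper's argument in dual form: both rest on realizing $E_\ell^*\omega$ as a degree-$(|L|-1)$ polynomial in $A_1^*$ applied to $\omega$ (you via explicit Lagrange interpolation at the $\theta_{\ell'}^*$, the paper via the spanning argument \eqref{spanning set}) together with the tridiagonal action \eqref{tridiagonal action} to control the shift of $|L|-1$ in the $E_i$-grading. The only difference is bookkeeping: the paper tracks the component $\omega|_U$ in the orthogonal complement of the primary $\bm{T}$-module and shows it lands in $\sum_{i=t-|L|+2}^nE_iU$, whereas you move $p(A_1^*)$ onto the test vector $v$ and conclude via the moment characterization of Proposition \ref{akin to Euclidean case}; the two formulations are equivalent.
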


\begin{proof}
Let $U=(\bm{T}\hat{x})^{\perp}$ be the orthogonal complement of $\bm{T}\hat{x}$ in $V$, which we recall is the sum of all the non-primary irreducible $\bm{T}$-modules in $V$.
On the one hand, we have
\begin{equation*}
	\omega |_U\in \sum_{\ell\in L}E_{\ell}^*U.
\end{equation*}
Since $A_1^*$ generates $\bm{A}^*$ and has at most $|L|$ distinct eigenvalues on this subspace (cf.~\eqref{dual adjacency matrix}), it follows that
\begin{equation}\label{spanning set}
	\bm{A}^*\omega|_U = \operatorname{span}\bigl\{\omega|_U,A_1^*\omega|_U,\dots,(A_1^*)^{|L|-1}\omega|_U\bigr\}.
\end{equation}
On the other hand, since $E_0U=0$, that $(Y,\omega)$ is a relative $t$-design is rephrased as
\begin{equation*}
	\omega |_U\in \sum_{i=t+1}^n \!\! E_iU.
\end{equation*}
Hence it follows from \eqref{tridiagonal action} and \eqref{spanning set} that
\begin{equation*}
	\bm{A}^*\omega |_U\subset \sum_{k=0}^{|L|-1} (A_1^*)^k \! \sum_{i=t+1}^n \!\! E_iU \subset \sum_{i=t-|L|+2}^n \!\!\! E_iU.
\end{equation*}
In particular, for every $\ell\in L$ we have
\begin{equation*}
	E_{\ell}^*\omega |_U \in \sum_{i=t-|L|+2}^n \!\!\! E_iU.
\end{equation*}
In other words, $(Y\cap X_{\ell},E_{\ell}^*\omega)$ is a relative $(t-|L|+1)$-design, as desired.
\end{proof}

Bannai and Bannai \cite[Theorem 4.8]{BB2012JAMC} established the following Fisher-type lower bound on the size of a relative $t$-design with $t$ even:

\begin{thm}\label{Fisher}
With reference to Assumption \ref{assume Q-polynomial}, let $(Y,\omega)$ be a relative $2e$-design $(e\in\mathbb{N})$ supported on $\bigsqcup_{\ell\in L}X_{\ell}$.
Then
\begin{equation*}
	|Y|\geqslant \dim \left(\sum_{\ell\in L}E_{\ell}^*\right)\!\!\left(\sum_{i=0}^eE_iV\right).
\end{equation*}
\end{thm}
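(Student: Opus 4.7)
Write $F=\sum_{\ell\in L}E_{\ell}^{*}$ and $W=\sum_{i=0}^{e}E_{i}V$. The task is to bound $\dim FW$ from above by $|Y|$. The plan is to produce an injection from $FW$ into $\mathbb{C}^{Y}$ via the natural restriction of a function on $X$ to the subset $Y$, so that
\begin{equation*}
	\dim FW=\dim r(FW)\leqslant |Y|.
\end{equation*}
Since every vector in $FW$ is already supported on $\bigsqcup_{\ell\in L}X_{\ell}\supseteq Y$, injectivity of $r$ amounts to showing that, for $u\in W$, the condition $u(y)=0$ for all $y\in Y$ forces $u(z)=0$ for all $z\in\bigsqcup_{\ell\in L}X_{\ell}$, i.e.\ $Fu=0$.

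To achieve this, I would pass to the Hadamard square $v:=u\circ\bar{u}=|u|^{2}$. Because each $E_{i}$ is real symmetric, $W$ is closed under complex conjugation, so $u,\bar{u}\in W=\sum_{i=0}^{e}E_{i}V$. Then by the identity \eqref{Hadamard of flags} applied with $h=k=e$, we obtain
\begin{equation*}
	v\in\operatorname{span}(W\circ W)=\sum_{i=0}^{2e}E_{i}V.
\end{equation*}
This is the one place where we really exploit that the design has strength $2e$ rather than just $e$, and it is the mechanism through which $W$ and its ``dual'' on the design side interact.

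Next I would invoke the relative $2e$-design property through the second part of Proposition~\ref{akin to Euclidean case}: since $v\in\sum_{i=0}^{2e}E_{i}V$,
\begin{equation*}
	\langle \omega,v\rangle =\sum_{\ell\in L}\frac{\langle \omega,\hat{X}_{\ell}\rangle}{|X_{\ell}|}\langle \hat{X}_{\ell},v\rangle.
\end{equation*}
The left-hand side equals $\sum_{y\in Y}\omega(y)|u(y)|^{2}=0$ by the assumption $u|_{Y}=0$. On the right-hand side, $\omega$ is strictly positive on $Y$ and $\ell\in L$ means $Y\cap X_{\ell}\neq\emptyset$, so $\langle \omega,\hat{X}_{\ell}\rangle>0$ for every $\ell\in L$; meanwhile $\langle \hat{X}_{\ell},v\rangle=\sum_{z\in X_{\ell}}|u(z)|^{2}\geqslant 0$. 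A sum of non-negative terms is zero only if each term vanishes, hence $u(z)=0$ for every $z\in\bigsqcup_{\ell\in L}X_{\ell}$, which is exactly $Fu=0$.

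The main (and really only) obstacle is identifying the right auxiliary vector and the right quadratic form. Once one sees that $u\circ\bar{u}$ lands inside the test space $\sum_{i=0}^{2e}E_{i}V$ of the relative $2e$-design and that the positivity of $\omega$ combined with the non-negativity of $|u|^{2}$ forces term-by-term vanishing in the identity of Proposition~\ref{akin to Euclidean case}, everything else is a routine linear-algebra count. No delicate estimates are needed beyond this.
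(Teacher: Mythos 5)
Your proof is correct: the passage to $u\circ\bar{u}\in\sum_{i=0}^{2e}E_iV$ via \eqref{Hadamard of flags}, followed by the design identity of Proposition \ref{akin to Euclidean case} and the positivity of $\omega$ on $Y$, does force $E_\ell^*u=0$ for every $\ell\in L$, so the restriction map $FW\to\mathbb{C}^Y$ is injective. The paper itself does not reprove this theorem (it cites \cite[Theorem 4.8]{BB2012JAMC}), but your argument is exactly the standard one behind that reference and is the same mechanism the paper reuses later (e.g.\ in the proof of Theorem \ref{constant on fiber} and in the Gram-matrix computations of Section \ref{sec: main result}), so no further comment is needed.
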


\begin{defn}
A relative $2e$-design $(Y,\omega)$ is called \emph{tight} if equality holds above.
\end{defn}

Recall from Example \ref{designs in Hamming schemes} that the relative $t$-designs in the hypercubes are equivalent to the weighted regular $t$-wise balanced designs.

\begin{exam}
Let $(X,\mathscr{R})$ be the $n$-cube $\mathcal{Q}_n$ from Example \ref{Hamming}.
Xiang \cite{Xiang2012JCTA} showed that if $e\leqslant\ell\leqslant n-e$ for every $\ell\in L$, then
\begin{equation}\label{explicit lower bound}
	\dim \left(\sum_{\ell\in L}E_{\ell}^*\right)\!\!\left(\sum_{i=0}^eE_iV\right)=\sum_{i=0}^{\min\{|L|-1,e\}} \!\! \binom{n}{e-i}.
\end{equation}
We may remark that (cf.~\cite[Theorem 9.2.1]{BCN1989B})
\begin{equation}\label{multiplicities}
	\dim E_iV=\binom{n}{i} \qquad (0\leqslant i\leqslant n).
\end{equation}
See also \cite{LBB2014GC} and \cite[Theorem 2.7, Example 2.9]{BBST2015EJC}.
\end{exam}

\begin{exam}\label{tight relative 2-designs}
Consider a symmetric $2$-$(n+1,k,\lambda)$ design (cf.~\cite[Chapter II.6]{CD2007H}).
Observe that removing a point yields a tight relative $2$-design in $\mathcal{Q}_n$ with $L=\{k-1,k\}$.
Likewise, taking the complement of every block which contains a given point followed by removing that point gives rise to a tight relative $2$-design in $\mathcal{Q}_n$ with $L=\{k,n+1-k\}$.
The complement of this is yet another example\footnote{It seems that this construction is missing in \cite[Theorem 8]{Woodall1970PLMS}.} such that $L=\{k-1,n-k\}$.
See \cite[Section 3]{LBB2014GC} and \cite[Theorem 8]{Woodall1970PLMS}.
Note that the weights are constant for these three examples.
On the other hand, Bannai, Bannai, and Bannai \cite[Theorem 2.2]{BBB2014DM} showed that there is a tight relative $2$-design in $\mathcal{Q}_n$ with $L=\{2,n/2\}$ for $n\equiv 6 \, (\mathrm{mod} \, 8)$, provided that a  Hadamard matrix of order $n/2+1$ exists.
This construction provides examples in which the weights take two distinct values depending on the shells.
See also \cite{YHG2015DM}.
\end{exam}

\begin{exam}\label{tight relative 4-designs}
Working with the tight $4$-$(23,7,1)$ and $4$-$(23,16,52)$ designs instead of a symmetric $2$-$(n+1,k,\lambda)$ design as in Example \ref{tight relative 2-designs}, we obtain four tight relative $4$-designs in $\mathcal{Q}_{22}$ with constant weight such that
\begin{equation*}
	L\in\bigl\{\{6,7\},\{6,15\},\{7,16\},\{15,16\}\bigr\}.
\end{equation*}
See \cite[Theorem 6.3]{BBZ2017DCC} and \cite[Section 3]{LBB2014GC}.
\end{exam}

Let $(Y,\omega)$ be a tight relative $2e$-design supported on $\bigsqcup_{\ell\in L}X_{\ell}$.
Bannai, Bannai, and Bannai \cite[Theorem 2.1]{BBB2014DM} showed that if the stabilizer of $x$ in the automorphism group of $(X,\mathscr{R})$ acts transitively on each of the shells $X_i$ then $\omega$ is constant on $Y\cap X_{\ell}$ for every $\ell\in L$.
The next theorem generalizes this result by replacing group actions by combinatorial regularity.
Observe that the fibers of the coherent closure of $\bm{T}$ are in general finer than the shells $X_i$.

\begin{thm}\label{constant on fiber}
With reference to Assumption \ref{assume Q-polynomial}, let $(Y,\omega)$ be a tight relative $2e$-design $(e\in\mathbb{N})$ supported on $\bigsqcup_{\ell\in L}X_{\ell}$.
For every $\ell\in L$, the weight $\omega$ is constant on $Y\cap X_{\ell}$ provided that $X_{\ell}$ remains a fiber of the coherent closure of $\bm{T}$.
\end{thm}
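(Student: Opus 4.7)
The strategy is to adapt the uniqueness-via-averaging argument of Bannai, Bannai, and Bannai \cite{BBB2014DM} (which handled the case $e=1$ under the Schurity hypothesis that the stabilizer of $x$ acts transitively on each shell) by replacing the group action by the coherent-algebraic regularity supplied by the fiber hypothesis on $X_{\ell_0}$. The main step is to show that a tight relative $2e$-design on $Y$ is \emph{uniquely determined} by its shell averages, so that averaging $\omega$ over $Y\cap X_{\ell_0}$ yields the same weight, forcing $\omega$ to already be constant there.

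First I would revisit the proof of Theorem~\ref{Fisher} to extract the explicit form of the $\omega$-weighted bilinear form on $W:=\bigl(\sum_{\ell\in L}E^*_\ell\bigr)\bigl(\sum_{i=0}^eE_iV\bigr)$. For $u,v\in W$, write $u=\sum_\ell E^*_\ell u'$ and $v=\sum_\ell E^*_\ell v'$ with $u',v'\in\sum_{i=0}^eE_iV$. Because the $E^*_\ell$ are mutually orthogonal diagonal idempotents, $u\circ\bar v=\sum_\ell E^*_\ell(u'\circ\bar{v'})$, and $u'\circ\bar{v'}\in\sum_{i=0}^{2e}E_iV$ by \eqref{Hadamard of flags}. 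Combined with Proposition~\ref{akin to Euclidean case} this yields
\[
\sum_{y\in Y}\omega(y)u(y)\overline{v(y)} \;=\; \sum_{\ell\in L}c_\ell\,\langle E^*_\ell u,E^*_\ell v\rangle, \qquad c_\ell := \frac{\langle\omega,\hat X_\ell\rangle}{|X_\ell|}.
\]
Tightness means the restriction map $\Psi\colon W\to\mathbb C^Y$, $v\mapsto v|_Y$, is a bijection; taking the preimages $v_y:=\Psi^{-1}(\delta_y)$ of the standard basis and substituting $u=v_y$, $v=v_{y'}$ in the identity gives
\[
\omega(y)\delta_{y,y'} \;=\; \sum_{\ell\in L}c_\ell\,\langle E^*_\ell v_y, E^*_\ell v_{y'}\rangle,
\]
whose right-hand side depends only on $Y$ and on the shell averages $c_\ell$. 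Hence a tight relative $2e$-design on the fixed support $Y$ is uniquely determined by its shell averages.

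Next, for $\ell_0\in L$ with $X_{\ell_0}$ a fiber of the coherent closure $\bm T'$ of $\bm T$, I would define the shell-averaged weight
\[
\omega'(z) := \begin{cases} |Y\cap X_{\ell_0}|^{-1}\sum_{y\in Y\cap X_{\ell_0}}\omega(y) & z\in Y\cap X_{\ell_0},\\[1mm] \omega(z) & z\in Y\setminus X_{\ell_0},\\[1mm] 0 & z\notin Y.\end{cases}
\]
Then $\operatorname{supp}\omega'=Y$ and $\omega'$ has the same shell averages $c_\ell$ as $\omega$. Once we verify that $(Y,\omega')$ is itself a relative $2e$-design, it is tight by counting, and the uniqueness above forces $\omega'=\omega$, giving the conclusion. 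Setting $\delta:=\omega'-\omega$, which is supported inside $X_{\ell_0}$ with $\sum_z\delta(z)=0$ and satisfies $\langle\delta,\hat X_\ell\rangle=0$ for every $\ell$, we have $\delta\perp\bm T\hat x$, so the condition $E_i\omega'\in\operatorname{span}\{E_i\hat x\}$ $(1\le i\le 2e)$ reduces to $E_i\delta=0$ for $1\le i\le 2e$, equivalently $\delta\in\sum_{i>2e}E_iV$.

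The main obstacle is this last reduction, which is exactly where the coherent-closure hypothesis enters. Since $X_{\ell_0}$ is a fiber of $\bm T'$, the restricted algebra $E^*_{\ell_0}\bm T'E^*_{\ell_0}$ is a homogeneous coherent algebra on $X_{\ell_0}$, whose unique trivial central idempotent is $|X_{\ell_0}|^{-1}\hat X_{\ell_0}\hat X_{\ell_0}^{\dagger}\in\bm T'$; the "difference-from-$X_{\ell_0}$-average" on functions supported in $X_{\ell_0}$ is precisely the projection onto its orthogonal complement inside $\bm T'$. I would combine this observation with the relative-design orthogonality relations enjoyed by $\omega$ to show that subtracting its $(Y\cap X_{\ell_0})$-average from $E^*_{\ell_0}\omega$ produces a vector orthogonal to every $E_iV$ with $1\le i\le 2e$. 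In the Schurian setting of \cite{BBB2014DM} this step is an averaging of $\omega$ over the transitive stabilizer action on $X_{\ell_0}$; the present formulation carries out the analogous averaging intrinsically inside $\bm T'$, using the fiber property as the substitute for transitivity.
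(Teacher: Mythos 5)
Your first half is correct and is essentially the computation the paper itself performs: the identity $\sum_{y\in Y}\omega(y)u(y)\overline{v(y)}=\sum_{\ell\in L}c_\ell\langle E_\ell^*u,E_\ell^*v\rangle$ for $u,v\in W$ does follow from \eqref{Hadamard of flags} and Proposition \ref{akin to Euclidean case}, tightness together with positivity of $\omega$ and of the $c_\ell$ makes the restriction $W\to\mathbb{C}^Y$ bijective, and the resulting formula $\omega(y)=\sum_{\ell}c_\ell\|E_\ell^*v_y\|^2$ correctly shows that the weight of a tight design is determined by $Y$ and the shell sums. The gap is in the second half. Showing that $(Y,\omega')$ is again a relative $2e$-design amounts to showing $E_i(\omega'-\omega)=0$ for $1\leqslant i\leqslant 2e$; but, given your uniqueness statement, this is logically \emph{equivalent} to the assertion that $\omega$ is constant on $Y\cap X_{\ell_0}$, so nothing has been reduced, and the mechanism you sketch cannot close it. The fiber hypothesis places the operator $E_{\ell_0}^*-|X_{\ell_0}|^{-1}\hat{X}_{\ell_0}\hat{X}_{\ell_0}^{\dagger}$ (subtraction of the $X_{\ell_0}$-average) inside the coherent closure of $\bm{T}$, but your $\delta=\omega'-\omega$ is produced by subtracting the $(Y\cap X_{\ell_0})$-average: the relevant operator is $|Z|^{-1}\hat{Z}\hat{Z}^{\dagger}$ with $Z=Y\cap X_{\ell_0}$, which involves $Y$ and does not lie in the coherent closure of $\bm{T}$ (which knows nothing about $Y$). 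Nor is there an a priori reason for $\delta$, a mean-zero vector supported on $Y\cap X_{\ell_0}$, to lie in $\sum_{i>2e}E_iV$: Theorem \ref{Assmus-Mattson} only gives $(Y\cap X_{\ell_0},E_{\ell_0}^*\omega)$ strength $2e-|L|+1$, and gives no control over $\hat{Z}$ itself.

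What is missing is a way to read $\omega(y)$ off the diagonal of a single matrix that lies in $\bm{T}$, and your $v_y$ are precisely the right ingredients for this. Put $\tilde{D}=\sum_{\ell\in L}c_\ell E_\ell^*\in\bm{T}$ and let $F$ be the orthogonal projection onto $BV$, where $B=\sqrt{\!\tilde{D}}\sum_{i=0}^eE_i\in\bm{T}$; then $F$ is a polynomial in $BB^{\dagger}$ and hence lies in $\bm{T}$. Your bilinear identity shows that for $y\in Y\cap X_\ell$ one has $F\hat{y}=\sqrt{c_\ell}\,\omega(y)^{-1}\sqrt{\!\tilde{D}}\,v_y$, so that $F|_{Y\times Y}$ is diagonal with $F_{y,y}=c_\ell/\omega(y)$. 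Since $F$ belongs to the coherent closure of $\bm{T}$, its diagonal is constant on each fiber, and the hypothesis that $X_{\ell_0}$ is a fiber immediately forces $\omega$ to be constant on $Y\cap X_{\ell_0}$. Replacing your averaging step by this observation turns your write-up into (a mild repackaging of) the paper's proof.
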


\begin{proof}
Let (cf.~\eqref{projection onto primary T-module})
\begin{equation*}
	D=\operatorname{diag}\omega, \qquad \tilde{D}=\operatorname{diag}\omega|_{\bm{T}\hat{x}}=\sum_{\ell\in L}\frac{\langle \omega,\hat{X}_{\ell}\rangle}{|\hat{X}_{\ell}|}E_{\ell}^*.
\end{equation*}
Note that $\tilde{D}\in\bm{T}$.
Let $F$ be the orthogonal projection onto $BV$, where
\begin{equation*}
	B=\sqrt{\!\tilde{D}}\, \sum_{i=0}^eE_i \in\bm{T}.
\end{equation*}
Observe that
\begin{equation*}
	BV=(BB^{\dagger})V,
\end{equation*}
and that $F$ is written as a polynomial in the Hermitian (in fact, real and symmetric) matrix $BB^{\dagger}$.
In particular, $F\in\bm{T}$.

Since $(Y,\omega)$ is tight, we have
\begin{equation*}
	\dim BV=\dim \sqrt{\!\tilde{D}}\!\, \left(\sum_{\ell\in L}E_{\ell}^*\right)\!\!\left(\sum_{i=0}^eE_iV\right)=|Y|.
\end{equation*}
Let $u_1,u_2,\dots,u_{|Y|}$ be an orthonormal basis of $BV$, and let
\begin{equation*}
	G=\left[\begin{array}{c|c|c|c} \!\! u_1\!\! & u_2\!\! & \cdots & \!u_{|Y|}\!\! \end{array}\right].
\end{equation*}
Then we have
\begin{equation}\label{F=GGdagger}
	F=GG^{\dagger}.
\end{equation}

Let
\begin{equation*}
	D'=D|_{Y\times Y}, \quad \tilde{D}'=\tilde{D}|_{Y\times Y}, \quad F'=F|_{Y\times Y}, \quad G'=G|_{Y\times \{1,2,\dots,|Y|\}},
\end{equation*}
where $|_{Y\times Y}$ etc.~mean taking corresponding submatrices.
Note that these are square matrices, and that $D'$ and $\tilde{D}'$ are invertible.
Then it follows that
\begin{equation}\label{get unitary matrix}
	(G')^{\dagger}D'(\tilde{D}')^{-1}G'=I_{|Y|}.
\end{equation}
Indeed, since we may write
\begin{equation*}
	u_i=\sqrt{\!\tilde{D}}\, v_i, \quad \text{where} \ \ v_i\in \sum_{r=0}^eE_rV \qquad (1\leqslant i\leqslant |Y|),
\end{equation*}
it follows from \eqref{Hadamard of flags} (applied to $h=k=e$) and Proposition \ref{akin to Euclidean case} that the $(i,j)$-entry of the LHS in \eqref{get unitary matrix} is equal to
\begin{equation*}
	(v_i)^{\dagger} D v_j = \langle \omega,v_i \circ \overline{v_j} \rangle = \langle \omega|_{\bm{T}\hat{x}},v_i \circ \overline{v_j} \rangle = (v_i)^{\dagger} \tilde{D} v_j =\langle u_j,u_i \rangle =\delta_{i,j},
\end{equation*}
where $\overline{\rule{0ex}{1ex} \ \, }$ means complex conjugate.
By \eqref{F=GGdagger} and \eqref{get unitary matrix}, we have
\begin{equation*}
	I_{|Y|}=D'(\tilde{D}')^{-1}G'(G')^{\dagger}=D'(\tilde{D}')^{-1}F',
\end{equation*}
so that
\begin{equation}\label{compare diagonals}
	(D')^{-1}=(\tilde{D}')^{-1}F'.
\end{equation}
In particular, $F'$ is a diagonal matrix.

Now, let $\ell\in L$ and suppose that $X_{\ell}$ remains a fiber of the coherent closure of $\bm{T}$.
Then the $(y,y)$-entry of $F\in\bm{T}$ is constant for $y\in X_{\ell}$ (cf.~(A\ref{A1}) and (A\ref{A2})), and the same is true for $\tilde{D}$.
Hence it follows from \eqref{compare diagonals} that $\omega(y)=D_{y,y}$ must be constant for $y\in Y\cap X_{\ell}$.
This completes the proof.
\end{proof}

\section{The Terwilliger algebra of \texorpdfstring{$\mathcal{Q}_n$}{Qn}}\label{sec: Terwilliger algebra of hypercube}

For the rest of this paper, we will focus on relative $t$-designs in the $n$-cube $\mathcal{Q}_n$ from Example \ref{Hamming}.
We will need detailed descriptions of the Terwilliger algebra of $\mathcal{Q}_n$ and its irreducible modules, and we collect these in this section.
Thus, we assume that $(X,\mathscr{R})=\mathcal{Q}_n$, where $X=\{0,1\}^n$.
We again fix a base vertex $x\in X$, and write $E_i^*=E_i^*(x)$ $(0\leqslant i\leqslant n)$, $A_1^*=A_1^*(x)$, and $\bm{T}=\bm{T}(x)$.
The $Q$-polynomial ordering we consider is the one given in Example \ref{Hamming}.\footnote{If $n$ is even then $\mathcal{Q}_n$ has another $Q$-polynomial ordering $E_0,E_{n-1},E_2,E_{n-3},\dots$ in terms of the natural ordering; cf.~\cite[p.~305]{BI1984B}.}

\begin{prop}[{cf.~\cite[Section I.C]{Schrijver2005IEEE}}]\label{triple regularity}
We have
\begin{equation}\label{triply regular}
	\bm{T}=\operatorname{span}\{E_i^*A_jE_k^*:0\leqslant i,j,k\leqslant n\}.
\end{equation}
In particular, $\bm{T}$ is a coherent algebra.
\end{prop}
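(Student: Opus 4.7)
The inclusion $\operatorname{span}\{E_i^*A_jE_k^*\}\subseteq\bm{T}$ is immediate from $E_i^*,A_j\in\bm{T}$; for the reverse inclusion my plan is to show that the right-hand side is already a $^{\dagger}$-closed subalgebra of $M_X(\mathbb{C})$ containing both $\bm{A}$ and $\bm{A}^*$, and hence containing $\bm{T}$. Closure under $^{\dagger}$ follows from $(E_i^*A_jE_k^*)^{\dagger}=E_k^*A_jE_i^*$, while $A_j=\sum_{i,k}E_i^*A_jE_k^*$ and $E_i^*=E_i^*A_0E_i^*$ give the containments of $\bm{A}$ and $\bm{A}^*$. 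Because
\begin{equation*}
    (E_i^*A_jE_k^*)(E_{k'}^*A_{j'}E_l^*)=\delta_{k,k'}\,E_i^*(A_jE_k^*A_{j'})E_l^*,
\end{equation*}
closure under matrix multiplication reduces to verifying that $E_i^*(A_jE_k^*A_{j'})E_l^*\in\operatorname{span}\{E_i^*A_hE_l^*:0\leqslant h\leqslant n\}$ for every choice of indices.

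This is the \emph{triple regularity} of $\mathcal{Q}_n$. The stabilizer of $x$ in $\operatorname{Aut}(\mathcal{Q}_n)=\mathfrak{S}_2\wr\mathfrak{S}_n$ is the subgroup $\mathfrak{S}_n$ acting by coordinate permutations, and each of $A_j$, $E_k^*$, $A_{j'}$ is preserved by this subgroup, so their product lies in the centralizer algebra of $\mathfrak{S}_n$. Classifying each coordinate $c$ by the pattern $(y_c,z_c)\in\{0,1\}^2$ partitions $\{1,\dots,n\}$ into four blocks whose sizes $n_{00},n_{01},n_{10},n_{11}$ are uniquely determined by the triple $(d(x,y),d(x,z),d(y,z))$ via
\begin{equation*}
    n_{10}+n_{11}=d(x,y),\qquad n_{01}+n_{11}=d(x,z),\qquad n_{01}+n_{10}=d(y,z),
\end{equation*}
together with $n_{00}+n_{01}+n_{10}+n_{11}=n$. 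Hence $\mathfrak{S}_n$ acts transitively on pairs $(y,z)$ with any prescribed triple of pairwise distances from $x$, so $(A_jE_k^*A_{j'})_{y,z}$ depends on $(y,z)$ only through this triple. Restricting to $y\in X_i$, $z\in X_l$, the entry depends only on $d(y,z)$, which yields the sought expansion $E_i^*(A_jE_k^*A_{j'})E_l^*=\sum_h c_h\, E_i^*A_hE_l^*$.

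For the second assertion I would observe that each $E_i^*A_jE_k^*$ is a $0$--$1$ matrix and that the triple $(d(x,y),d(y,z),d(x,z))$ is determined by $(y,z)\in X^2$, so the nonzero matrices in the family $\{E_i^*A_jE_k^*\}$ have pairwise disjoint supports and satisfy $\sum_{i,j,k}E_i^*A_jE_k^*=J$. They thus form the Schur-primitive idempotents of a partition of unity in $\bm{T}$, which forces $\bm{T}$ to be closed under $\circ$; combined with $J\in\bm{T}$ and the semisimplicity-based closure of $\bm{T}$ under $^{\dagger}$, this is precisely the coherent algebra property. The only non-routine step is the triple regularity, but for $\mathcal{Q}_n$ it reduces to the elementary bookkeeping above.
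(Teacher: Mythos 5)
Your proof is correct and rests on the same key fact as the paper's: the orbitals of the stabilizer $\mathfrak{G}_x\cong\mathfrak{S}_n$ on $X^2$ are exactly the sets cut out by the distance triples $(d(x,y),d(y,z),d(z,x))$, so that $\operatorname{span}\{E_i^*A_jE_k^*\}$ is the centralizer algebra of $\mathfrak{G}_x$. The only difference is presentational: the paper simply invokes the general fact that centralizer algebras are coherent algebras, whereas you verify closure under multiplication, $\circ$, and $^{\dagger}$ by hand via the $n_{00},n_{01},n_{10},n_{11}$ bookkeeping (which, note, should be taken relative to $x$, i.e., classifying coordinates by whether $y_c=x_c$ and $z_c=x_c$, or after normalizing $x=(0,\dots,0)$).
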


\begin{proof}
The RHS in \eqref{triply regular} is a subspace of $\bm{T}$.
Recall from Example \ref{Hamming} that $\mathcal{Q}_n$ admits the action of $\mathfrak{G}=\mathfrak{S}_2\wr\mathfrak{S}_n$.
The stabilizer $\mathfrak{G}_x$ of $x$ in $\mathfrak{G}$ is isomorphic to $\mathfrak{S}_n$, and it is immediate to see that every orbital of $\mathfrak{G}_x$ is of the form
\begin{equation*}
	\{(y,z)\in X\times X:(x,y)\in R_i,\, (y,z)\in R_j,\, (z,x)\in R_k\}
\end{equation*}
for some $i,j$, and $k$, where the corresponding adjacency matrix is $E_i^*A_jE_k^*$.
Hence the RHS in \eqref{triply regular} agrees with the centralizer algebra for the action of $\mathfrak{G}_x$ on $X$, which is a coherent algebra; cf.~Remark \ref{centralizer algebra}.
Since $\bm{T}$ is generated by the $A_i$ and the $E_i^*$, the result follows.
\end{proof}

\begin{lem}\label{nonzero triple product}
For $0\leqslant i,j,k\leqslant n$, we have $E_i^*A_jE_k^*\ne O$ if and only if
\begin{equation*}
	j\in\bigl\{|i-k|,|i-k|+2,|i-k|+4,\dots,\min\{i+k,2n-i-k\}\bigr\}.
\end{equation*}
\end{lem}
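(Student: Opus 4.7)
The plan is to identify the $(y,z)$-entries of $E_i^*A_jE_k^*$ combinatorially and then count the admissible distance parameters. Since the automorphism group $\mathfrak{G}=\mathfrak{S}_2\wr\mathfrak{S}_n$ of $\mathcal{Q}_n$ acts transitively on $X$, I may assume without loss of generality that $x=(0,0,\dots,0)$, and then identify each vertex of $X$ with its support, viewed as a subset of $\{1,2,\dots,n\}$.

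Under this identification, for $y,z\in X$ with supports $Y$ and $Z$, we have $d(x,y)=|Y|$, $d(x,z)=|Z|$, and $d(y,z)=|Y\triangle Z|$. From the definitions of $E_i^*$ and $A_j$ it follows that
\begin{equation*}
	(E_i^*A_jE_k^*)_{y,z}=\begin{cases} 1 & \text{if} \ |Y|=i,\, |Z|=k,\, |Y\triangle Z|=j, \\ 0 & \text{otherwise}, \end{cases}
\end{equation*}
so $E_i^*A_jE_k^*\ne O$ if and only if there exist subsets $Y,Z\subseteq\{1,2,\dots,n\}$ with $|Y|=i$, $|Z|=k$, and $|Y\triangle Z|=j$.

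The next step is to parametrize such pairs by $m=|Y\cap Z|$. Since $|Y\triangle Z|=|Y|+|Z|-2|Y\cap Z|$, one obtains $j=i+k-2m$; in particular $j$ and $i+k$ (equivalently $j$ and $|i-k|$) share parity. The constraints $Y\cap Z\subseteq Y,Z$ and $Y\cup Z\subseteq\{1,2,\dots,n\}$ translate to
\begin{equation*}
	\max\{0,i+k-n\}\leqslant m \leqslant \min\{i,k\},
\end{equation*}
and conversely any integer $m$ in this range is realized by some pair $(Y,Z)$ (e.g.~take $Y\cap Z=\{1,\dots,m\}$, $Y\setminus Z=\{m+1,\dots,i\}$, $Z\setminus Y=\{i+1,\dots,i+k-m\}$, which fits inside $\{1,\dots,n\}$ precisely because $m\geqslant i+k-n$). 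Substituting the endpoints of this range for $m$ into $j=i+k-2m$ yields
\begin{equation*}
	|i-k|\leqslant j\leqslant \min\{i+k,\,2n-i-k\},
\end{equation*}
with $j\equiv|i-k|\pmod{2}$, which is exactly the stated set of admissible values of $j$.

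No step presents a serious obstacle; the only mild care needed is in verifying that the upper bound $\min\{i+k,2n-i-k\}$ arises correctly from the case analysis $i+k\leqslant n$ versus $i+k>n$ in $\max\{0,i+k-n\}$, and that every integer $m$ in the allowed range really is attained by a legitimate pair of subsets, which is immediate in the hypercube.
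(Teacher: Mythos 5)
Your proof is correct and is precisely the routine combinatorial verification the authors had in mind (the paper's own proof is just the word ``Routine''): identifying vertices with their supports, parametrizing pairs by $m=|Y\cap Z|$, and reading off the admissible values $j=i+k-2m$ for $\max\{0,i+k-n\}\leqslant m\leqslant\min\{i,k\}$ gives exactly the stated set. No gaps.
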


\begin{proof}
Routine.
\end{proof}

Next we recall basic facts about the irreducible $\bm{T}$-modules.
Let $W$ be an irreducible $\bm{T}$-module.
We define the \emph{support} $W_s$, the \emph{endpoint} $r(W)$, and the \emph{diameter} $d(W)$ of $W$ by
\begin{equation*}
	W_s=\{i:E_i^*W\ne 0\}, \qquad r(W)=\min W_s, \qquad d(W)=|W_s|-1,
\end{equation*}
respectively.
We call $W$ \emph{thin} if $\dim E_i^*W\leqslant 1$ $(0\leqslant i\leqslant n)$.

\begin{thm}[{cf.~\cite{Go2002EJC}}]\label{irreducible T-modules}
Let $W$ be an irreducible $\bm{T}$-module and set $r=r(W)$, $r^*=r^*(W)$, $d=d(W)$, and $d^*=d^*(W)$.
Then $W$ is thin, dual thin, and we have
\begin{equation*}
	r=r^*, \qquad d=d^*=n-2r, \qquad W_s=W_s^*=\{r,r+1,\dots,n-r\}.
\end{equation*}
Moreover, the isomorphism class of $W$ is determined by $r$.
\end{thm}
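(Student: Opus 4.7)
The plan is to exploit the tensor product structure $V=\mathbb{C}^X\cong(\mathbb{C}^2)^{\otimes n}$ via the identification $y=(y_1,\ldots,y_n)\mapsto e_{y_1}\otimes\cdots\otimes e_{y_n}$, and to realize $\bm{T}$ through $\mathfrak{sl}_2$ acting diagonally on the tensor factors. By Proposition~\ref{triple regularity} together with Remark~\ref{centralizer algebra}, $\bm{T}$ coincides with the centralizer of the stabilizer $\mathfrak{G}_x\cong\mathfrak{S}_n$, which acts by permuting tensor factors; so Schur--Weyl duality is the natural tool.

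Take $x=(0,\ldots,0)$ without loss of generality. With $\sigma_z=\operatorname{diag}(1,-1)$, $\sigma_+=\bigl(\begin{smallmatrix}0&1\\0&0\end{smallmatrix}\bigr)$, $\sigma_-=\bigl(\begin{smallmatrix}0&0\\1&0\end{smallmatrix}\bigr)$, set
\[ H=\sum_{j=1}^n\sigma_z^{(j)},\quad L=\sum_{j=1}^n\sigma_+^{(j)},\quad R=\sum_{j=1}^n\sigma_-^{(j)}, \]
where $\sigma_\bullet^{(j)}$ denotes action on the $j^{\mathrm{th}}$ factor. One verifies $H=A_1^*$ and $L+R=A_1$, while $[H,A_1]=2(R-L)$ separates $L$ and $R$ inside the algebra generated by $A_1$ and $A_1^*$; hence $H,L,R\in\bm{T}$ and satisfy the $\mathfrak{sl}_2$-relations $[L,R]=H$, $[H,L]=2L$, $[H,R]=-2R$. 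Since all three operators commute with $\mathfrak{S}_n$, classical Schur--Weyl duality gives
\[ (\mathbb{C}^2)^{\otimes n}\;\cong\;\bigoplus_{r=0}^{\lfloor n/2\rfloor} V_{n-2r}\otimes S^{(n-r,r)} \]
as an $\mathfrak{sl}_2\times\mathfrak{S}_n$-module, where $V_m$ denotes the unique irreducible $\mathfrak{sl}_2$-module of dimension $m+1$ and $S^{(n-r,r)}$ the corresponding Specht module. In particular, every irreducible $\bm{T}$-module $W$ is isomorphic to some $V_{n-2r}$ with $r\in\{0,1,\ldots,\lfloor n/2\rfloor\}$, so its isomorphism class is determined by $r$.

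The remaining assertions reduce to standard $\mathfrak{sl}_2$ weight theory on $V_{n-2r}$. Its $H$-spectrum is $\{n-2r,n-2r-2,\ldots,-(n-2r)\}$, each weight space being one-dimensional; since $H=A_1^*=\sum_i(n-2i)E_i^*$, one obtains $\dim E_i^*W=1$ for $i\in\{r,r+1,\ldots,n-r\}$ and $E_i^*W=0$ otherwise, proving $r(W)=r$, $d(W)=n-2r$, $W_s=\{r,\ldots,n-r\}$, and the thinness of $W$. Conjugation by the diagonal $SL_2$-action takes $H$ to $L+R=A_1$, so $A_1$ has the same spectrum on $V_{n-2r}$ with simple eigenvalues; combined with $A_1=\sum_i(n-2i)E_i$, this yields $W_s^*=W_s$, $r^*(W)=r$, $d^*(W)=n-2r$, and the dual thinness of $W$.

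The main technical point requiring care is the identification of $\bm{T}$ with the image of $U(\mathfrak{sl}_2)$ in $\operatorname{End}(V)$: the inclusion $\supseteq$ is the explicit construction above, while $\subseteq$ rests on Schur--Weyl (equivalently, the double commutant theorem applied to $\mathfrak{S}_n$ on $(\mathbb{C}^2)^{\otimes n}$). Once this identification is in hand, everything else is a direct translation between $\mathfrak{sl}_2$-weight data and the subspaces $E_i^*V$, $E_iV$; one could alternatively sidestep Schur--Weyl entirely by applying Lemma~\ref{Q-property}(iv) and its dual version to show that a lowest weight vector of $W$ generates $W$ under $U(\mathfrak{sl}_2)$, and then invoking the classification of $\mathfrak{sl}_2$-irreducibles directly.
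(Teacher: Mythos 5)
Your argument is correct. Note that the paper does not actually prove Theorem \ref{irreducible T-modules}: it is quoted from Go's paper on the Terwilliger algebra of the hypercube, and the remark immediately following it only records the two surjections $U(\mathfrak{sl}_2(\mathbb{C}))\rightarrow\bm{T}$ that your operators $L,R,H$ realize. Go's own proof runs along the lines of your closing alternative: one works directly with the raising/lowering operators and the tridiagonal relations (Lemma \ref{Q-property} and its dual version) to show that each irreducible $\bm{T}$-module is generated from a lowest-weight vector and is irreducible as an $\mathfrak{sl}_2(\mathbb{C})$-module, and then reads off the weight structure. Your Schur--Weyl packaging is a legitimate and arguably cleaner shortcut: it delivers the classification, the thin/dual-thin statements, and even the multiplicities $|\Lambda_r|=\dim S^{(n-r,r)}=\binom{n}{r}-\binom{n}{r-1}$ in one stroke, at the price of the double commutant theorem to identify $\bm{T}$ with the image of $U(\mathfrak{sl}_2(\mathbb{C}))$ (which, as you observe, meshes with Proposition \ref{triple regularity} and Remark \ref{centralizer algebra}). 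Two small points: the commutator is $[H,A_1]=[A_1^*,L+R]=2(L-R)$ rather than $2(R-L)$ --- harmless, since either sign puts $L$ and $R$ in $\bm{T}$; and in the conjugation step it is worth noting that $g^{\otimes n}$ commutes with the permutation action of $\mathfrak{S}_n$ and hence lies in $\bm{T}$ itself, so it automatically maps every irreducible $\bm{T}$-submodule $W$ to itself, which is exactly what makes $A_1|_W$ conjugate to $A_1^*|_W$ and completes the dual-thin half of the statement.
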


\begin{rem}
Recall that the universal enveloping algebra $U(\mathfrak{sl}_2(\mathbb{C}))$ is defined by the generators $\mathfrak{x},\mathfrak{y},\mathfrak{h}$ and the relations
\begin{equation*}
	\mathfrak{x}\mathfrak{y}-\mathfrak{y}\mathfrak{x}=\mathfrak{h}, \qquad \mathfrak{h}\mathfrak{x}-\mathfrak{x}\mathfrak{h}=2\mathfrak{x}, \qquad \mathfrak{h}\mathfrak{y}-\mathfrak{y}\mathfrak{h}=-2\mathfrak{y}.
\end{equation*}
There is a surjective homomorphism $U(\mathfrak{sl}_2(\mathbb{C}))\rightarrow\bm{T}$ such that (cf.~\cite[Lemma 7.5]{Go2002EJC})
\begin{equation*}
	\mathfrak{x} \mapsto \sum_{i=1}^n E_{i-1}A_1^*E_i, \qquad \mathfrak{y}\mapsto \sum_{i=0}^{n-1} E_{i+1}A_1^*E_i, \qquad \mathfrak{h}\mapsto A_1.
\end{equation*}
Every irreducible $\bm{T}$-module is then irreducible as an $\mathfrak{sl}_2(\mathbb{C})$-module.
We also obtain another surjective homomorphism $U(\mathfrak{sl}_2(\mathbb{C}))\rightarrow\bm{T}$ by interchanging $A_1$ and $A_1^*$ and replacing the $E_i$ by the $E_i^*$ above; cf.~\cite[Lemma 5.3]{Go2002EJC}.
\end{rem}

From now on, we fix an orthogonal irreducible decomposition
\begin{equation}\label{irreducible decomposition}
	V=\bigoplus_{W\in\Lambda}W
\end{equation}
of the standard module $V$.
In view of Theorem \ref{irreducible T-modules}, let
\begin{equation}\label{endpoint r}
	\Lambda_r=\{W\in\Lambda:r(W)=r^*(W)=r\} \qquad (0\leqslant r\leqslant\lfloor n/2 \rfloor),
\end{equation}
and fix a unit vector $v_W\in E_rW$ for each $W\in\Lambda_r$.
Since
\begin{equation}\label{dimension in terms of multiplicities}
	\dim E_iV=\sum_{W\in\Lambda}\dim E_iW=\sum_{r=0}^i |\Lambda_r| \qquad (0\leqslant i\leqslant\lfloor n/2 \rfloor)
\end{equation}
by Theorem \ref{irreducible T-modules}, it follows from \eqref{multiplicities} that
\begin{equation*}
	|\Lambda_r|=\binom{n}{r}-\binom{n}{r-1} \qquad (0\leqslant r\leqslant\lfloor n/2 \rfloor).
\end{equation*}
It is known (cf.~\cite[Theorem 9.2]{Go2002EJC}) that if $W\in\Lambda_r$ then the vectors
\begin{equation}\label{standard basis}
	E_r^*v_W,\, E_{r+1}^*v_W, \dots,\, E_{n-r}^*v_W
\end{equation}
form an orthogonal basis of $W$, called a \emph{standard basis} of $W$.
By \cite[Lemma 6.6]{Go2002EJC}, we also have
\begin{equation}\label{norms}
	\|E_i^*v_W\|^2=\binom{n-2r}{i-r} \|E_r^*v_W\|^2 \qquad (r\leqslant i\leqslant n-r).
\end{equation}
We note that
\begin{equation}\label{norm of initial vector}
	1=\|v_W\|^2=\sum_{i=r}^{n-r}\|E_i^*v_W\|^2=2^{n-2r}\|E_r^*v_W\|^2.
\end{equation}

For $W,W'\in\Lambda_r$, we observe that the linear map $W\rightarrow W'$ defined by
\begin{equation*}
	E_i^*v_W \mapsto E_i^*v_{W'} \qquad (r\leqslant i\leqslant n-r)
\end{equation*}
is an isometric isomorphism of $\bm{T}$-modules.
Let
\begin{equation}\label{definition of matrix units}
	\breve{E}_r^{i,j}=\frac{2^{n-2r}}{\sqrt{\!\binom{n-2r}{i-r}\!\binom{n-2r}{j-r}}}\sum_{W\in\Lambda_r} (E_i^*v_W)(E_j^*v_W)^{\dagger} \qquad (r\leqslant i,j\leqslant n-r).
\end{equation}
Then we have
\begin{equation}\label{hermitian}
	(\breve{E}_r^{i,j})^{\dagger}=\breve{E}_r^{j,i} \qquad (r\leqslant i,j\leqslant n-r),
\end{equation}
and from \eqref{norms} and \eqref{norm of initial vector} it follows that
\begin{equation*}
	\breve{E}_r^{i,j}\breve{E}_{r'}^{i',j'}=\delta_{r,r'}\delta_{j,i'}\breve{E}_r^{i,j'}
\end{equation*}
for $0\leqslant r,r'\leqslant\lfloor n/2 \rfloor$, $r\leqslant i,j\leqslant n-r$, and $r'\leqslant i',j'\leqslant n-r'$.
By Theorem \ref{irreducible T-modules} and Wedderburn's theorem (cf.~\cite[Section 3]{CR1990B}), $\bm{T}$ is isomorphic to the direct sum of full matrix algebras
\begin{equation*}
	\bm{T}\cong \bigoplus_{r=0}^{\lfloor n/2 \rfloor} M_{n-2r+1}(\mathbb{C}),
\end{equation*}
and the $\breve{E}_r^{i,j}$ form an orthogonal basis of $\bm{T}$.
See also \cite[Section 2]{Gijswijt2005D}.
We note that
\begin{equation}\label{second basis for block}
	E_i^*\bm{T}E_j^*=\operatorname{span}\bigl\{\breve{E}_r^{i,j} : 0\leqslant r\leqslant \min\{i,j,n-i,n-j\}\bigr\} \qquad (0\leqslant i, j\leqslant n).
\end{equation}

We now recall the \emph{Hahn polynomials} \cite[Section 1.5]{KS1998R}
\begin{equation}\label{Hahn}
	Q_r(\xi;\alpha,\beta,N)=\hypergeometricseries{3}{2}{-\xi,-r,r+\alpha+\beta+1}{\alpha+1,-N}{1}\in\mathbb{R}[\xi] \quad (0\leqslant r \leqslant N),
\end{equation}
where
\begin{equation*}
	\hypergeometricseries{s}{t}{a_1,\dots,a_s}{b_1,\dots,b_t}{c} = \sum_{i=0}^{\infty}\frac{(a_1)_i\cdots(a_s)_i}{(b_1)_i\cdots(b_t)_i}\frac{c^i}{i!},
\end{equation*}
and
\begin{equation*}
	(a)_i=a(a+1)\cdots(a+i-1).
\end{equation*}
For $\alpha,\beta>-1$, or for $\alpha,\beta<-N$, we have
\begin{align}
	\sum_{\xi=0}^N\binom{\alpha+\xi}{\xi}\!\binom{\beta+N-\xi}{N-\xi} & Q_r(\xi;\alpha,\beta,N) Q_{r'}(\xi;\alpha,\beta,N) \label{orthogonality} \\
	&=\delta_{r,r'} \frac{(-1)^r(r+\alpha+\beta+1)_{N+1}(\beta+1)_rr!}{(2r+\alpha+\beta+1)(\alpha+1)_r(-N)_rN!}. \notag
\end{align}

Our aim is to describe the entries of the $\breve{E}_r^{i,j}$.
In view of \eqref{hermitian}, we will assume for the rest of this section that
\begin{equation*}
	0\leqslant i\leqslant j\leqslant n.
\end{equation*}
By Proposition \ref{triple regularity} and Lemma \ref{nonzero triple product}, we have
\begin{equation*}
	E_i^*\bm{T}E_j^*=\operatorname{span}\bigl\{ E_i^*A_{2\xi+j-i} E_j^* :  0\leqslant \xi\leqslant \min\{i,n-j\}\bigr\}.
\end{equation*}
Moreover, it follows that (cf.~\eqref{second basis for block})
\begin{align}
	E_i^* & A_{2\xi+j-i} E_j^* \label{block-diagonalization} \\
	&=\sum_{r=0}^{\min\{i,n-j\}} \!\!\!\hypergeometricseries{3}{2}{-\xi,-r,r-n-1}{j-n, -i}{1}\!\frac{\binom{j}{i-\xi}\!\binom{n-j}{\xi}\!\binom{j-r}{j-i}\sqrt{\!\binom{n-2r}{j-r}}}{\binom{j}{i}\sqrt{\!\binom{n-2r}{i-r}}} \breve{E}_r^{i,j}. \notag
\end{align}
This formula can be found in \cite[Section 10]{MT2009EJC}.
See also \cite{Schrijver2005IEEE,Vallentin2009LAA} for similar calculations.

If $i\leqslant n-j$ then
\begin{equation*}
	\hypergeometricseries{3}{2}{-\xi,-r,r-n-1}{j-n, -i}{1}=Q_r(\xi;j-n-1,-j-1,i).
\end{equation*}
Since
\begin{equation*}
	\binom{j}{i-\xi}\!\binom{n-j}{\xi}=(-1)^i \binom{j-n-1+\xi}{\xi}\!\binom{-j-1+i-\xi}{i-\xi},
\end{equation*}
it follows from \eqref{orthogonality} (applied to $\alpha=j-n-1$, $\beta=-j-1$, $N=i$) and \eqref{block-diagonalization} that, for $0\leqslant r\leqslant i$,
\begin{align*}
		\sum_{\xi=0}^i &\ \hypergeometricseries{3}{2}{-\xi,-r,r-n-1}{j-n, -i}{1}\! E_i^*A_{2\xi+j-i}E_j^*  \\
		& \qquad = \frac{(-1)^r(r-n-1)_{i+1}(-j)_rr!}{(2r-n-1)(j-n)_r(-i)_ri!} \cdot \frac{(-1)^i\binom{j-r}{j-i}\sqrt{\!\binom{n-2r}{j-r}}}{\binom{j}{i}\sqrt{\!\binom{n-2r}{i-r}}} \breve{E}_r^{i,j} \\
		& \qquad =\frac{\binom{n}{i}\!\binom{n-i}{r}\sqrt{\!\binom{n-2r}{j-r}}}{\left(\!\binom{n}{r}\!-\!\binom{n}{r-1}\!\right)\!\!\binom{n-j}{r}\sqrt{\!\binom{n-2r}{i-r}}} \breve{E}_r^{i,j}.
\end{align*}
Likewise, if $n-j\leqslant i$ then
\begin{equation*}
	\hypergeometricseries{3}{2}{-\xi,-r,r-n-1}{j-n, -i}{1}=Q_r(\xi;-i-1,i-n-1,n-j).
\end{equation*}
In this case, since
\begin{equation*}
	\binom{j}{i-\xi}\!\binom{n-j}{\xi}\!\binom{j}{i}^{\!\!-1}=(-1)^{n-j}\binom{-i-1+\xi}{\xi}\!\binom{i-1-j-\xi}{n-j-\xi}\!\binom{n-i}{n-j}^{\!\!-1},
\end{equation*}
again it follows from \eqref{orthogonality} (applied to $\alpha=-i-1$, $\beta=i-n-1$, $N=n-j$) and \eqref{block-diagonalization} that, for $0\leqslant r\leqslant n-j$,
\begin{align*}
		\sum_{\xi=0}^{n-j}&\ \hypergeometricseries{3}{2}{-\xi,-r,r-n-1}{j-n, -i}{1}\! E_i^*A_{2\xi+j-i}E_j^*  \\
		& \qquad =\frac{(-1)^r(r-n-1)_{n-j+1}(i-n)_rr!}{(2r-n-1)(-i)_r(j-n)_r(n-j)!} \cdot \frac{(-1)^{n-j}\binom{j-r}{j-i}\sqrt{\!\binom{n-2r}{j-r}}}{\binom{n-i}{n-j}\!\sqrt{\!\binom{n-2r}{i-r}}} \breve{E}_r^{i,j} \\
		& \qquad =\frac{\binom{n}{i}\!\binom{n-i}{r}\sqrt{\!\binom{n-2r}{j-r}}}{\left(\!\binom{n}{r}\!-\!\binom{n}{r-1}\!\right)\!\!\binom{n-j}{r}\sqrt{\!\binom{n-2r}{i-r}}} \breve{E}_r^{i,j}.
\end{align*}
In either case, it follows that
\begin{align}
	\breve{E}_r^{i,j}=&\,\frac{\left(\!\binom{n}{r}\!-\!\binom{n}{r-1}\!\right)\!\!\binom{n-j}{r}\sqrt{\!\binom{n-2r}{i-r}}}{\binom{n}{i}\!\binom{n-i}{r}\sqrt{\!\binom{n-2r}{j-r}}} \label{Q} \\
	& \ \ \times \!\! \sum_{\xi=0}^{\min\{i,n-j\}}\!\!\!\!\hypergeometricseries{3}{2}{-\xi,-r,r-n-1}{j-n, -i}{1} \! E_i^*A_{2\xi+j-i}E_j^* \notag
\end{align}
for $0\leqslant i\leqslant j\leqslant n$ and $0\leqslant r\leqslant \min\{i,n-j\}$.

\section{Tight relative \texorpdfstring{$2e$}{2e}-designs on two shells in \texorpdfstring{$\mathcal{Q}_n$}{Qn}}\label{sec: main result}

We retain the notation of the previous sections.
In this section, we discuss tight relative $2e$-designs $(Y,\omega)$ in $\mathcal{Q}_n$ supported on two shells $X_{\ell}\sqcup X_m$, i.e., $L=\{\ell,m\}$ (cf.~\eqref{L}).
Recall from \eqref{explicit lower bound} that we have in this case
\begin{equation*}
	|Y|=\binom{n}{e}+\binom{n}{e-1},
\end{equation*}
but recall also that this is valid under the additional condition that $e\leqslant \ell,m\leqslant n-e$.
However, both $(Y\cap X_{\ell},E_{\ell}^*\omega)$ and $(Y\cap X_m,E_m^*\omega)$ are relative $(2e-1)$-designs by Theorem \ref{Assmus-Mattson}, so that if $\ell<2e$ or $\ell>n-2e$ for example, then $(Y\cap X_{\ell},E_{\ell}^*\omega)$ must be trivial in view of Example \ref{designs in Hamming schemes}, i.e., $X_{\ell}\subset Y$ and $\omega$ is constant on $X_{\ell}$, and hence $(Y\cap X_m,E_m^*\omega)$ is by itself a relative $2e$-design; cf.~Remark \ref{remove trivial part}.
This shows that the above condition is not a restrictive one.
We also note that

\begin{lem}\label{complement}
Let $(Y,\omega)$ be a relative $t$-design in $\mathcal{Q}_n$ supported on $\bigsqcup_{\ell\in L}X_{\ell}$.
Then $(Y',A_n\omega)$ is a relative $t$-design supported on $\bigsqcup_{\ell\in L}X_{n-\ell}$, where $Y'=\{y':y\in Y\}$, and for every $y\in X$, $y'$ denotes the unique vertex such that $(y,y')\in R_n$.
\end{lem}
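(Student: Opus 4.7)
The plan is to exploit the fact that in $\mathcal{Q}_n$ the relation $R_n$ is antipodal, so $A_n$ is the permutation matrix of the involution $y\mapsto y'$ (bit-complementation). The proof then splits into a support check and an eigenspace check, both of which are short.

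For the support and shell assignment: since $A_n$ is a permutation matrix, $(A_n\omega)(y)=\omega(y')$, whence $\operatorname{supp}(A_n\omega)=Y'$. Because complementation flips the agreement pattern with $x$, $(x,y)\in R_\ell$ is equivalent to $(x,y')\in R_{n-\ell}$, so $y\mapsto y'$ maps $Y\cap X_\ell$ bijectively onto $Y'\cap X_{n-\ell}$ for every $\ell$, and $(Y',A_n\omega)$ is supported on $\bigsqcup_{\ell\in L}X_{n-\ell}$, as required.

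For the relative-design condition, the key ingredient is the eigenvalue identity $A_nE_i=(-1)^iE_i$ $(0\leqslant i\leqslant n)$ for the binary Hamming scheme; this follows either from the Krawtchouk evaluation $K_n(i;n)=(-1)^i$ or, more conceptually, from the fact that $E_iV$ is spanned by the characters $\chi_S$ of $(\mathbb{Z}/2\mathbb{Z})^n$ with $|S|=i$, on each of which translation-by-$(1,\ldots,1)$ (i.e.\ the action of $A_n$) acts by $(-1)^{|S|}$. Granting this, since $A_n\in\bm{A}$ commutes with every $E_i$, one obtains
\[ E_i(A_n\omega)=A_nE_i\omega=(-1)^iE_i\omega\in\operatorname{span}\{E_i\hat{x}\}\qquad(1\leqslant i\leqslant t), \]
the last containment being precisely the relative-$t$-design hypothesis on $(Y,\omega)$, which finishes the verification. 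I do not foresee any obstacle: the Krawtchouk identity is standard for $H(n,2)$. As a conceptual remark, applying the identity to $\hat{x}$ gives $E_i\hat{x}'=(-1)^iE_i\hat{x}$, hence $\operatorname{span}\{E_i\hat{x}\}=\operatorname{span}\{E_i\hat{x}'\}$, which explains transparently why passing to antipodes preserves the relative-$t$-design property while formally interchanging the roles of $x$ and $x'$ as base vertices.
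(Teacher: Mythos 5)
Your proof is correct and follows essentially the same route as the paper, whose one-line argument is that $E_iA_n\in\operatorname{span}\{E_i\}$; you simply make the scalar explicit as $(-1)^i$ and add the (routine) verification of the support statement. Nothing further is needed.
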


\begin{proof}
Immediate from $E_iA_n\in\operatorname{span}\{E_i\}$ $(0\leqslant i\leqslant n)$.
\end{proof}

In view of the above comments, we now make the following assumption:

\begin{assump}\label{assumption on Y}
In this section, let $(Y,\omega)$ be a tight relative $2e$-design $(e\in\mathbb{N})$ in $\mathcal{Q}_n$ supported on two shells $X_{\ell}\sqcup X_m$, where
\begin{equation*}
	e\leqslant\ell<m\leqslant n-\ell\,(\leqslant n-e).
\end{equation*}
\end{assump}

Our aim is to show that $Y$ then induces the structure of a coherent configuration with two fibers, and to obtain a necessary condition on the existence of such $(Y,\omega)$ akin to Delsarte's theorem on tight $2e$-designs.
To this end, we first recall the proof of \eqref{explicit lower bound} given in \cite[Theorem 2.7, Example 2.9]{BBST2015EJC} under the above assumption.

For convenience, set
\begin{equation*}
	E_L^*=E_{\ell}^*+E_m^*.
\end{equation*}
By \eqref{irreducible decomposition} and \eqref{endpoint r}, we have
\begin{equation}\label{important subspace}
	E_L^*\!\left(\sum_{i=0}^eE_iV\right)=\sum_{r=0}^e \sum_{W\in\Lambda_r} E_L^*\!\left(\sum_{i=r}^eE_iW\right).
\end{equation}
Let $W\in\Lambda_r$, where $0\leqslant r\leqslant e$.
Recall Theorem \ref{irreducible T-modules} and also the standard basis \eqref{standard basis} of $W$.
If $r=e$ then $E_eW$ is spanned by $v_W$, and hence we have
\begin{equation*}
	E_L^*E_eW=\operatorname{span}\{E_L^*v_W\}.
\end{equation*}
Note that $E_L^*v_W$ is non-zero by Assumption \ref{assumption on Y}, and hence
\begin{equation*}
	\dim E_L^*E_eW=1
\end{equation*}
in this case.
Suppose next that $0\leqslant r<e$.
On the one hand, since
\begin{equation*}
	E_L^*\!\left(\sum_{i=r}^eE_iW\right) \subset E_L^*W=E_{\ell}^*W+E_m^*W,
\end{equation*}
we have
\begin{equation*}
	\dim E_L^*\!\left(\sum_{i=r}^eE_iW\right)\leqslant 2.
\end{equation*}
On the other hand, it follows from \eqref{tridiagonal action} that
\begin{equation}\label{comment 1}
	v_W,A_1^*v_W\in E_rW+E_{r+1}W\subset \sum_{i=r}^eE_iW,
\end{equation}
and hence
\begin{equation*}
	E_L^*v_W,E_L^*A_1^*v_W\in E_L^*\!\left(\sum_{i=r}^eE_iW\right).
\end{equation*}
Moreover, we have (cf.~\eqref{dual adjacency matrix})
\begin{equation*}
	E_L^*v_W=E_{\ell}^*v_W+E_m^*v_W, \qquad E_L^*A_1^*v_W=\theta_{\ell}^*E_{\ell}^*v_W+\theta_m^*E_m^*v_W,
\end{equation*}
so that these two vectors are non-zero and are linearly independent by Assumption \ref{assumption on Y} and since $\theta_{\ell}^*\ne\theta_m^*$.
It follows that
\begin{equation*}
	\dim E_L^*\!\left(\sum_{i=r}^eE_iW\right)=2.
\end{equation*}
Note that in this case we in fact have
\begin{equation*}
	E_L^*\!\left(\sum_{i=r}^eE_iW\right)=\operatorname{span}\{E_{\ell}^*v_W,E_m^*v_W\},
\end{equation*}
as
\begin{equation}\label{comment 2}
	E_{\ell}^*v_W=E_L^*\frac{\theta_m^*I-A_1^*}{\theta_m^*-\theta_{\ell}^*}v_W, \qquad E_m^*v_W=E_L^*\frac{\theta_{\ell}^*I-A_1^*}{\theta_{\ell}^*-\theta_m^*}v_W.
\end{equation}
Combining these comments, we now obtain \eqref{explicit lower bound} as follows:
\begin{align*}
	\dim E_L^*\!\left(\sum_{i=0}^eE_iV\right) &= \sum_{r=0}^e \sum_{W\in\Lambda_r} \dim E_L^*\!\left(\sum_{i=r}^eE_iW\right) \\
	&= |\Lambda_e|+\sum_{r=0}^{e-1} 2|\Lambda_r| \\
	&= \dim E_eV+\dim E_{e-1}V \\
	&= \binom{n}{e}+\binom{n}{e-1},
\end{align*}
where we have used \eqref{multiplicities} and \eqref{dimension in terms of multiplicities}.

By the above discussions, the set of vectors below forms an orthogonal basis of the subspace \eqref{important subspace}:
\begin{equation*}
	\left(\bigsqcup_{r=0}^{e-1}\bigsqcup_{W\in\Lambda_r}\!\{E_{\ell}^*v_W,E_m^*v_W\}\right)\bigsqcup\left(\bigsqcup_{W\in\Lambda_e}\!\{E_L^*v_W\}\right).
\end{equation*}
As in the proof of Theorem \ref{constant on fiber}, let
\begin{equation*}
	D=\operatorname{diag}\omega.
\end{equation*}
We next apply $\sqrt{\!D}$ to the above basis vectors and compute their inner products.
First, let $W,W'\in\bigsqcup_{r=0}^{e-1}\Lambda_r$.
It is clear that
\begin{equation}\label{orthogonal 1}
	\left\langle \sqrt{\!D}E_{\ell}^*v_W,\sqrt{\!D}E_m^*v_{W'}\right\rangle=\left\langle \sqrt{\!D}E_m^*v_W,\sqrt{\!D}E_{\ell}^*v_{W'}\right\rangle=0.
\end{equation}
By \eqref{comment 2}, we have
\begin{equation*}
	(E_{\ell}^*\overline{v_W})\circ(E_{\ell}^*v_{W'})=E_L^*u,
\end{equation*}
where $\overline{\rule{0ex}{1ex} \ \, }$ means complex conjugate, and
\begin{equation*}
	u=\left(\frac{\theta_m^*I-A_1^*}{\theta_m^*-\theta_{\ell}^*}\overline{v_W}\right)\!\circ\!\left(\frac{\theta_m^*I-A_1^*}{\theta_m^*-\theta_{\ell}^*}v_{W'}\right).
\end{equation*}
Observe that $u$ belongs to $\sum_{i=0}^{2e}E_iV$ by \eqref{Hadamard of flags} (applied to $h=k=e$) and \eqref{comment 1}.
Hence, by Proposition \ref{akin to Euclidean case} we have
\begin{align}
	\left\langle \sqrt{\!D}E_{\ell}^*v_W,\sqrt{\!D}E_{\ell}^*v_{W'}\right\rangle&=\langle\omega,E_L^*u\rangle \label{orthogonal 2} \\
	&=\langle\omega,u\rangle \notag \\
	&=\frac{\langle \omega,\hat{X}_{\ell}\rangle}{|X_{\ell}|}\langle \hat{X}_{\ell},u\rangle+\frac{\langle \omega,\hat{X}_m\rangle}{|X_m|}\langle \hat{X}_m,u\rangle \notag \\
	&=\frac{\langle \omega,\hat{X}_{\ell}\rangle}{|X_{\ell}|}\langle \hat{X}_{\ell},E_L^*u\rangle \notag \\
	&=\frac{\langle \omega,\hat{X}_{\ell}\rangle}{|X_{\ell}|}\langle E_{\ell}^*v_W,E_{\ell}^*v_{W'}\rangle \notag \\
	&=\delta_{W,W'}\,\frac{\langle \omega,\hat{X}_{\ell}\rangle}{|X_{\ell}|}\|E_{\ell}^*v_W\|^2. \notag
\end{align}
Likewise, we have
\begin{equation}\label{orthogonal 3}
	\left\langle \sqrt{\!D}E_m^*v_W,\sqrt{\!D}E_m^*v_{W'}\right\rangle=\delta_{W,W'}\,\frac{\langle \omega,\hat{X}_m\rangle}{|X_m|}\|E_m^*v_W\|^2.
\end{equation}
Next, let $W\in\bigsqcup_{r=0}^{e-1}\Lambda_r$ and $W'\in\Lambda_e$.
Then, by the same argument we have
\begin{equation}\label{orthogonal 4}
	\left\langle \sqrt{\!D}E_{\ell}^*v_W,\sqrt{\!D}E_L^*v_{W'}\right\rangle=\left\langle \sqrt{\!D}E_m^*v_W,\sqrt{\!D}E_L^*v_{W'}\right\rangle=0.
\end{equation}
Finally, let $W,W'\in\Lambda_e$.
In this case, we have
\begin{align}
	\left\langle \sqrt{\!D}E_L^*v_W,\right.\!\! & \left.\sqrt{\!D}E_L^*v_{W'}\right\rangle \label{orthogonal 5} \\
	&=\delta_{W,W'}\!\left(\frac{\langle \omega,\hat{X}_{\ell}\rangle}{|\hat{X}_{\ell}|}\|E_{\ell}^*v_W\|^2+\frac{\langle \omega,\hat{X}_m\rangle}{|\hat{X}_m|}\|E_m^*v_W\|^2\right). \notag
\end{align}
Since $(Y,\omega)$ is a tight relative $2e$-design, it follows from \eqref{orthogonal 1}--\eqref{orthogonal 5} that the set of vectors below is an orthogonal basis of the subspace $\sqrt{\!D}V=\operatorname{span}\{\hat{y}:y\in Y\}$ of dimension $|Y|=\binom{n}{e}+\binom{n}{e-1}$:
\begin{equation*}
	\left(\bigsqcup_{r=0}^{e-1}\bigsqcup_{W\in\Lambda_r}\!\!\left\{\sqrt{\!D}E_{\ell}^*v_W,\sqrt{\!D}E_m^*v_W\right\}\right)\bigsqcup\left(\bigsqcup_{W\in\Lambda_e}\!\!\left\{\sqrt{\!D}E_L^*v_W\right\}\right).
\end{equation*}

For convenience, set
\begin{equation*}
	Y_{\ell}=Y\cap X_{\ell}, \qquad Y_m=Y\cap X_m.
\end{equation*}
We will naturally make the following identification by discarding irrelevant entries:
\begin{gather*}
	\sqrt{\!D}E_{\ell}^*V=\operatorname{span}\{\hat{y}:y\in Y_{\ell}\} \ \longleftrightarrow \ \mathbb{C}^{Y_{\ell}}, \\
	\sqrt{\!D}E_m^*V=\operatorname{span}\{\hat{y}:y\in Y_m\} \ \longleftrightarrow \ \mathbb{C}^{Y_m}.
\end{gather*}
We write
\begin{equation*}
	\Lambda_r=\bigl\{W_r^1,W_r^2,\dots,W_r^{|\Lambda_r|}\bigr\} \qquad (0\leqslant r\leqslant e).
\end{equation*}
For $0\leqslant r\leqslant e$, define a $|Y_{\ell}|\times |\Lambda_r|$ matrix $H_r^{\ell}$ and a $|Y_m|\times |\Lambda_r|$ matrix $H_r^m$ by
\begin{align*}
	H_r^{\ell}&=\left[\begin{array}{c|c|c} \!\!\sqrt{\!D}E_{\ell}^*v_{W_r^1}\! & \cdots & \!\sqrt{\!D}E_{\ell}^*v_{W_r^{|\Lambda_r|}}\!\! \end{array}\right], \\
	H_r^m&=\left[\begin{array}{c|c|c} \!\!\sqrt{\!D}E_m^*v_{W_r^1}\! & \cdots & \!\sqrt{\!D}E_m^*v_{W_r^{|\Lambda_r|}}\!\! \end{array}\right].
\end{align*}
We then define a \emph{characteristic matrix} $H$ of $(Y,\omega)$ by
\begin{equation*}
	H=\left[\begin{array}{c|c|c|c|c|c|c} \!\!H_0^{\ell}\! & \cdots & \!H_{e-1}^{\ell}\! & \!O\! & \cdots & \!O\! & \!H_e^{\ell}\!\! \\[1.2pt] \hline \rule{0pt}{11pt} \!\!O\! & \cdots & \!O\! & \!H_0^m\! & \cdots & \!H_{e-1}^m\! & \!H_e^m\!\! \end{array}\right].
\end{equation*}
We note that $H$ is a square matrix of size $|Y|=\binom{n}{e}+\binom{n}{e-1}$.
By \eqref{norms}, \eqref{norm of initial vector}, and \eqref{orthogonal 1}--\eqref{orthogonal 5}, and since
\begin{equation*}
	|X_i|=\binom{n}{i} \qquad (0\leqslant i\leqslant n),
\end{equation*}
we have
\begin{equation}\label{unitary}
	H^{\dagger}H=\left(\mathop{\oplus}_{r=0}^{e-1}\kappa_r^{\ell} I_{|\Lambda_r|}\right)\!\oplus\!\left(\mathop{\oplus}_{r=0}^{e-1}\kappa_r^m I_{|\Lambda_r|}\right)\!\oplus\kappa_e I_{|\Lambda_e|},
\end{equation}
where
\begin{gather*}
	\kappa_r^{\ell}=\frac{\omega_{\ell}\binom{n-2r}{\ell-r}}{2^{n-2r}\binom{n}{\ell}}	, \ \ \kappa_r^m=\frac{\omega_m\binom{n-2r}{m-r}}{2^{n-2r}\binom{n}{m}} \qquad (0\leqslant r<e), \\
	\kappa_e=\frac{\omega_{\ell}\binom{n-2e}{\ell-e}}{2^{n-2e}\binom{n}{\ell}}+\frac{\omega_m\binom{n-2e}{m-e}}{2^{n-2e}\binom{n}{m}},
\end{gather*}
and we abbreviate
\begin{equation*}
	\omega_{\ell}=\langle\omega,\hat{X}_{\ell}\rangle, \qquad \omega_m=\langle\omega,\hat{X}_m\rangle.
\end{equation*}
Let $K$ denote the diagonal matrix on the RHS in \eqref{unitary}.
Then it follows that
\begin{align}
	I_{|Y|}&=HK^{-1}H^{\dagger} \label{three equations} \\
	&= \left[\begin{array}{c|c} \sum_{r=0}^e \frac{1}{\kappa_r^{\ell}} H_r^{\ell} (H_r^{\ell})^{\dagger} & \frac{1}{\kappa_e} H_e^{\ell} (H_e^m)^{\dagger} \\[1.2pt] \hline \rule{0pt}{11pt}\frac{1}{\kappa_e} H_e^m (H_e^{\ell})^{\dagger} & \sum_{r=0}^e \frac{1}{\kappa_r^m} H_r^m (H_r^m)^{\dagger} \end{array} \right], \notag
\end{align}
where we write
\begin{equation}\label{temporary notation}
	\kappa_e^{\ell}=\kappa_e^m:=\kappa_e
\end{equation}
for brevity.
In particular, we have
\begin{equation}\label{idempotents lm1}
	\frac{1}{\kappa_e} H_e^{\ell} (H_e^m)^{\dagger}=O.
\end{equation}
Moreover, from \eqref{unitary} and \eqref{three equations} it follows that
\begin{align}
	\left(\frac{1}{\kappa_r^{\ell}} H_r^{\ell}(H_r^{\ell})^{\dagger}\right)\!\!\left(\frac{1}{\kappa_{r'}^{\ell}} H_{r'}^{\ell}(H_{r'}^{\ell})^{\dagger}\right) \! &=\delta_{r,r'}\frac{1}{\kappa_r^{\ell}} H_r^{\ell}(H_r^{\ell})^{\dagger} \qquad (0\leqslant r,r'<e), \label{idempotents l1} \\
	\frac{1}{\kappa_e} H_e^{\ell}(H_e^{\ell})^{\dagger}&=I_{|Y_{\ell}|}-\sum_{r=0}^{e-1} \frac{1}{\kappa_r^{\ell}} H_r^{\ell}(H_r^{\ell})^{\dagger}, \label{idempotents l2} \\
	\left(\frac{1}{\kappa_r^m} H_r^m(H_r^m)^{\dagger}\right)\!\!\left(\frac{1}{\kappa_{r'}^m} H_{r'}^m(H_{r'}^m)^{\dagger}\right) \! &=\delta_{r,r'}\frac{1}{\kappa_r^m} H_r^m(H_r^m)^{\dagger} \qquad (0\leqslant r,r'<e), \label{idempotents m1} \\
	\frac{1}{\kappa_e} H_e^m(H_e^m)^{\dagger}&=I_{|Y_m|}-\sum_{r=0}^{e-1} \frac{1}{\kappa_r^m} H_r^m(H_r^m)^{\dagger}. \label{idempotents m2}
\end{align}
Note that the matrices $(\kappa_r^{\ell})^{-1}H_r^{\ell}(H_r^{\ell})^{\dagger},(\kappa_r^m)^{-1} H_r^m(H_r^m)^{\dagger}$ ($0\leqslant r<e$) are non-zero since $H_r^{\ell},H_r^m$ are non-zero.
Likewise, by setting
\begin{equation*}
	\kappa_r=\sqrt{\kappa_r^{\ell}\kappa_r^m} \qquad (0\leqslant r<e)
\end{equation*}
for brevity, we have
\begin{align}
	\left(\frac{1}{\kappa_r^{\ell}} H_r^{\ell}(H_r^{\ell})^{\dagger}\right)\!\!\left(\frac{1}{\kappa_{r'}} H_{r'}^{\ell}(H_{r'}^m)^{\dagger}\right) \! &=\delta_{r,r'}\frac{1}{\kappa_r} H_r^{\ell}(H_r^m)^{\dagger} \qquad (0\leqslant r,r'<e), \label{idempotents lm2} \\
	\left(\frac{1}{\kappa_r} H_r^{\ell}(H_r^m)^{\dagger}\right)\!\!\left(\frac{1}{\kappa_{r'}^m} H_{r'}^m(H_{r'}^m)^{\dagger}\right) \! &=\delta_{r,r'}\frac{1}{\kappa_r} H_r^{\ell}(H_r^m)^{\dagger} \qquad (0\leqslant r,r'<e), \label{idempotents lm3} \\
	\left(\frac{1}{\kappa_r} H_r^{\ell}(H_r^m)^{\dagger}\right)\!\!\left(\frac{1}{\kappa_{r'}} H_{r'}^m(H_{r'}^{\ell})^{\dagger}\right) \! &=\delta_{r,r'}\frac{1}{\kappa_r^{\ell}} H_r^{\ell}(H_r^{\ell})^{\dagger} \qquad (0\leqslant r,r'<e), \label{idempotents lm4} \\
	\left(\frac{1}{\kappa_r} H_r^m(H_r^{\ell})^{\dagger}\right)\!\!\left(\frac{1}{\kappa_{r'}} H_{r'}^{\ell}(H_{r'}^m)^{\dagger}\right) \! &=\delta_{r,r'}\frac{1}{\kappa_r^m} H_r^m(H_r^m)^{\dagger} \qquad (0\leqslant r,r'<e). \label{idempotents lm5}
\end{align}
Since the matrices $(\kappa_r^{\ell})^{-1}H_r^{\ell}(H_r^{\ell})^{\dagger}, (\kappa_r^m)^{-1} H_r^m(H_r^m)^{\dagger}$ ($0\leqslant r<e$) are non-zero, it follows from \eqref{idempotents lm2}--\eqref{idempotents lm5} that the matrices $(\kappa_r)^{-1} H_r^{\ell}(H_r^m)^{\dagger}$ $(0\leqslant r<e)$ are non-zero and are linearly independent.

It follows from Theorem \ref{constant on fiber} and Proposition \ref{triple regularity} that $\omega$ is constant on each of $Y_{\ell}$ and $Y_m$, from which it follows that
\begin{equation}\label{values of omega}
	D_{y,y}=\omega(y)=\begin{cases} \dfrac{\omega_{\ell}}{|Y_{\ell}|} & \text{if} \ y\in Y_{\ell}, \\[3mm] \dfrac{\omega_m}{|Y_m|} & \text{if} \ y\in Y_m. \end{cases}
\end{equation}
Hence, by comparing with the formula \eqref{definition of matrix units} for the matrices $\breve{E}_r^{i,j}$, we have
\begin{align}
	\frac{1}{\kappa_r^{\ell}} H_r^{\ell}(H_r^{\ell})^{\dagger}&=\frac{\binom{n}{\ell}}{|Y_{\ell}|}\breve{E}_r^{\ell,\ell}|_{Y_{\ell}\times Y_{\ell}} \qquad (0\leqslant r<e), \label{restriction l1} \\
	\frac{1}{\kappa_e} H_e^{\ell}(H_e^{\ell})^{\dagger}&=\frac{\omega_{\ell}\binom{n-2e}{\ell-e}}{2^{n-2e}\kappa_e|Y_{\ell}|}\breve{E}_e^{\ell,\ell}|_{Y_{\ell}\times Y_{\ell}}, \label{restriction l2} \\
	\frac{1}{\kappa_r^m} H_r^m(H_r^m)^{\dagger}&=\frac{\binom{n}{m}}{|Y_m|}\breve{E}_r^{m,m}|_{Y_m\times Y_m} \qquad (0\leqslant r<e), \label{restriction m1} \\
	\frac{1}{\kappa_e} H_e^m(H_e^m)^{\dagger}&=\frac{\omega_m\binom{n-2e}{m-e}}{2^{n-2e}\kappa_e|Y_m|}\breve{E}_e^{m,m}|_{Y_m\times Y_m}, \label{restriction m2} \\
	\frac{1}{\kappa_r} H_r^{\ell}(H_r^m)^{\dagger}&=\frac{\sqrt{\!\binom{n}{\ell}\!\binom{n}{m}}}{\sqrt{|Y_{\ell}||Y_m|}}\breve{E}_r^{\ell,m}|_{Y_{\ell}\times Y_m} \qquad (0\leqslant r<e), \label{restriction lm1} \\
	\frac{1}{\kappa_e} H_e^{\ell}(H_e^m)^{\dagger}&=\frac{\sqrt{\omega_{\ell}\omega_m\binom{n-2e}{\ell-e}\!\binom{n-2e}{m-e}}}{2^{n-2e}\kappa_e\sqrt{|Y_{\ell}||Y_m|}}\breve{E}_e^{\ell,m}|_{Y_{\ell}\times Y_m}, \label{restriction lm2}
\end{align}
where $|_{Y_{\ell}\times Y_{\ell}}$ etc. mean taking corresponding submatrices.
From \eqref{restriction l2} and \eqref{restriction m2} it follows that the matrices $(\kappa_e)^{-1} H_e^{\ell}(H_e^{\ell})^{\dagger}, (\kappa_e)^{-1} H_e^m(H_e^m)^{\dagger}$ are also non-zero, since each of $\breve{E}_e^{\ell,\ell}|_{Y_{\ell}\times Y_{\ell}},\breve{E}_e^{m,m}|_{Y_m\times Y_m}$ has non-zero constant diagonal entries by \eqref{Q}.

Let $\bm{H}'$ be the set consisting of the $|Y|\times|Y|$ matrices of the form
\begin{equation*}
	\left[\begin{array}{c|c} \sum_{r=0}^e a_r^{\ell,\ell}\frac{1}{\kappa_r^{\ell}} H_r^{\ell}(H_r^{\ell})^{\dagger} & \sum_{r=0}^{e-1}a_r^{\ell,m}\frac{1}{\kappa_r} H_r^{\ell}(H_r^m)^{\dagger} \\[3pt] \hline \rule{0pt}{11pt} \sum_{r=0}^{e-1}a_r^{m,\ell}\frac{1}{\kappa_r} H_r^m(H_r^{\ell})^{\dagger} & \sum_{r=0}^e a_r^{m,m}\frac{1}{\kappa_r^m} H_r^m(H_r^m)^{\dagger} \end{array}\right],
\end{equation*}
where $a_r^{\ell,\ell}$ etc.~are in $\mathbb{C}$, and we are again using the notation \eqref{temporary notation}.
By \eqref{idempotents l1}--\eqref{idempotents lm5} and the above comments, $\bm{H}'$ is a $\mathbb{C}$-algebra with
\begin{equation}\label{dim H'}
	\dim\bm{H}'=4e+2.
\end{equation}

Define
\begin{equation*}
	S_{\ell,\ell}(Y)=\bigl\{j:R_j\cap(Y_{\ell}\times Y_{\ell})\ne\emptyset\bigr\},
\end{equation*}
and define $S_{\ell,m}(Y)(=S_{m,\ell}(Y))$ and $S_{m,m}(Y)$ in the same manner.
Let $\bm{H}$ be the set consisting of the $|Y|\times|Y|$ matrices of the form
\begin{equation}\label{H}
	\left[\begin{array}{c|c} \sum_{j\in S_{\ell,\ell}(Y)} b_j^{\ell,\ell}A_j|_{Y_{\ell}\times Y_{\ell}} & \sum_{j\in S_{\ell,m}(Y)} b_j^{\ell,m}A_j|_{Y_{\ell}\times Y_m} \\[3pt] \hline \rule{0pt}{11pt} \sum_{j\in S_{m,\ell}(Y)} b_j^{m,\ell}A_j|_{Y_m\times Y_{\ell}} & \sum_{j\in S_{m,m}(Y)} b_j^{m,m}A_j|_{Y_m\times Y_m} \end{array}\right],
\end{equation}
where $b_j^{\ell,\ell}$ etc.~are in $\mathbb{C}$.
Then $\bm{H}$ is a $\mathbb{C}$-vector space with
\begin{equation}\label{dim H}
	\dim\bm{H}=|S_{\ell,\ell}(Y)|+|S_{\ell,m}(Y)|+|S_{m,\ell}(Y)|+|S_{m,m}(Y)|.
\end{equation}
Note that $\bm{H}$ is closed under $\circ$.
By \eqref{restriction l1}--\eqref{restriction lm1} and Proposition \ref{triple regularity} (or \eqref{Q}), $\bm{H}'$ is a subspace of $\bm{H}$.

By \eqref{Q}, \eqref{idempotents l2}, \eqref{restriction l1}, and \eqref{restriction l2}, we have
\begin{align}
	I_{|Y_{\ell}|}&=\sum_{r=0}^{e-1} \frac{\binom{n}{\ell}}{|Y_{\ell}|}\breve{E}_r^{\ell,\ell}|_{Y_{\ell}\times Y_{\ell}}+\frac{\omega_{\ell}\binom{n-2e}{\ell-e}}{2^{n-2e}\kappa_e|Y_{\ell}|}\breve{E}_e^{\ell,\ell}|_{Y_{\ell}\times Y_{\ell}} \label{condition on diagonal block} \\
	&=\frac{1}{|Y_{\ell}|}\!\sum_{\xi=0}^{\min\{\ell,n-\ell\}}\!\!\left( \rule{0pt}{22pt} \sum_{r=0}^{e-1}\left(\!\binom{n}{r}\!-\!\binom{n}{r-1}\!\right)\hypergeometricseries{3}{2}{-\xi,-r,r-n-1}{\ell-n, -\ell}{1} \right. \notag \\
	& \qquad \qquad \left. +\frac{\omega_{\ell}\binom{n-2e}{\ell-e}\!\!\left(\!\binom{n}{e}\!-\!\binom{n}{e-1}\!\right)}{2^{n-2e}\kappa_e\binom{n}{\ell}} \hypergeometricseries{3}{2}{-\xi,-e,e-n-1}{\ell-n, -\ell}{1} \!\!\right)\! A_{2\xi}|_{Y_{\ell}\times Y_{\ell}}. \notag
\end{align}
Hence it follows that $\{\xi\ne 0:2\xi\in S_{\ell,\ell}(Y)\}$ is a set of zeros of the polynomial
\begin{align}
	\psi_e^{\ell,\ell}(\xi)=&\,\sum_{r=0}^{e-1}\left(\!\binom{n}{r}\!-\!\binom{n}{r-1}\!\right)\hypergeometricseries{3}{2}{-\xi,-r,r-n-1}{\ell-n,-\ell}{1} \label{Wilson l} \\
	& +\frac{\omega_{\ell}\binom{n-2e}{\ell-e}\!\!\left(\!\binom{n}{e}\!-\!\binom{n}{e-1}\!\right)}{2^{n-2e}\kappa_e\binom{n}{\ell}} \hypergeometricseries{3}{2}{-\xi,-e,e-n-1}{\ell-n, -\ell}{1} \in\mathbb{R}[\xi]. \notag
\end{align}
Note that $\psi_e^{\ell,\ell}(\xi)$ has degree exactly $e$, from which it follows that
\begin{equation}\label{Sll}
	|S_{\ell,\ell}(Y)|\leqslant e+1.
\end{equation}
Likewise, we find that $\{\xi\ne 0:2\xi\in S_{m,m}(Y)\}$ is a set of zeros of the polynomial
\begin{align}
	\psi_e^{m,m}(\xi)=&\,\sum_{r=0}^{e-1}\left(\!\binom{n}{r}\!-\!\binom{n}{r-1}\!\right)\hypergeometricseries{3}{2}{-\xi,-r,r-n-1}{m-n,-m}{1} \label{Wilson m} \\
	& +\frac{\omega_m\binom{n-2e}{m-e}\!\!\left(\!\binom{n}{e}\!-\!\binom{n}{e-1}\!\right)}{2^{n-2e}\kappa_e\binom{n}{m}} \hypergeometricseries{3}{2}{-\xi,-e,e-n-1}{m-n, -m}{1} \in\mathbb{R}[\xi], \notag
\end{align}
and hence that
\begin{equation}\label{Smm}
	|S_{m,m}(Y)|\leqslant e+1.
\end{equation}
Finally, by \eqref{Q}, \eqref{idempotents lm1}, and \eqref{restriction lm2}, we have
\begin{align*}
	O&=\frac{\sqrt{\omega_{\ell}\omega_m\binom{n-2e}{\ell-e}\!\binom{n-2e}{m-e}}}{2^{n-2e}\kappa_e\sqrt{|Y_{\ell}||Y_m|}}\breve{E}_e^{\ell,m}|_{Y_{\ell}\times Y_m} \\
	&=\frac{\sqrt{\omega_{\ell}\omega_m}\binom{n-m}{e}\!\binom{n-2e}{\ell-e}\!\!\left(\!\binom{n}{e}\!-\!\binom{n}{e-1}\!\right)}{2^{n-2e}\kappa_e\sqrt{|Y_{\ell}||Y_m|}\,\binom{n}{\ell}\!\binom{n-\ell}{e}} \\
	& \qquad \times \sum_{\xi=0}^{\min\{\ell,n-m\}}\!\!\!\!\hypergeometricseries{3}{2}{-\xi,-e,e-n-1}{m-n, -\ell}{1} \! A_{2\xi+m-\ell}|_{Y_{\ell}\times Y_m}.
\end{align*}
Hence it follows that $\{\xi:2\xi+m-\ell\in S_{\ell,m}(Y)\}$ is a set of zeros of the polynomial
\begin{equation}\label{Wilson lm}
	\psi_e^{\ell,m}(\xi)=\hypergeometricseries{3}{2}{-\xi,-e,e-n-1}{m-n, -\ell}{1}\in\mathbb{R}[\xi],
\end{equation}
and that
\begin{equation}\label{Slm}
	|S_{\ell,m}(Y)|=|S_{m,\ell}(Y)|\leqslant e.
\end{equation}
By \eqref{dim H}, \eqref{Sll}, \eqref{Smm}, and \eqref{Slm}, we have
\begin{equation*}
	\dim\bm{H}\leqslant 4e+2.
\end{equation*}
Since $\bm{H}'$ is a subspace of $\bm{H}$, it follows from \eqref{dim H'} that $\bm{H}=\bm{H}'$.
In particular, $\bm{H}$ is a $\mathbb{C}$-algebra.
It is also clear that $\bm{H}$ is closed under $\dagger$ and contains $J_{|Y|}$.
We now conclude that $\bm{H}$ is a coherent algebra.
Note also that equality holds in each of \eqref{Sll}, \eqref{Smm}, and \eqref{Slm}.

To summarize:

\begin{thm}\label{counterpart of Wilson's theorem}
Recall Assumption \ref{assumption on Y}.
With the above notation, the following hold:
\begin{enumerate}
\item The set $\bm{H}$ from \eqref{H} is a coherent algebra of type $\left[\begin{smallmatrix} e+1 & e \\ e & e+1 \end{smallmatrix}\right]$.
\item The sets of zeros of the polynomials $\psi_e^{\ell,\ell}(\xi),\psi_e^{m,m}(\xi)$, and $\psi_e^{\ell,m}(\xi)$ from \eqref{Wilson l}, \eqref{Wilson m}, and \eqref{Wilson lm} are given respectively by
\begin{gather*}
	\{\xi\ne 0:2\xi\in S_{\ell,\ell}(Y)\}, \ \{\xi\ne 0:2\xi\in S_{m,m}(Y)\}, \ \text{and} \\ \{\xi:2\xi+m-\ell\in S_{\ell,m}(Y)\}.
\end{gather*}
In particular, the zeros of these polynomials are integral.
\end{enumerate}
\end{thm}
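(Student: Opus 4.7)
My plan is to establish the theorem by forcing the inclusion $\bm{H}'\subseteq\bm{H}$ to be an equality via a two-sided dimension bound, and then reading off part (ii) from this equality together with degree considerations. First I would verify that the block matrices $(\kappa_r^{\ell})^{-1}H_r^{\ell}(H_r^{\ell})^{\dagger}$ and $(\kappa_r^m)^{-1}H_r^m(H_r^m)^{\dagger}$ ($0\leqslant r\leqslant e$), together with $(\kappa_r)^{-1}H_r^{\ell}(H_r^m)^{\dagger}$ and their transposes (for $0\leqslant r<e$), form a linearly independent spanning set of $\bm{H}'$, giving $\dim\bm{H}'=4e+2$ as in \eqref{dim H'}; the relations \eqref{idempotents l1}--\eqref{idempotents lm5} simultaneously show that $\bm{H}'$ is closed under ordinary matrix multiplication. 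The containment $\bm{H}'\subseteq\bm{H}$ then follows from the identifications \eqref{restriction l1}--\eqref{restriction lm1}, which exhibit each building block of $\bm{H}'$ as a scalar multiple of a restricted matrix unit $\breve{E}_r^{i,j}|_{\cdots}$, combined with the expansion \eqref{Q} of each $\breve{E}_r^{i,j}$ in terms of adjacency matrices $A_j$.

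Next I would bound $\dim\bm{H}\leqslant 4e+2$ via \eqref{dim H} by constraining each of $|S_{\ell,\ell}(Y)|$, $|S_{m,m}(Y)|$, $|S_{\ell,m}(Y)|$. For the first, I would apply \eqref{idempotents l2} to express $(\kappa_e)^{-1}H_e^{\ell}(H_e^{\ell})^{\dagger}$ via $I_{|Y_{\ell}|}$ minus the lower blocks; translating through \eqref{restriction l1}, \eqref{restriction l2}, and \eqref{Q} yields the expansion \eqref{condition on diagonal block} of $I_{|Y_{\ell}|}$ in which the coefficient of $A_{2\xi}|_{Y_{\ell}\times Y_{\ell}}$ equals $|Y_{\ell}|^{-1}\psi_e^{\ell,\ell}(\xi)$. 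This forces every nonzero $\xi$ with $2\xi\in S_{\ell,\ell}(Y)$ to be a zero of the degree-$e$ polynomial $\psi_e^{\ell,\ell}$ from \eqref{Wilson l}, proving $|S_{\ell,\ell}(Y)|\leqslant e+1$. The same strategy, using \eqref{idempotents m2} for the $(m,m)$ block and \eqref{idempotents lm1} for the off-diagonal block, delivers $|S_{m,m}(Y)|\leqslant e+1$ and $|S_{\ell,m}(Y)|\leqslant e$ via the polynomials \eqref{Wilson m} and \eqref{Wilson lm}.

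Combining $\bm{H}'\subseteq\bm{H}$ with the matching dimension bounds forces $\bm{H}=\bm{H}'$, which transfers closure under matrix multiplication from $\bm{H}'$ to $\bm{H}$; as $\bm{H}$ is visibly closed under $\circ$ and $^{\dagger}$ and contains $J_{|Y|}$ by construction, $\bm{H}$ is a coherent algebra. Its fibers are the two shells $Y_{\ell}$ and $Y_m$ by the block structure of \eqref{H}, and the equality of dimensions simultaneously forces equality in \eqref{Sll}, \eqref{Smm}, and \eqref{Slm}, giving the type $\left[\begin{smallmatrix} e+1 & e \\ e & e+1 \end{smallmatrix}\right]$, which settles (i). Part (ii) then follows by counting: the set $\{\xi\ne 0 : 2\xi\in S_{\ell,\ell}(Y)\}$ has cardinality $e$, matching the degree of $\psi_e^{\ell,\ell}$, so the containment in its zero set is an equality; the arguments for $\psi_e^{m,m}$ and $\psi_e^{\ell,m}$ are identical, and integrality is then immediate.

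I expect the main obstacle to be not in any single step but in the bookkeeping in the second paragraph: one must track the rescaling constants carefully when expanding $(\kappa_e)^{-1}H_e^{\ell}(H_e^{\ell})^{\dagger}$ etc.\ through \eqref{Q} so that the polynomials produced match \emph{exactly} the normalized forms $\psi_e^{\ell,\ell}$, $\psi_e^{m,m}$, $\psi_e^{\ell,m}$ declared in \eqref{Wilson l}--\eqref{Wilson lm}; any off-by-a-scalar error there does not affect the dimension bound but spoils the explicit identification of zero sets needed for (ii). The auxiliary input that $\omega$ is constant on each of $Y_{\ell}$ and $Y_m$ is required so that $D$ restricts cleanly as in \eqref{values of omega}, and this is supplied upstream by Theorem~\ref{constant on fiber}.
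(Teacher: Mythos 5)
Your proposal is correct and follows essentially the same route as the paper: the theorem is stated there as a summary of exactly this argument — the dimension count $\dim\bm{H}'=4e+2$ from the idempotent relations \eqref{idempotents l1}--\eqref{idempotents lm5}, the inclusion $\bm{H}'\subseteq\bm{H}$ via \eqref{restriction l1}--\eqref{restriction lm2} and \eqref{Q}, the upper bounds \eqref{Sll}, \eqref{Smm}, \eqref{Slm} from the polynomials $\psi_e^{\ell,\ell}$, $\psi_e^{m,m}$, $\psi_e^{\ell,m}$, and the resulting forced equalities. Your identification of the delicate point (tracking the normalization constants so that the coefficient of $A_{2\xi}|_{Y_{\ell}\times Y_{\ell}}$ in \eqref{condition on diagonal block} is exactly $|Y_{\ell}|^{-1}\psi_e^{\ell,\ell}(\xi)$) and of the role of Theorem \ref{constant on fiber} matches the paper's treatment.
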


Concerning the scalars $\omega_{\ell}$ and $\omega_m$ appearing in the polynomials $\psi_e^{\ell,\ell}(\xi)$ and $\psi_e^{m,m}(\xi)$, it follows that

\begin{prop}\label{ratio of weights}
Recall Assumption \ref{assumption on Y}.
The scalars $\omega_{\ell}$ and $\omega_m$ satisfies
\begin{equation*}
	\frac{\omega_m}{\omega_{\ell}}=\frac{\binom{n}{m}\binom{n-2e}{\ell-e}}{\binom{n}{\ell}\binom{n-2e}{m-e}}\cdot\frac{|Y_m|-\binom{n}{e-1}}{|Y_{\ell}|-\binom{n}{e-1}}.
\end{equation*}
In particular, the weight function $\omega$ is unique up to a scalar multiple.
\end{prop}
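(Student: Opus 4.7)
The plan is to extract the ratio $\omega_m/\omega_\ell$ from the diagonal part of equation \eqref{condition on diagonal block}, which already encodes all the algebraic relations needed. Equation \eqref{condition on diagonal block} gives an expansion of $I_{|Y_\ell|}$ as a linear combination of the matrices $A_{2\xi}|_{Y_\ell\times Y_\ell}$. The term with $\xi=0$ is $A_0|_{Y_\ell\times Y_\ell}=I_{|Y_\ell|}$, while all other terms $A_{2\xi}|_{Y_\ell\times Y_\ell}$ with $\xi\geqslant 1$ have zero diagonal. So the first step is to compare the diagonal entries on both sides, i.e., isolate the coefficient of $A_0|_{Y_\ell\times Y_\ell}$ and set it equal to $|Y_\ell|$.

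Next, I would evaluate $\hypergeometricseries{3}{2}{-\xi,-r,r-n-1}{\ell-n,-\ell}{1}$ at $\xi=0$, which is just $1$ since the hypergeometric sum collapses to its $i=0$ term. Plugging $\xi=0$ into the sum in \eqref{condition on diagonal block} and using the telescoping identity
\begin{equation*}
\sum_{r=0}^{e-1}\left(\binom{n}{r}-\binom{n}{r-1}\right) = \binom{n}{e-1},
\end{equation*}
I obtain
\begin{equation*}
|Y_\ell| - \binom{n}{e-1} = \frac{\omega_\ell\binom{n-2e}{\ell-e}\!\left(\binom{n}{e}-\binom{n}{e-1}\right)}{2^{n-2e}\kappa_e\binom{n}{\ell}}.
\end{equation*}
Repeating the same argument starting from the analog of \eqref{condition on diagonal block} for the block $Y_m\times Y_m$ (which comes from \eqref{idempotents m2}, \eqref{restriction m1}, \eqref{restriction m2}, and \eqref{Q}), I get
\begin{equation*}
|Y_m| - \binom{n}{e-1} = \frac{\omega_m\binom{n-2e}{m-e}\!\left(\binom{n}{e}-\binom{n}{e-1}\right)}{2^{n-2e}\kappa_e\binom{n}{m}}.
\end{equation*}

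Then I simply divide the two equations, so that the common factor $2^{n-2e}\kappa_e\!\left(\binom{n}{e}-\binom{n}{e-1}\right)$ cancels, yielding the desired identity
\begin{equation*}
\frac{\omega_m}{\omega_\ell}=\frac{\binom{n}{m}\binom{n-2e}{\ell-e}}{\binom{n}{\ell}\binom{n-2e}{m-e}}\cdot\frac{|Y_m|-\binom{n}{e-1}}{|Y_\ell|-\binom{n}{e-1}}.
\end{equation*}
For the uniqueness claim, I invoke Theorem \ref{constant on fiber} (noting that $X_\ell$ and $X_m$ are fibers of the coherent closure of $\bm{T}$, which is just $\bm{T}$ itself by Proposition \ref{triple regularity}), so $\omega$ takes only two values, namely $\omega_\ell/|Y_\ell|$ on $Y_\ell$ and $\omega_m/|Y_m|$ on $Y_m$. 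Since $Y_\ell,Y_m$, and the ratio $\omega_m/\omega_\ell$ are all determined by $Y$, the weight $\omega$ is determined by $Y$ up to an overall positive scalar.

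There is no real obstacle here: the whole argument is a diagonal-entry bookkeeping calculation that rides on the machinery already assembled in this section (especially \eqref{condition on diagonal block} and Theorem \ref{constant on fiber}). The only minor care needed is to recognize that the only $\xi$-term contributing to the diagonal of the RHS of \eqref{condition on diagonal block} is $\xi=0$, and to perform the telescoping sum cleanly.
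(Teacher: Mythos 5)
Your proposal is correct and follows essentially the same route as the paper: comparing diagonal entries in \eqref{condition on diagonal block} (and its analogue for $Y_m\times Y_m$) to get $|Y_{\ell}|=\psi_e^{\ell,\ell}(0)$ and $|Y_m|=\psi_e^{m,m}(0)$, then eliminating $\kappa_e$, with uniqueness following from \eqref{values of omega}. The extra details you supply (only the $\xi=0$ term contributes to the diagonal, the ${}_3F_2$ collapses to $1$ at $\xi=0$, and the telescoping sum equals $\binom{n}{e-1}$) are exactly what the paper's terser computation implicitly uses.
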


\begin{proof}
By comparing the diagonal entries of both sides in \eqref{condition on diagonal block}, we have
\begin{equation*}
	1=\frac{\psi_e^{\ell,\ell}(0)}{|Y_{\ell}|}=\frac{1}{|Y_{\ell}|}\left(\! \binom{n}{e-1}+\frac{\omega_{\ell}\binom{n-2e}{\ell-e}\!\!\left(\!\binom{n}{e}\!-\!\binom{n}{e-1}\!\right)}{2^{n-2e}\kappa_e\binom{n}{\ell}}\right).
\end{equation*}
Likewise,
\begin{equation*}
	1=\frac{\psi_e^{m,m}(0)}{|Y_m|}=\frac{1}{|Y_m|}\left(\! \binom{n}{e-1}+\frac{\omega_m\binom{n-2e}{m-e}\!\!\left(\!\binom{n}{e}\!-\!\binom{n}{e-1}\!\right)}{2^{n-2e}\kappa_e\binom{n}{m}}\right).
\end{equation*}
By eliminating $\kappa_e$, we obtain the formula for $\omega_m(\omega_{\ell})^{-1}$.
The uniqueness of $\omega$ follows from this and \eqref{values of omega}.
\end{proof}

\begin{exam}
Suppose that $e=1$.
In this case, Theorem \ref{counterpart of Wilson's theorem}\,(i) was previously obtained by Bannai, Bannai, and Bannai \cite[Theorem 2.2\,(i)]{BBB2014DM}.
Moreover, Theorem \ref{counterpart of Wilson's theorem}\,(ii) and Proposition \ref{ratio of weights} are together equivalent to \cite[Proposition 4.3]{BBB2014DM}.
\end{exam}

\begin{exam}
Suppose that $e=2$.
Then we have
\begin{align*}
	\psi_e^{\ell,m}(\xi)&=1+\frac{(-\xi)(-2)(1-n)}{(m-n)(-\ell)}+\frac{(-\xi)(1-\xi)(-2)(-1)(1-n)(2-n)}{(m-n)(m-n+1)(-\ell)(1-\ell)2} \\
	&=1-\frac{2(n-1)\xi}{(n-m)\ell}+\frac{(n-1)(n-2)\xi(\xi-1)}{(n-m)(n-m-1)\ell(\ell-1)}.
\end{align*}
From Example \ref{tight relative 4-designs} we find two parameter sets satisfying Assumption \ref{assumption on Y}:
\begin{center}
\begin{tabular}{>{$}c<{$}>{$}c<{$}>{$}c<{$}>{$}c<{$}}
	\hline\hline
	n & \ell & m & \xi \\
	\hline
	22 & 6 & 7 & 3,5 \\
	\hline
	22 & 6 & 15 & 1,3 \\
	\hline\hline
\end{tabular}
\end{center}
The zeros $\xi$ given in the last column are indeed integers.
Note that the other two parameter sets in Example \ref{tight relative 4-designs} correspond to the complements of these two; cf.~Lemma \ref{complement}.
On the other hand, the existence of tight relative $4$-designs with the following feasible parameter sets was left open in \cite[Section 6]{BBZ2017DCC}:
\begin{center}
\begin{tabular}{>{$}c<{$}>{$}c<{$}>{$}c<{$}>{$}c<{$}}
	\hline\hline
	n & \ell & m & \xi \\
	\hline
	37 & 9 & 16 & \rule{0pt}{10pt} \frac{1}{14} (71\pm\sqrt{337}) \\[1pt]
	\hline
	37 & 9 & 21 & \rule{0pt}{10pt} \frac{1}{14} (55\pm\sqrt{337}) \\[1pt]
	\hline
	41 & 15 & 16 & \rule{0pt}{10pt} \frac{1}{26} (237\pm\sqrt{1569}) \\[1pt]
	\hline
	41 & 15 & 25 & \rule{0pt}{10pt} \frac{1}{26} (153\pm\sqrt{1569}) \\[1pt]
	\hline\hline
\end{tabular}
\end{center}
Here, we are again taking Lemma \ref{complement} into account.
Observe that the zeros $\xi$ are irrational, thus proving the non-existence.
\end{exam}

We end this section with a comment on the expressions of the polynomials $\psi_e^{\ell,\ell}(\xi)$ and $\psi_e^{m,m}(\xi)$.
We first invoke the following identity which agrees with the formula of the backward shift operator on the dual Hahn polynomials (cf.~\cite[Section 1.6]{KS1998R}):
\begin{align}
	\alpha(N+1)(\alpha &+\beta+2r) Q_r(\xi;\alpha-1,\beta,N+1) \label{backward shift operator for dual Hahn} \\
	=&\, (\alpha+r)(\alpha+\beta+r)(N+1-r)Q_r(\xi-1;\alpha,\beta,N) \notag \\
	& \quad -r(\alpha+\beta+N+1+r)(\beta+r)Q_{r-1}(\xi-1;\alpha,\beta,N). \notag
\end{align}
This can be routinely verified by writing the LHS as a linear combination of the polynomials $(1-\xi)_i$ $(0\leqslant i\leqslant r)$ using
\begin{equation*}
	(-\xi)_i=(1-\xi)_i-i(1-\xi)_{i-1},
\end{equation*}
and then comparing the coefficients of both sides.
Setting $\alpha=\ell-n$, $\beta=-\ell-1$, and $N=\ell-1$ in \eqref{backward shift operator for dual Hahn}, it follows that the first term of the RHS in \eqref{Wilson l} is rewritten as follows:
\begin{align*}
	\sum_{r=0}^{e-1} & \left(\!\binom{n}{r}\!-\!\binom{n}{r-1}\!\right)\hypergeometricseries{3}{2}{-\xi,-r,r-n-1}{\ell-n,-\ell}{1} \\
	&= \sum_{r=0}^{e-1} \frac{n!(n-2r+1)}{r!(n-r+1)!}Q_r(\xi;\alpha-1,\beta,N+1) \\
	 &= \frac{n!}{\ell(n-\ell)}\sum_{r=0}^{e-1}\left(\frac{(\ell-n+r)(r-n-1)(\ell-r)}{r!(n-r+1)!}Q_r(\xi-1;\alpha,\beta,N)\right. \\
	 & \qquad\qquad\qquad\qquad \left. -\frac{r(r+\ell-n-1)(r-\ell-1)}{r!(n-r+1)!}Q_{r-1}(\xi-1;\alpha,\beta,N) \right) \\
	 &= \frac{n!}{\ell(n-\ell)}\cdot(-1)\frac{(\ell-n+e-1)(\ell-e+1)}{(e-1)!(n-e+1)!}Q_{e-1}(\xi-1;\alpha,\beta,N) \\
	 &= \binom{n}{e-1}\frac{(n-\ell-e+1)(\ell-e+1)}{\ell(n-\ell)} \hypergeometricseries{3}{2}{1-\xi,1-e,e-n-1}{\ell-n+1, 1-\ell}{1}.
\end{align*}
Likewise, the first term of the RHS in \eqref{Wilson m} is given by
\begin{align*}
	\sum_{r=0}^{e-1} & \left(\!\binom{n}{r}\!-\!\binom{n}{r-1}\!\right)\hypergeometricseries{3}{2}{-\xi,-r,r-n-1}{m-n,-m}{1} \\
	&= \binom{n}{e-1}\frac{(n-m-e+1)(m-e+1)}{m(n-m)} \hypergeometricseries{3}{2}{1-\xi,1-e,e-n-1}{m-n+1, 1-m}{1}.
\end{align*}

\section{Zeros of the Hahn and Hermite polynomials}\label{sec: zeros}

Recall the Hahn polynomials $Q_r(\xi;\alpha,\beta,N)$ from \eqref{Hahn}.
Recall also that the zeros of orthogonal polynomials are always real and simple; see, e.g., \cite[Theorem 3.3.1]{Szego1975B}.
It is well known that we can obtain the Hermite polynomials as limits of the Hahn polynomials; cf.~\cite{KLS2010B,KS1998R}.
In this section, we revisit this limit process and describe the limit behavior of the zeros of the $Q_r(\xi;\alpha,\beta,N)$, in a special case which is suited to our purpose.

\begin{assump}\label{limit setting}
Throughout this section, we assume that $\alpha<-N$ and $\beta<-N$, so that the $Q_r(\xi;\alpha,\beta,N)$ satisfy the orthogonality relation \eqref{orthogonality}.
We consider the following limit:
\begin{equation*}
	\epsilon:=-\frac{\alpha+\beta}{\sqrt{\alpha\beta N}}\rightarrow +0.
\end{equation*}
We write
\begin{equation*}
	\alpha=\frac{\alpha_{\epsilon}}{\epsilon^2}, \qquad \beta=\frac{\beta_{\epsilon}}{\epsilon^2}, \qquad N=\frac{N_{\epsilon}}{\epsilon^2},
\end{equation*}
and assume further that
\begin{equation*}
	\lim_{\epsilon\rightarrow +0}\frac{N_{\epsilon}}{\alpha_{\epsilon}+\beta_{\epsilon}}=0, \qquad \lim_{\epsilon\rightarrow +0}\frac{\beta_{\epsilon}}{\alpha_{\epsilon}+\beta_{\epsilon}}=\rho\in[0,1].
\end{equation*}
\end{assump}

\begin{rem}
We do not require in Assumption \ref{limit setting} that $\alpha_{\epsilon},\beta_{\epsilon}$, and $N_{\epsilon}$ are uniquely determined by $\epsilon$.
In other words, these are multi-valued functions of $\epsilon$ in general (for admissible values of $\epsilon$), but their limit behaviors are uniformly governed by $\epsilon$.
\end{rem}

With reference to Assumption \ref{limit setting}, observe that
\begin{equation*}
	\lim_{\epsilon\rightarrow +0}\alpha_{\epsilon}=\lim_{\epsilon\rightarrow +0}\alpha \epsilon^2=\lim_{\epsilon\rightarrow +0}\frac{\alpha_{\epsilon}+\beta_{\epsilon}}{\beta_{\epsilon}}\cdot\frac{\alpha_{\epsilon}+\beta_{\epsilon}}{N_{\epsilon}}=-\infty.
\end{equation*}
Likewise, we have
\begin{equation*}
	\lim_{\epsilon\rightarrow +0}\beta_{\epsilon}=-\infty, \qquad \lim_{\epsilon\rightarrow +0}N_{\epsilon}=\frac{1}{\rho(1-\rho)} \in[4,\infty].
\end{equation*}

We will work with the \emph{normalized} (or \emph{monic}) Hahn polynomials:
\begin{equation}\label{monic Hahn}
	q_r(\xi)=q_r(\xi;\epsilon)=\frac{(\alpha+1)_r(-N)_r}{(r+\alpha+\beta+1)_r}Q_r(\xi;\alpha,\beta,N).
\end{equation}
Their recurrence relation is given by (cf.~\cite[Section 1.5]{KS1998R})
\begin{equation}\label{Hahn recurrence}
	\xi q_r(\xi)=q_{r+1}(\xi)+(a_r+b_r)q_r(\xi)+a_{r-1}b_rq_{r-1}(\xi),
\end{equation}
where $q_{-1}(\xi):=0$, and
\begin{align*}
	a_r&=\frac{(r+\alpha+\beta+1)(r+\alpha+1)(N-r)}{(2r+\alpha+\beta+1)(2r+\alpha+\beta+2)}, \\
	b_r&=\frac{r(r+\alpha+\beta+N+1)(r+\beta)}{(2r+\alpha+\beta)(2r+\alpha+\beta+1)}.
\end{align*}

For convenience, let
\begin{equation*}
	\lambda_{\epsilon}=\sqrt{\frac{2(\alpha_{\epsilon}+\beta_{\epsilon}+N_{\epsilon})}{\alpha_{\epsilon}+\beta_{\epsilon}}}.
\end{equation*}
Note that
\begin{equation}\label{lambda_zero}
	\lim_{\epsilon\rightarrow +0}\lambda_{\epsilon}=\sqrt{2}.
\end{equation}
Consider the polynomial $\tilde{q}_r(\eta;\epsilon)$ in the new indeterminate $\eta$ defined by
\begin{equation*}
	\tilde{q}_r(\eta)=\tilde{q}_r(\eta;\epsilon)=q_r\!\left(\frac{\lambda_{\epsilon}\eta}{\epsilon}+\frac{\alpha_{\epsilon}N_{\epsilon}}{(\alpha_{\epsilon}+\beta_{\epsilon})\epsilon^2}\right)\!\cdot\frac{\epsilon^r}{(\lambda_{\epsilon})^r} \in \mathbb{R}[\eta].
\end{equation*}
Note that $\tilde{q}_r(\eta)$ is also monic with degree $r$ in $\eta$.
Then \eqref{Hahn recurrence} becomes
\begin{equation}\label{modified recurrence}
	\eta\tilde{q}_r(\eta)=\tilde{q}_{r+1}(\eta)+\frac{1}{\lambda_{\epsilon}}\!\left((a_r+b_r)\epsilon-\frac{\alpha_{\epsilon}N_{\epsilon}}{(\alpha_{\epsilon}+\beta_{\epsilon})\epsilon}\right)\!\tilde{q}_r(\eta)+\frac{a_{r-1}b_r\epsilon^2}{(\lambda_{\epsilon})^2}\tilde{q}_{r-1}(\eta).
\end{equation}
It is a straightforward matter to show that
\begin{gather}
	\frac{1}{\lambda_{\epsilon}}\!\left((a_r+b_r)\epsilon-\frac{\alpha_{\epsilon}N_{\epsilon}}{(\alpha_{\epsilon}+\beta_{\epsilon})\epsilon}\right) = -(\mu_{\epsilon}+r\sigma_{\epsilon})\epsilon +O(\epsilon^3), \\
	\frac{a_{r-1}b_r\epsilon^2}{(\lambda_{\epsilon})^2} = \frac{r}{2} +O(\epsilon^2),
\end{gather}
where
\begin{equation*}
	\mu_{\epsilon}:=\frac{(\alpha_{\epsilon}-\beta_{\epsilon})N_{\epsilon}}{\lambda_{\epsilon} (\alpha_{\epsilon}+\beta_{\epsilon})^2}, \qquad \sigma_{\epsilon}:=\frac{(\alpha_{\epsilon}-\beta_{\epsilon})(\alpha_{\epsilon}+\beta_{\epsilon}+2N_{\epsilon})}{\lambda_{\epsilon} (\alpha_{\epsilon}+\beta_{\epsilon})^2}
\end{equation*}
are convergent:
\begin{equation}
	\lim_{\epsilon\rightarrow +0}\mu_{\epsilon}=0, \qquad \lim_{\epsilon\rightarrow +0}\sigma_{\epsilon}=\frac{1-2\rho}{\sqrt{2}}.
\end{equation}

Recall the \emph{Hermite polynomials} \cite[Section 1.13]{KS1998R}
\begin{equation*}
	H_r(\eta)=(2\eta)^r \hypergeometricseries{2}{0}{-r/2,-(r-1)/2}{-}{-\frac{1}{\eta^2}}\in\mathbb{R}[\eta] \quad (r=0,1,2,\ldots).
\end{equation*}
Their normalized recurrence relation is given by
\begin{equation}\label{Hermite recurrence}
	\eta h_r(\eta)=h_{r+1}(\eta)+\frac{r}{2}h_{r-1}(\eta),
\end{equation}
where
\begin{equation}\label{monic Hermite}
	h_r(\eta)=\frac{H_r(\eta)}{2^r},
\end{equation}
and $h_{-1}(\eta):=0$.
We also note that
\begin{equation}\label{derivative of h}
	\frac{d h_r}{d\eta}(\eta)=rh_{r-1}(\eta),
\end{equation}
and that
\begin{equation}\label{Hermite is symmetric}
	h_r(-\eta)=(-1)^rh_r(\eta).
\end{equation}
Since $\tilde{q}_0(\eta)=h_0(\eta)=1$, it follows from \eqref{modified recurrence}--\eqref{Hermite recurrence} that
\begin{equation}\label{constant term}
	\lim_{\epsilon\rightarrow+0}\tilde{q}_r(\eta;\epsilon)=h_r(\eta)
\end{equation}
in the sense of coefficient-wise convergence.

We now set
\begin{equation*}
	\tilde{q}_r(\eta;0)=h_r(\eta),
\end{equation*}
and discuss partial derivatives of $\tilde{q}_r(\eta;\epsilon)$ as a bivariate function of $\eta$ and $\epsilon$.
First, it follows from \eqref{derivative of h} and \eqref{constant term} that
\begin{equation}\label{partial derivative wrt eta}
	\lim_{\epsilon\rightarrow+0}\frac{\partial\tilde{q}_r}{\partial\eta}(\eta; \epsilon)=\frac{dh_r}{d\eta}(\eta)=rh_{r-1}(\eta).
\end{equation}
Concerning the partial differentiability of $\tilde{q}_r(\eta;\epsilon)$ with respect to $\epsilon$, it follows that

\begin{lem}\label{partial derivative wrt epsilon}
The function $\tilde{q}_r(\eta;\epsilon)$ is partially right differentiable with respect to $\epsilon$ at $(\eta,0)$, and we have
\begin{equation*}
	\frac{\partial\tilde{q}_r}{\partial\epsilon}(\eta;0) =  \frac{r(1-2\rho)}{3\sqrt{2}} \!\left( (r-1+\eta^2) h_{r-1}(\eta) -\eta h_r(\eta) \right).
\end{equation*}
\end{lem}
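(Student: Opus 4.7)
My plan is to (i) establish partial right-differentiability of $\tilde{q}_r(\eta;\epsilon)$ with respect to $\epsilon$ at $\epsilon=0$, (ii) derive a three-term recurrence for $g_r(\eta):=\partial\tilde{q}_r/\partial\epsilon(\eta;0)$, and (iii) verify that the claimed closed form satisfies the same recurrence. Rewriting \eqref{modified recurrence} as
\[
	\tilde{q}_{r+1}(\eta;\epsilon) = (\eta - A_r(\epsilon))\,\tilde{q}_r(\eta;\epsilon) - B_r(\epsilon)\,\tilde{q}_{r-1}(\eta;\epsilon),
\]
where $A_r$ and $B_r$ denote the two coefficient functions appearing in \eqref{modified recurrence}, and combining the expansions displayed immediately after \eqref{modified recurrence} with \eqref{lambda_zero} and the convergence statements for $\mu_\epsilon$ and $\sigma_\epsilon$, one finds
\[
	A_r(0)=0,\quad A_r'(0)=-r\sigma_0,\quad B_r(0)=\tfrac{r}{2},\quad B_r'(0)=0,
\]
where $\sigma_0:=(1-2\rho)/\sqrt{2}$. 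Since $\tilde{q}_{-1}\equiv 0$ and $\tilde{q}_0\equiv 1$ are trivially right-differentiable, induction on $r$ via the product rule applied to the displayed recurrence shows that each $\tilde{q}_r$ is partially right-differentiable in $\epsilon$ at $\epsilon=0$. Differentiating the recurrence at $\epsilon=0$ and using \eqref{constant term} to evaluate $\tilde{q}_r(\eta;0)=h_r(\eta)$ then yields
\[
	\eta\,g_r(\eta) = g_{r+1}(\eta) - r\sigma_0\,h_r(\eta) + \tfrac{r}{2}\,g_{r-1}(\eta),
\]
with $g_{-1}=g_0=0$; this relation uniquely determines $g_{r+1}$ from $g_r,g_{r-1}$.

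Next, set
\[
	G_r(\eta) := \frac{r\sigma_0}{3}\bigl((r-1+\eta^2)\,h_{r-1}(\eta) - \eta\,h_r(\eta)\bigr).
\]
A single application of \eqref{Hermite recurrence} to eliminate $\eta h_r$ from $G_{r+1}$ gives the useful compact form
\[
	G_{r+1}(\eta) = \frac{r(r+1)\sigma_0}{3}\Bigl(h_r(\eta) + \tfrac{\eta}{2}\,h_{r-1}(\eta)\Bigr).
\]
Since $G_0 = 0$, it suffices to verify $\eta G_r = G_{r+1} - r\sigma_0 h_r + \tfrac{r}{2}G_{r-1}$. Substituting the compact forms of $G_{r+1},G_r,G_{r-1}$ and reducing every factor $\eta\cdot h_k$ and $\eta^2\cdot h_k$ to the Hermite basis via \eqref{Hermite recurrence}, the identity collapses to matching the coefficients of $h_r$, $h_{r-2}$, and $h_{r-4}$; the $h_{r-4}$ coefficients cancel exactly, and the others match (both $h_r$ coefficients equal $\tfrac{3(r+1)}{2}$ and both $h_{r-2}$ coefficients equal $\tfrac{r^2-1}{4}$). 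Uniqueness of solutions to the recurrence then gives $g_r = G_r$, which is the claimed formula.

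The main obstacle is the bookkeeping in the final step: reducing $\eta^2 h_{r-2}$ via \eqref{Hermite recurrence} generates an $h_{r-4}$ contribution that has to be cancelled against the expansion of $(\eta/2) h_{r-3}$, and the coefficient arithmetic is error-prone without careful tracking, even though conceptually the identity is just a polynomial equality in the Hermite basis. One can cross-check by hand for small values ($G_1=0$, $G_2=\sigma_0\eta$, $G_3=\sigma_0(3\eta^2-1)$) to guard against sign errors before undertaking the general computation.
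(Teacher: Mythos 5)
Your proof is correct and follows essentially the same route as the paper: both differentiate the three-term recurrence \eqref{modified recurrence} at $\epsilon=0$ to obtain the perturbation recurrence $\eta g_r = g_{r+1} - r\sigma_0 h_r + \tfrac{r}{2}g_{r-1}$ with $g_0=0$; the paper then solves it explicitly (obtaining $\tfrac{r(r-1)}{2}\sigma_0 h_{r-1} + \tfrac{r(r-1)(r-2)}{12}\sigma_0 h_{r-3}$ before converting to the stated form), whereas you verify that the claimed closed form satisfies it, and your product-rule induction handles the right-differentiability somewhat more cleanly than the paper's $o(1)$ bookkeeping. The verification itself is sound (the $h_r$, $h_{r-2}$, and $h_{r-4}$ coefficients do all match), though the parenthetical value $\tfrac{r^2-1}{4}$ you report for the $h_{r-2}$ coefficient appears to be a slip --- relative to the common factor $\tfrac{r(r-1)\sigma_0}{3}$ it should be $\tfrac{4r-5}{4}$ on both sides --- which does not affect the argument.
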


\begin{proof}
Throughout the proof, we fix $\eta\in\mathbb{R}$ and set
\begin{equation*}
	\Delta_r(\epsilon)=\Delta_r(\eta;\epsilon)=\frac{\tilde{q}_r(\eta;\epsilon)-h_r(\eta)}{\epsilon}.
\end{equation*}
It follows from \eqref{modified recurrence}--\eqref{Hermite recurrence} and \eqref{constant term} that
\begin{align}
	\eta \Delta_r(\epsilon) &= \Delta_{r+1}(\epsilon) -(\mu_{\epsilon}+r\sigma_{\epsilon})\tilde{q}_r(\eta;\epsilon) +\frac{r}{2} \Delta_{r-1}(\epsilon) +O(\epsilon) \label{degree 1 recurrence} \\
	&= \Delta_{r+1}(\epsilon) -r\sigma_0 h_r(\eta) +\frac{r}{2} \Delta_{r-1}(\epsilon) +o(1), \notag
\end{align}
where we set
\begin{equation*}
	\sigma_0:=\lim_{\epsilon\rightarrow +0}\sigma_{\epsilon}=\frac{1-2\rho}{\sqrt{2}}
\end{equation*}
for brevity.
Since $\tilde{q}_0(\eta;\epsilon)=1$, we have $\Delta_0(\epsilon)=0$.
Solving the recurrence \eqref{degree 1 recurrence} using this initial condition and \eqref{Hermite recurrence}, we routinely obtain
\begin{equation*}
	\Delta_r(\epsilon)=\frac{r(r-1)}{2}\sigma_0 h_{r-1}(\eta) +\frac{r(r-1)(r-2)}{12}\sigma_0 h_{r-3}(\eta)+o(1),
\end{equation*}
where $h_{-1}(\eta)=h_{-2}(\eta)=h_{-3}(\eta):=0$.
It follows that $\tilde{q}_r(\eta;\epsilon)$ is partially right differentiable with respect to $\epsilon$ at $(\eta,0)$:
\begin{align*}
	\frac{\partial\tilde{q}_r}{\partial\epsilon}(\eta;0) &= \lim_{\epsilon\rightarrow+0} \Delta_r(\epsilon) \\
	&= \frac{r(r-1)}{2}\sigma_0 h_{r-1}(\eta) +\frac{r(r-1)(r-2)}{12}\sigma_0 h_{r-3}(\eta).
\end{align*}
Finally, from \eqref{Hermite recurrence} it follows that
\begin{align*}
	\frac{\partial\tilde{q}_r}{\partial\epsilon}(\eta;0) &= \frac{r(r-1)}{2}\sigma_0 h_{r-1}(\eta) +\frac{r(r-1)}{6}\sigma_0 \bigl(\eta h_{r-2}(\eta)-h_{r-1}(\eta) \bigr) \\
	&= \frac{r(r-1)}{3}\sigma_0 h_{r-1}(\eta) +\frac{r}{3}\sigma_0\eta \bigl(\eta h_{r-1}(\eta)-h_r(\eta) \bigr) \\
	&= \frac{r\sigma_0}{3}\!\left((r-1+\eta^2) h_{r-1}(\eta) -\eta h_r(\eta)\right),
\end{align*}
as desired.
\end{proof}

\begin{prop}\label{behavior of zeros of Hahn}
Recall Assumption \ref{limit setting}.
Fix a positive integer $e$, and let
\begin{gather*}
	\xi_{-\lfloor e/2\rfloor}<\dots<\xi_{-1}<(\xi_0)<\xi_1<\dots<\xi_{\lfloor e/2 \rfloor}, \\
	\eta_{-\lfloor e/2\rfloor}<\dots<\eta_{-1}<(\eta_0)<\eta_1<\dots<\eta_{\lfloor e/2 \rfloor}
\end{gather*}
be the zeros of $q_e(\xi;\epsilon)$ and $h_e(\eta)$ from \eqref{monic Hahn} and \eqref{monic Hermite}, respectively, where $\xi_0$ and $\eta_0$ appear only when $e$ is odd.
Then $\xi_i$ satisfies
\begin{equation*}
	\lim_{\epsilon\rightarrow +0}\left( \xi_i -\frac{\lambda_{\epsilon}\eta_i}{\epsilon}-\frac{\alpha_{\epsilon}N_{\epsilon}}{(\alpha_{\epsilon}+\beta_{\epsilon})\epsilon^2}\right) =  \frac{2\rho-1}{3}\bigl(e-1+(\eta_i)^2\bigr)
\end{equation*}
as a function of $\epsilon$, for $i=-\lfloor e/2\rfloor,\dots,-1,(0),1,\dots,\lfloor e/2 \rfloor$.
\end{prop}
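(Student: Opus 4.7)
The plan is to rephrase the limit in terms of the zeros $\tilde{\eta}_i(\epsilon)$ of $\tilde{q}_e(\,\cdot\,;\epsilon)$ and then carry out a first-order perturbation analysis about $\epsilon=0$ using Lemma \ref{partial derivative wrt epsilon}. By the definition of $\tilde{q}_e$, the zeros of $q_e(\xi;\epsilon)$ and of $\tilde{q}_e(\,\cdot\,;\epsilon)$ correspond under the affine substitution, so that
\[
\xi_i-\frac{\lambda_{\epsilon}\eta_i}{\epsilon}-\frac{\alpha_{\epsilon}N_{\epsilon}}{(\alpha_{\epsilon}+\beta_{\epsilon})\epsilon^2}=\frac{\lambda_{\epsilon}}{\epsilon}\bigl(\tilde{\eta}_i(\epsilon)-\eta_i\bigr).
\]
By \eqref{lambda_zero}, it therefore suffices to show $\lim_{\epsilon\to+0}(\tilde{\eta}_i(\epsilon)-\eta_i)/\epsilon=(2\rho-1)(e-1+\eta_i^2)/(3\sqrt{2})$.

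The first step is to check that $\tilde{\eta}_i(\epsilon)\to\eta_i$ under a consistent indexing. Since $\tilde{q}_e(\,\cdot\,;\epsilon)$ is a monic degree-$e$ polynomial whose coefficients converge (by \eqref{constant term}) to those of $h_e$, and the zeros of $h_e$ are simple and real, the $e$ real zeros of $\tilde{q}_e(\,\cdot\,;\epsilon)$ depend continuously on $\epsilon$ near $\epsilon=0$ and can be labeled so that $\tilde{\eta}_i(\epsilon)\to\eta_i$ as $\epsilon\to+0$. This is the standard continuity of simple roots in the coefficients.

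The next step is the first-order expansion. The computation behind Lemma \ref{partial derivative wrt epsilon} yields the coefficient-wise expansion
\[
\tilde{q}_e(\eta;\epsilon)=h_e(\eta)+\epsilon\,\phi(\eta)+o(\epsilon),\qquad \phi(\eta):=\frac{\partial\tilde{q}_e}{\partial\epsilon}(\eta;0),
\]
uniform on compact sets in $\eta$. Writing $\tilde{\eta}_i(\epsilon)=\eta_i+\delta_i(\epsilon)$ with $\delta_i(\epsilon)\to 0$, substituting into $\tilde{q}_e(\tilde{\eta}_i(\epsilon);\epsilon)=0$, using $h_e(\eta_i)=0$, and invoking \eqref{partial derivative wrt eta} for the $\eta$-derivative of $\tilde{q}_e$, one obtains
\[
0=h_e'(\eta_i)\,\delta_i(\epsilon)+\epsilon\,\phi(\eta_i)+o\bigl(\epsilon+\delta_i(\epsilon)\bigr).
\]
By \eqref{derivative of h} and the simplicity of the zeros of $h_e$, $h_e'(\eta_i)=e\,h_{e-1}(\eta_i)\ne 0$; hence $\delta_i(\epsilon)=O(\epsilon)$ and $\lim_{\epsilon\to+0}\delta_i(\epsilon)/\epsilon=-\phi(\eta_i)/h_e'(\eta_i)$.

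Finally, one evaluates this explicitly. Because $h_e(\eta_i)=0$, Lemma \ref{partial derivative wrt epsilon} specialized to $r=e$, $\eta=\eta_i$ gives $\phi(\eta_i)=\frac{e(1-2\rho)}{3\sqrt{2}}(e-1+\eta_i^2)\,h_{e-1}(\eta_i)$, and dividing by $h_e'(\eta_i)=e\,h_{e-1}(\eta_i)$ cancels the factor $h_{e-1}(\eta_i)$ to yield $\lim_{\epsilon\to+0}(\tilde{\eta}_i(\epsilon)-\eta_i)/\epsilon=(2\rho-1)(e-1+\eta_i^2)/(3\sqrt{2})$. Multiplying by $\sqrt{2}$ gives the claim. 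The main technical point is the uniform first-order expansion of $\tilde{q}_e(\eta;\epsilon)$ in $\epsilon$ on compact sets in $\eta$; this is a mild strengthening of Lemma \ref{partial derivative wrt epsilon} that nevertheless follows from its proof, since $\tilde{q}_e$ has only $e+1$ coefficients and each of them is right differentiable at $\epsilon=0$ by the same $O(\epsilon^2)$ estimates appearing in the derivation of \eqref{modified recurrence}.
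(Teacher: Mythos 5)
Your proposal is correct and follows essentially the same route as the paper's proof: translate to the zeros of $\tilde{q}_e(\,\cdot\,;\epsilon)$, use the convergence of those zeros to the (simple) zeros of $h_e$, and then perform a first-order perturbation in $\epsilon$ via Lemma \ref{partial derivative wrt epsilon} together with \eqref{partial derivative wrt eta}, cancelling the common factor $h_{e-1}(\eta_i)\ne 0$. The paper phrases the expansion step through the mean value theorem in $\eta$ rather than your uniform-on-compacts $o(\epsilon)$ expansion, but the content and the final computation are identical.
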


\begin{proof}
Define $\tau_i$ by
\begin{equation*}
	\xi_i=\frac{\lambda_{\epsilon}(\eta_i+\tau_i)}{\epsilon}+\frac{\alpha_{\epsilon}N_{\epsilon}}{(\alpha_{\epsilon}+\beta_{\epsilon})\epsilon^2},
\end{equation*}
so that $\eta_i+\tau_i$ is a zero of $\tilde{q}_e(\eta;\epsilon)$.
Then, from \eqref{constant term} it follows that
\begin{equation}\label{difference approach 0}
	\lim_{\epsilon\rightarrow +0} \tau_i=0.
\end{equation}
For the moment, fix $i$.
Then we have
\begin{equation*}
	0 = \tilde{q}_e(\eta_i+\tau_i; \epsilon) = \tilde{q}_e(\eta_i; \epsilon) + \frac{\partial\tilde{q}_e}{\partial\eta}(\eta_i+\theta\tau_i; \epsilon)\tau_i
\end{equation*}
for some $\theta\in(0,1)$ depending on $\epsilon$.
Hence, from \eqref{partial derivative wrt eta}, \eqref{difference approach 0}, Lemma \ref{partial derivative wrt epsilon},
and since
\begin{equation*}
	\tilde{q}_e(\eta_i; 0)=h_e(\eta_i)=0,
\end{equation*}
it follows that
\begin{align*}
	\lim_{\epsilon\rightarrow +0} \frac{\tau_i}{\epsilon} &= -\frac{1}{eh_{e-1}(\eta_i)} \lim_{\epsilon\rightarrow +0} \frac{\tilde{q}_e(\eta_i; \epsilon)}{\epsilon} \\
	&= -\frac{1}{eh_{e-1}(\eta_i)} \frac{\partial\tilde{q}_e}{\partial\epsilon}(\eta_i;0) \\
	&= \frac{2\rho-1}{3\sqrt{2}} \bigl(e-1+(\eta_i)^2\bigr),
\end{align*}
where we note that $h_e(\eta)$ and $h_{e-1}(\eta)$ have no common zero by the general theory of orthogonal polynomials; see, e.g., \cite[Theorem 3.3.2]{Szego1975B}.
By \eqref{lambda_zero}, we have
\begin{equation*}
	\lim_{\epsilon\rightarrow +0}\left( \xi_i -\frac{\lambda_{\epsilon}\eta_i}{\epsilon}-\frac{\alpha_{\epsilon}N_{\epsilon}}{(\alpha_{\epsilon}+\beta_{\epsilon})\epsilon^2}\right) = \lim_{\epsilon\rightarrow +0} \frac{\lambda_{\epsilon}\tau_i}{\epsilon} =  \frac{2\rho-1}{3} \bigl(e-1+(\eta_i)^2\bigr).
\end{equation*}
This completes the proof.
\end{proof}

The following is part of the estimates on the zeros of $h_e(\eta)$ used in \cite{Bannai1977QJMO}.\footnote{Bannai \cite{Bannai1977QJMO} worked with the polynomial $\sqrt{2^e}h_e(\eta/\sqrt{2})$. We may remark that the upper bounds $\sqrt{3}$ mentioned in Proposition 13\,(i) and (ii) in \cite{Bannai1977QJMO} should both be $3$. See also \cite[Proposition 2.4]{DS-G2013JAC}.}

\begin{prop}[{\cite[Proposition 13]{Bannai1977QJMO}}]\label{bounds on zeros of Hermite}
Fix a positive integer $e$, and let the $\eta_i$ be as in Proposition \ref{behavior of zeros of Hahn}.
Then $\eta_{-i}=-\eta_i$ for all $i$.
Moreover, the following hold:
\begin{enumerate}
\item If $e$ is odd and $e\geqslant 5$, then $\eta_0=0$ and $(\eta_1)^2<3/2$.
\item If $e$ is even and $e\geqslant 8$, then $(\eta_2)^2-(\eta_1)^2<3/2$.
\end{enumerate}
\end{prop}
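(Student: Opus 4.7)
The symmetry $\eta_{-i}=-\eta_i$ is immediate from the parity relation \eqref{Hermite is symmetric}, which for odd $e$ also forces $h_e(0)=0$, giving $\eta_0=0$ in case~(i). For the two zero estimates I would use a single Sturm--Liouville argument: the substitution $y(\eta)=e^{-\eta^2/2}h_e(\eta)$ converts the Hermite ODE $h_e''-2\eta h_e'+2eh_e=0$ into the Liouville normal form
\begin{equation*}
y''+(2e+1-\eta^2)\,y=0,
\end{equation*}
and $y$ and $h_e$ share the same real zeros. Sturm's comparison theorem then says that consecutive zeros $a<b$ of $y$ inside an interval on which $q(\eta):=2e+1-\eta^2\geqslant M>0$ satisfy $b-a\leqslant \pi/\sqrt{M}$.

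For (i), apply this to the consecutive pair $\eta_0=0,\eta_1$ with $M=2e+1-\eta_1^2$ to obtain $\eta_1^2(2e+1-\eta_1^2)\leqslant \pi^2$. Thus $\eta_1^2$ is at most the smaller root of $t(2e+1-t)=\pi^2$, and this smaller root is $<3/2$ exactly when $\tfrac{3}{2}(2e+1-\tfrac{3}{2})>\pi^2$, which holds for all $e\geqslant 5$. For (ii), apply Sturm twice: on $[-\eta_1,\eta_1]$ (with $M=2e+1-\eta_1^2$) to get $2\eta_1\leqslant \pi/\sqrt{2e+1-\eta_1^2}\leqslant \pi/\sqrt{2e+1-\eta_2^2}$, and on $[\eta_1,\eta_2]$ to get $\eta_2-\eta_1\leqslant \pi/\sqrt{2e+1-\eta_2^2}$. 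Summing gives $\eta_2\leqslant 3\pi/(2\sqrt{2e+1-\eta_2^2})$, hence $\eta_2^2(2e+1-\eta_2^2)\leqslant 9\pi^2/4$. The analogous analysis forces $\eta_2^2<3/2$ as soon as $\tfrac{3}{2}(2e+1-\tfrac{3}{2})>9\pi^2/4$, i.e., for $e\geqslant 8$; since $\eta_2^2-\eta_1^2\leqslant\eta_2^2$, this yields (ii).

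The principal sharpness concern is that the constant $3/2$ is not asymptotic: the Sturm estimate is loosest exactly at the thresholds $e=5$ in (i) and $e=8$ in (ii), where the required inequalities $\pi^2<14.25$ and $9\pi^2/4<23.25$ are true but with only modest slack. Strictness of the Sturm estimate (valid because $q$ is nonconstant on each interval used) is what produces the strict bounds in the statement, and one should also check that the smaller root of each quadratic constraint is the relevant branch—this follows from the crude fact $\eta_i^2<(2e+1)/2$ in the ranges under consideration. This plan essentially reproduces the strategy behind Bannai's Proposition~13 in \cite{Bannai1977QJMO}.
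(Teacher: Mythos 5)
Your argument is essentially correct but takes a genuinely different route from the paper's. The paper also appeals to Sturm's method, but only through the cited monotonicity statement from \cite[remark before (6.31.19)]{Szego1975B}: $\sqrt{2e+1}\,\eta_i^e$ decreases as $e$ increases through a fixed parity class, so everything reduces to the explicitly computable zeros of $h_3(\eta)=\eta^3-\tfrac{3}{2}\eta$ and $h_4(\eta)=\eta^4-3\eta^2+\tfrac{3}{4}$, giving $(\eta_1^e)^2<\tfrac{7}{2e+1}\cdot\tfrac{3}{2}$ for odd $e\geqslant 5$ and $(\eta_2^e)^2<\tfrac{9}{2e+1}\cdot\tfrac{3+\sqrt{6}}{2}$ for even $e\geqslant 8$. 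Your direct gap estimate in the Liouville normal form is self-contained and does not outsource the Sturm work to a citation; the paper's reduction to $e=3,4$ is shorter on the page. (The symmetry claim and $\eta_0=0$ via \eqref{Hermite is symmetric} are handled identically in both.)

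The one step you cannot wave away is the branch selection. From $\eta_1^2(2e+1-\eta_1^2)\leqslant\pi^2$ you may only conclude $\eta_1^2\leqslant t_-$ \emph{or} $\eta_1^2\geqslant t_+$, where $t_\pm$ are the two roots, and the ``crude fact'' $\eta_i^2<(2e+1)/2$ used to discard $t_+$ is not implied by anything you have established: knowing only that all zeros lie in $(-\sqrt{2e+1},\sqrt{2e+1})$ does not exclude the upper branch (for $e=5$ one has $t_+\approx 10.0$ while $2e+1=11$). The fact is true, but it needs its own short Sturm comparison --- for instance, on $[0,\sqrt{(2e+1)/2}]$ one has $q\geqslant(2e+1)/2$, so $y$ acquires a zero in $(0,\pi\sqrt{2/(2e+1)})$, whence $\eta_1^2<2\pi^2/(2e+1)<(2e+1)/2$, and similarly for $\eta_2$. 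Alternatively you can sidestep the two branches entirely by arguing by contradiction: assume $\eta_1^2\geqslant 3/2$ (resp.\ $\eta_2^2\geqslant 3/2$), note that $q\geqslant 2e-\tfrac{1}{2}$ on $[-\sqrt{3/2},\sqrt{3/2}]$, and compare the number of zeros Sturm forces into that interval with the at most one (resp.\ two) actually present; the same thresholds $\tfrac{3}{2}\bigl(2e-\tfrac{1}{2}\bigr)>\pi^2$ and $6\bigl(2e-\tfrac{1}{2}\bigr)>9\pi^2$ emerge, and they hold precisely for $e\geqslant 5$ and $e\geqslant 8$. With that repair your proof is complete.
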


\begin{proof}
That $\eta_{-i}=-\eta_i$ is immediate from \eqref{Hermite is symmetric}.
We now write $\eta_i=\eta_i^e$ to compare these zeros for different values of $e$.
Then, as an application of Sturm's method, it follows that
\begin{equation*}
	\sqrt{2e+1}\,\eta_i^e<\sqrt{2e'+1}\,\eta_i^{e'} \qquad (i=1,2,\dots,\lfloor e'/2\rfloor),
\end{equation*}
whenever $e'<e$ and $e'\equiv e\ (\operatorname{mod}\, 2)$; see the comments preceding (6.31.19) in \cite{Szego1975B}.
Since
\begin{equation*}
	h_3(\eta)=\eta^3-\frac{3}{2}\eta, \qquad h_4(\eta)=\eta^4-3\eta^2+\frac{3}{4},
\end{equation*}
we have
\begin{equation*}
	\eta_1^3=\sqrt{\frac{3}{2}}, \qquad \eta_2^4=\sqrt{\frac{3+\sqrt{6}}{2}}.
\end{equation*}
Hence, for odd $e\geqslant 5$ we have
\begin{equation*}
	(\eta_1^e)^2<\frac{7}{2e+1}(\eta_1^3)^2=\frac{21}{4e+2}<\frac{3}{2},
\end{equation*}
and for even $e\geqslant 8$ we have
\begin{equation*}
	(\eta_2^e)^2-(\eta_1^e)^2<(\eta_2^e)^2<\frac{9}{2e+1} (\eta_2^4)^2=\frac{27+9\sqrt{6}}{4e+2}<\frac{3}{2},
\end{equation*}
as desired.
\end{proof}

\section{A finiteness result for tight relative \texorpdfstring{$2e$}{2e}-designs on two shells in \texorpdfstring{$\mathcal{Q}_n$}{Qn}}\label{sec: finiteness result}

In this section, we prove that

\begin{thm}\label{counterpart of Bannai's theorem}
For any $\delta\in(0,1/2)$, there exists $e_0=e_0(\delta)>0$ with the property that, for every given integer $e\geqslant e_0$ and each constant $c>0$, there are only finitely many tight relative $2e$-designs $(Y,\omega)$ (up to scalar multiples of $\omega$) supported on two shells $X_{\ell}\sqcup X_m$ in $\mathcal{Q}_n$ satisfying Assumption \ref{assumption on Y} such that 
\begin{equation}\label{l/n tends to 0 fast}
	\ell<c\cdot n^{\delta}.
\end{equation}
\end{thm}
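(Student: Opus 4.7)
The plan is to adapt Bannai's method from \cite{Bannai1977QJMO}, arguing by contradiction. Suppose that for some $\delta \in (0,1/2)$ and some $e$ (to be taken large) there are infinitely many tight relative $2e$-designs $(Y,\omega)$ satisfying the hypotheses of the theorem; by Proposition \ref{ratio of weights} the weight $\omega$ is determined up to a scalar, so this produces infinitely many parameter triples $(n,\ell,m)$. First I would pass to a subsequence with $n \to \infty$, use $\ell < cn^\delta$ with $\delta<1/2$ to conclude $\ell/n \to 0$, and then further extract a subsequence with $\rho:=\lim (m+1)/(n+2)\in[0,1]$. The subcase of bounded $\ell$ (the only scenario where the Hahn-to-Hermite limit machinery fails) I would handle separately via a Hahn-to-Krawtchouk degeneration together with the appendix's Proposition \ref{variation of Schur's theorem}; in the rest of the plan I assume $\ell \to \infty$.

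Now apply Proposition \ref{behavior of zeros of Hahn} to the Hahn polynomial $\psi_e^{\ell,m}(\xi)=Q_e(\xi;m-n-1,-m-1,\ell)$ of \eqref{Wilson lm}, whose zeros are integers by Theorem \ref{counterpart of Wilson's theorem}(ii). With $\alpha=m-n-1$, $\beta=-m-1$, $N=\ell$, routine computation verifies Assumption \ref{limit setting}:
\begin{equation*}
\epsilon=\frac{n+2}{\sqrt{(n+1-m)(m+1)\ell}}\to 0,\quad \frac{N_\epsilon}{\alpha_\epsilon+\beta_\epsilon}=-\frac{\ell}{n+2}\to 0,\quad \frac{\beta_\epsilon}{\alpha_\epsilon+\beta_\epsilon}=\frac{m+1}{n+2}\to\rho.
\end{equation*}
Proposition \ref{behavior of zeros of Hahn} then yields, for the zeros $\xi_i$ of $\psi_e^{\ell,m}$ and the zeros $\eta_i$ of the monic Hermite polynomial $h_e$,
\begin{equation*}
\xi_i=\frac{\lambda_\epsilon\eta_i}{\epsilon}+\frac{\alpha_\epsilon N_\epsilon}{(\alpha_\epsilon+\beta_\epsilon)\epsilon^2}+\frac{2\rho-1}{3}\bigl(e-1+\eta_i^2\bigr)+o(1).
\end{equation*}

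Next I would exploit the integrality of the $\xi_i$ together with the Hermite zero bounds of Proposition \ref{bounds on zeros of Hermite}. For odd $e \geqslant 5$, using $\eta_0=0$ and $\eta_{-1}=-\eta_1$, the integer $\xi_1+\xi_{-1}-2\xi_0$ has limit $\tfrac{2(2\rho-1)}{3}\eta_1^2$; since $\eta_1^2<3/2$ and $|2\rho-1|\leqslant 1$, this lies strictly inside $(-1,1)$ and must therefore vanish for all large $n$, which forces $\rho=1/2$. For even $e \geqslant 8$, the same conclusion follows from the integer $\xi_1+\xi_{-1}-\xi_2-\xi_{-2}\to \tfrac{2(2\rho-1)}{3}(\eta_1^2-\eta_2^2)$ using Proposition \ref{bounds on zeros of Hermite}(ii).

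The hardest step will be the residual case $\rho=1/2$ (i.e.\ $m\sim n/2$), where the first-order correction in Proposition \ref{behavior of zeros of Hahn} vanishes and the above integer comparisons give no information. I would address it either by extending Proposition \ref{behavior of zeros of Hahn} to the next order in $\epsilon$, where the small parameter $(\alpha_\epsilon-\beta_\epsilon)/(\alpha_\epsilon+\beta_\epsilon)\propto n-2m$ now plays the role of $2\rho-1$ and permits a new integer comparison, or by applying Proposition \ref{variation of Schur's theorem} to the symmetric polynomial (of Krawtchouk type, replacing $h_e$) that governs the limit in this regime to forbid integral roots directly. Choosing $e_0=e_0(\delta)$ so that Proposition \ref{bounds on zeros of Hermite} and this refined $\rho=1/2$ analysis are simultaneously effective completes the plan.
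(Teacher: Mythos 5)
Your skeleton matches the paper's up to the conclusion $\rho=1/2$: contradiction setup, reduction to accumulation points of $(\ell/n,m/n)$, the Hahn-to-Hermite limit of Proposition \ref{behavior of zeros of Hahn}, and the integer comparison $\xi_i+\xi_{-i}-\xi_j-\xi_{-j}\to\frac{4\rho-2}{3}\bigl((\eta_i)^2-(\eta_j)^2\bigr)$ combined with Proposition \ref{bounds on zeros of Hermite}. However, there are two genuine gaps. First, your claim that ``routine computation verifies Assumption \ref{limit setting}'' is false in the cases $\rho=0$ and $\rho=1$: there $m(n-m)/n^2\to 0$, and since $\ell<c\cdot n^{\delta}$ with $\delta<1/2$ one cannot conclude $\epsilon\approx n/\sqrt{\ell m(n-m)}\to 0$ from growth of $\ell$ alone. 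The paper's Claims \ref{claim 1} and \ref{claim 2} handle both the bounded-$\ell$ subcase and the $\rho\in\{0,1\}$ subcases by a single mechanism you do not invoke: the normalized coefficients $s_i/s_0$ of $\psi_e^{\ell,m}(\xi)/s_0$ are \emph{nonzero integers} (because the zeros are integers), and boundedness of $s_1/s_0$ and $s_2/s_0$ (or of two related integer combinations) pins down $n$ and $m$, yielding a contradiction. Your proposed ``Hahn-to-Krawtchouk degeneration'' for bounded $\ell$ is not needed and is not developed.

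Second, and more seriously, the residual case $\rho=1/2$ --- which you correctly identify as the hardest step --- is left unresolved, and neither of your two proposed routes is the one that works in the paper. The paper's key observation is that once $\rho=1/2$, the limit relation together with integrality forces the \emph{exact} identities $\xi_i+\xi_{-i}=\xi_j+\xi_{-j}$ for all $i,j$, i.e., the integer zeros are exactly symmetric about their mean $\tilde{\xi}$; hence every odd coefficient $w_{2i-1}$ in the expansion of $\psi_e^{\ell,m}(\xi)/s_0$ about $\tilde{\xi}$ vanishes exactly, and the explicit formula for $w_3$ contains the factor $(n-2\ell)(n-2m)$, forcing $n=2m$ \emph{exactly} (not merely asymptotically, which is all a next-order expansion of Proposition \ref{behavior of zeros of Hahn} could give you). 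Only then does the appendix enter: with $n=2m$, the denominator $(2m-2e+2)_{2i}$ of $s_{2i}/s_0$ contains the product of consecutive odd integers $(2m-2e+3)(2m-2e+5)\cdots$, and Proposition \ref{variation of Schur's theorem} with $\vartheta=2$ produces a prime $p>2\mathfrak{k}+1$ whose exponent in that product exceeds its exponent in $(\ell-e+1)_{2\mathfrak{k}}$, giving $\nu_p(s_{2i}/s_0)<0$ and contradicting integrality. Your second alternative gestures at Proposition \ref{variation of Schur's theorem} but misdescribes its role (it is applied to the arithmetic of the coefficients, not to ``forbid integral roots'' of a limiting Krawtchouk polynomial), and without the exact reduction to $n=2m$ the arithmetic argument cannot be set up.
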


Our proof is an application of Bannai's method from \cite{Bannai1977QJMO}.
We will use the following result, which is a variation of \cite[Satz I]{Schur1929SPAW}:

\begin{prop}\label{variation of Schur's theorem}
For any $\vartheta>0$ and $\delta\in(0,1/\vartheta)$, there exists $\mathfrak{k}_0=\mathfrak{k}_0(\vartheta,\delta)>0$ such that the following holds for every given integer $\mathfrak{k}\geqslant \mathfrak{k}_0$ and each constant $c>0$:
for all but finitely many pairs $(\mathfrak{a},\mathfrak{b})$ of positive integers with
\begin{equation*}
	\mathfrak{b}<c\cdot\mathfrak{a}^{\delta},
\end{equation*}
the product of $\mathfrak{k}$ consecutive odd integers
\begin{equation*}
	(2\mathfrak{a}+1)(2\mathfrak{a}+3)\cdots(2\mathfrak{a}+2\mathfrak{k}-1)
\end{equation*}
has a prime factor which is greater than $2\mathfrak{k}+1$ and whose exponent in this product is greater than that in
\begin{equation*}
	(\mathfrak{b}+1)(\mathfrak{b}+2)\cdots(\mathfrak{b}+\lfloor \vartheta\mathfrak{k}\rfloor).
\end{equation*}
\end{prop}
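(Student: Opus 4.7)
The plan is to give a Schur-style counting argument. Fix $\vartheta,\delta$ and (later) $\mathfrak{k},c$, and suppose for contradiction that there exist infinitely many admissible pairs $(\mathfrak{a},\mathfrak{b})$ with $\mathfrak{b}<c\,\mathfrak{a}^{\delta}$ for which every prime $p>2\mathfrak{k}+1$ dividing $N:=(2\mathfrak{a}+1)(2\mathfrak{a}+3)\cdots(2\mathfrak{a}+2\mathfrak{k}-1)$ satisfies $v_{p}(N)\leqslant v_{p}(M)$, where $M:=(\mathfrak{b}+1)\cdots(\mathfrak{b}+\mathfrak{k}')$ and $\mathfrak{k}':=\lfloor\vartheta\mathfrak{k}\rfloor$; necessarily $\mathfrak{a}\rightarrow\infty$ along such a subsequence. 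Factoring $N=ST$ with smooth part $S:=\prod_{p\leqslant 2\mathfrak{k}+1}p^{v_{p}(N)}$, the assumption gives $T\mid M$, hence $\log N\leqslant\log S+\log M$. The goal is to estimate all three logarithms asymptotically in $\mathfrak{a}$ and show that, once $\mathfrak{k}\geqslant\mathfrak{k}_{0}(\vartheta,\delta)$ is large, this inequality forces $\mathfrak{a}$ (and hence $\mathfrak{b}<c\mathfrak{a}^{\delta}$) to be bounded in terms of $\mathfrak{k}$ and $c$, contradicting the assumed infinitude.

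The three estimates I need are standard. Trivially $\log N\geqslant\mathfrak{k}\log(2\mathfrak{a})$, and since $\mathfrak{b}<c\mathfrak{a}^{\delta}$,
\[
	\log M \;\leqslant\; \mathfrak{k}'\log(\mathfrak{b}+\mathfrak{k}') \;\leqslant\; \delta\vartheta\,\mathfrak{k}\log\mathfrak{a} + O_{c,\vartheta}(\mathfrak{k})
\]
for $\mathfrak{a}$ large. For $\log S$, I use that $N$ is odd and $\gcd(2,p)=1$ for odd $p$, so among $\mathfrak{k}$ terms of an AP of common difference $2$ at most $\mathfrak{k}/p^{j}+1$ are divisible by $p^{j}$; hence $v_{p}(N)\leqslant\mathfrak{k}/(p-1)+\log_{p}(2\mathfrak{a}+2\mathfrak{k})$ for odd $p\leqslant 2\mathfrak{k}+1$. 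Multiplying by $\log p$, summing over such $p$, and invoking Mertens' theorem $\sum_{p\leqslant x}\frac{\log p}{p-1}=\log x+O(1)$ together with Chebyshev's bound $\pi(x)=O(x/\log x)$ yields
\[
	\log S \;\leqslant\; \bigl(1+o(1)\bigr)\mathfrak{k}\log\mathfrak{k} + O\!\left(\frac{\mathfrak{k}}{\log\mathfrak{k}}\right)\log\mathfrak{a}.
\]

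Substituting these into $\log N\leqslant\log S+\log M$ and dividing by $\mathfrak{k}$,
\[
	\Bigl(1-\delta\vartheta-O(1/\log\mathfrak{k})\Bigr)\log\mathfrak{a} \;\leqslant\; O(\log\mathfrak{k}) + O_{c,\vartheta}(1).
\]
Since $\delta\vartheta<1$ by hypothesis, choosing $\mathfrak{k}_{0}=\mathfrak{k}_{0}(\vartheta,\delta)$ large enough that the coefficient on the left exceeds $(1-\delta\vartheta)/2$ for every $\mathfrak{k}\geqslant\mathfrak{k}_{0}$---a threshold depending only on $\vartheta,\delta$, as required---bounds $\log\mathfrak{a}$, and hence $\mathfrak{a}$ and $\mathfrak{b}$, in terms of $\mathfrak{k}$ and $c$, producing the desired contradiction. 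The main technical point is pinning down the smooth-part bound: the contribution $\pi(2\mathfrak{k}+1)\log\mathfrak{a}\sim(2\mathfrak{k}/\log\mathfrak{k})\log\mathfrak{a}$ from small primes must fit inside the positive gap $(1-\delta\vartheta)\log\mathfrak{a}$, and it is precisely the hypothesis $\delta<1/\vartheta$ that keeps this gap positive (once $\mathfrak{k}$ is taken large). In essence the argument is the Schur--Sylvester counting method, adapted to (i) an arithmetic progression of common difference $2$ and (ii) the comparison of $p$-adic valuations against the auxiliary product $M$ rather than a pure divisibility or largeness conclusion.
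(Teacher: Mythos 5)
Your proposal is correct and follows essentially the same route as the paper's proof, namely Schur's counting method: both compare the valuations at primes $p>2\mathfrak{k}+1$ against the auxiliary product $M$, bound the small-prime contribution by roughly $\mathfrak{k}\log\mathfrak{k}+\pi(2\mathfrak{k}+1)\log\mathfrak{a}$, and use $\vartheta\delta<1$ to keep the coefficient of $\log\mathfrak{a}$ positive once $\mathfrak{k}$ is large. The only differences are in bookkeeping: the paper normalizes by $\chi_{\mathfrak{k}}=1\cdot3\cdots(2\mathfrak{k}-1)$ and uses Legendre's formula, Stirling, and the explicit Rosser--Schoenfeld bound on $\pi$ to extract an explicit $\mathfrak{k}_0$, whereas you bound the smooth part of $N$ directly via Mertens and Chebyshev, which yields an inexplicit but equally valid $\mathfrak{k}_0(\vartheta,\delta)$.
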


\noindent
The proof of Proposition \ref{variation of Schur's theorem} will be deferred to the appendix.

We will establish Theorem \ref{counterpart of Bannai's theorem} by contradiction:

\begin{assump}\label{prove by contradiction}
We fix $\delta\in (0,1/2)$.
Let  $\mathfrak{k}_0=\mathfrak{k}_0(2,\delta)>0$ be as in Proposition \ref{variation of Schur's theorem} (applied to $\vartheta=2$), and set
\begin{equation*}
	e_0=e_0(\delta)=\max\{2\mathfrak{k}_0,8\}.
\end{equation*}
We also fix a positive integer $e\geqslant e_0$ and a constant $c>0$.
Throughout the proof, we assume that there exist infinitely many tight relative $2e$-designs $(Y,\omega)$ in question.
\end{assump}

Let $\Theta$ denote the set of triples $(\ell,m,n)\in\mathbb{N}^3$ taken by those $(Y,\omega)$ in Assumption \ref{prove by contradiction}.
Recall from Proposition \ref{ratio of weights} that $\omega$ is uniquely determined by $Y$ up to a scalar multiple.
Moreover, for each $(\ell,m,n)\in\Theta$ there are only finitely many choices for $Y$.
Hence we have
\begin{equation}\label{Theta is infinite}
	|\Theta|=\infty.
\end{equation}

For the moment, we fix $(\ell,m,n)\in\Theta$ and consider the polynomial $\psi_e^{\ell,m}(\xi)$ (which also depends on $n$) from \eqref{Wilson lm}.
We recall that
\begin{equation*}
	\psi_e^{\ell,m}(\xi)=Q_e(\xi;\alpha,\beta,N),
\end{equation*}
where
\begin{equation}\label{Hahn parameters}
	\alpha=m-n-1, \qquad \beta=-m-1, \qquad N=\ell.
\end{equation}
We note that $\alpha,\beta<-N$ in view of Assumption \ref{assumption on Y}.
By Theorem \ref{counterpart of Wilson's theorem}\,(ii), if we let
\begin{equation}\label{zeros of Wilson polynomial}
	\xi_{-\lfloor e/2\rfloor}<\dots<\xi_{-1}<(\xi_0)<\xi_1<\dots<\xi_{\lfloor e/2 \rfloor}
\end{equation}
denote the zeros of $\psi_e^{\ell,m}(\xi)$ (cf.~Proposition \ref{behavior of zeros of Hahn}), then we have
\begin{equation}\label{locations of zeros}
	\xi_i\in\{0,1,\dots,\ell\} \qquad \text{for all} \ i.
\end{equation}
We also rewrite $\psi_e^{\ell,m}(\xi)$ as follows:
\begin{equation*}
	\psi_e^{\ell,m}(\xi)=\sum_{i=0}^e s_{e-i} (-1)^i(-\xi)_i,
\end{equation*}
where
\begin{equation*}
	s_{e-i}=\binom{e}{i}\frac{(e-n-1)_i}{(m-n)_i(-\ell)_i} \qquad (0\leqslant i\leqslant e).
\end{equation*}
From \eqref{locations of zeros} it follows that the polynomial $\psi_e^{\ell,m}(\xi)/s_0$ is monic and integral:
\begin{equation}\label{monic integral}
	\frac{\psi_e^{\ell,m}(\xi)}{s_0}=\sum_{i=0}^e \frac{s_{e-i}}{s_0} (-1)^i(-\xi)_i=(\xi-\xi_{-\lfloor e/2\rfloor})\cdots(\xi-\xi_{\lfloor e/2\rfloor})\in\mathbb{Z}[\xi],
\end{equation}
where the factor $(\xi-\xi_0)$ appears only when $e$ is odd.
Since $(-1)^i(-\xi)_i$ is also monic and integral, and has degree $i$ for $0\leqslant i\leqslant e$, it follows that
\begin{equation}\label{coefficients}
	\frac{s_i}{s_0}=(-1)^i\binom{e}{i} \frac{(n-m-e+1)_i(\ell-e+1)_i}{(n-2e+2)_i}\in\mathbb{Z}\backslash\{0\} \qquad (0\leqslant i\leqslant e),
\end{equation}
where that these coefficients are non-zero follows from Assumption \ref{assumption on Y}.

We now consider the map $f:\Theta\rightarrow [0,1]^2$ defined by
\begin{equation*}
	f(\ell,m,n)=\left(\frac{\ell}{n},\frac{m}{n}\right)\in [0,1]^2 \qquad ((\ell,m,n)\in\Theta).
\end{equation*}
Recall \eqref{Theta is infinite}.
Moreover, from \eqref{l/n tends to 0 fast} it follows that
\begin{equation}\label{inverse is finite}
	|f^{-1}(a,b)|<\infty \qquad ((a,b)\in[0,1]^2).
\end{equation}
Hence it follows that
\begin{equation*}
	|f(\Theta)|=\infty,
\end{equation*}
so that $f(\Theta)$ has at least one accumulation point in $[0,1]^2$.
Again by \eqref{l/n tends to 0 fast}, such an accumulation point must be of the form
\begin{equation*}
	(0,\rho) \in [0,1]^2.
\end{equation*}

We next show that the parameters $\alpha,\beta$, and $N$ from \eqref{Hahn parameters} satisfy Assumption \ref{limit setting} when $f(\ell,m,n)\rightarrow(0,\rho)$.

\begin{claim}\label{claim 1}
$\ell,m,n-m\rightarrow\infty$ as $f(\ell,m,n)\rightarrow(0,\rho)$.
\end{claim}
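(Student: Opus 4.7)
The plan is to argue by contradiction, using the integrality statement in \eqref{coefficients}: for each $1\leqslant i\leqslant e$, the quantity
\begin{equation*}
R_i := (-1)^i\binom{e}{i}\frac{(n-m-e+1)_i\,(\ell-e+1)_i}{(n-2e+2)_i}
\end{equation*}
is a \emph{non-zero integer}. The crucial elementary fact is that a convergent sequence of integers is eventually constant, with integer limit. Throughout, consider a sequence of triples in $\Theta$ with $f\to(0,\rho)$; we have already noted that $n\to\infty$ along it.

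First I would show $n-m\to\infty$: if not, some subsequence has $n-m$ bounded, and then $\ell\leqslant n-m$ is bounded as well, so I may fix both by passing to a further subsequence. But then $R_1=-e(n-m-e+1)(\ell-e+1)/(n-2e+2)\to 0$ as $n\to\infty$, contradicting $R_1\in\mathbb{Z}\backslash\{0\}$.

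Next, $m\to\infty$: if not, pass to a subsequence with $m$ (and hence $\ell<m$) constant. The ratio $(n-m-e+1)/(n-2e+2)$ tends to $1$, so the integer $R_1$ converges to $-e(\ell-e+1)\ne 0$; eventual constancy then forces $(n-m-e+1)/(n-2e+2)=1$ along the tail, i.e.~$m=e-1$, contradicting $m\geqslant e+1$.

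Finally, $\ell\to\infty$ is the main obstacle. Assume $\ell$ is bounded; fix it along a subsequence, and note that $m,n-m\to\infty$ by the previous two steps and that $\sigma:=1-\rho\in[0,1]$ is well-defined. Eventual constancy of $R_1$ and $R_2$ yields the pair of identities
\begin{equation*}
\frac{n-m-e+1}{n-2e+2}=\sigma,\qquad \frac{(n-m-e+1)(n-m-e+2)}{(n-2e+2)(n-2e+3)}=\sigma^2.
\end{equation*}
Substituting the first identity into the second and simplifying collapses to $\sigma=\sigma^2$, so $\sigma\in\{0,1\}$. But $\sigma=0$ forces $n-m=e-1<\ell$, while $\sigma=1$ forces $m=e-1$, each contradicting Assumption \ref{assumption on Y}. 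The delicate point is this last step, which genuinely requires combining the $i=1$ and $i=2$ instances of \eqref{coefficients}; the first two conclusions depend only on the $i=1$ case.
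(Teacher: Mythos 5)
Your proof is correct, and it reaches the conclusion by a somewhat different mechanism than the paper. The paper first observes that $m,n-m\geqslant\ell$ by Assumption \ref{assumption on Y}, so the whole claim reduces to $\ell\rightarrow\infty$; assuming $\ell$ bounded, it then invokes \eqref{locations of zeros} and \eqref{monic integral} to conclude that only finitely many monic integral polynomials $\psi_e^{\ell,m}(\xi)/s_0$ (with all zeros in $\{0,1,\dots,\ell\}$) can occur, hence only finitely many values of $s_1/s_0$ and $s_2/s_0$, hence of the two ratios $(n-m-e+1)/(n-2e+2)$ and $(n-m-e+2)/(n-2e+3)$; since these two scalars are distinct they determine $n$ and $m$, so $n$ is bounded, contradicting the infinitude of the sequence. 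You instead work directly with the nonzero integers $s_1/s_0$ and $s_2/s_0$ from \eqref{coefficients}, use eventual constancy of convergent integer sequences to upgrade the limits to exact identities, and derive $\sigma=\sigma^2$, forcing the degenerate values $m=e-1$ or $n-m=e-1$. Both arguments ultimately hinge on the same data (the $i=1,2$ instances of \eqref{coefficients} and the incompatibility of two consecutive ratios), but your finishing move is essentially the one the paper deploys in its proof of Claim \ref{claim 2} (the quantities $r_k\approx t_k$ there), so your route bypasses the ``finitely many integer-rooted polynomials'' counting step entirely. Two minor remarks: your first two steps are correct but redundant, since $m,n-m\geqslant\ell$ makes them consequences of the third, and your third step does not actually use them ($(n-m-e+1)/(n-2e+2)\rightarrow 1-\rho$ already follows from $n\rightarrow\infty$ and $m/n\rightarrow\rho$); and the ruling out of $\sigma=0$ and $\sigma=1$ via Assumption \ref{assumption on Y} is exactly right and is the step that genuinely needs $e\leqslant\ell<m\leqslant n-\ell$.
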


\begin{proof}
Since $m,n-m\geqslant\ell$ by Assumption \ref{assumption on Y}, it suffices to show that $\ell\rightarrow\infty$.
Suppose the contrary, i.e., that there is a sequence $(\ell_k,m_k,n_k)$ $(k\in\mathbb{N})$ of distinct elements of $\Theta$ such that
\begin{equation*}
	\lim_{k\rightarrow\infty}f(\ell_k,m_k,n_k)=(0,\rho), \qquad \sup_k \ell_k<\infty.
\end{equation*}
Since the $\ell_k$ are bounded, it follows from \eqref{locations of zeros} and \eqref{monic integral} that there are only finitely many choices for $\psi_e^{\ell,m}(\xi)/s_0$ when $(\ell,m,n)$ ranges over this sequence.
In particular, there are only finitely many choices for each of the coefficients $s_1/s_0$ and $s_2/s_0$, and hence the same is true (cf.~\eqref{coefficients}) for each of
\begin{equation*}
	\frac{n-m-e+1}{n-2e+2}, \qquad \frac{n-m-e+2}{n-2e+3}.
\end{equation*}
However, it is immediate to see that these distinct scalars in turn determine $n$ and $m$ uniquely, from which it follows that the $n_k$ are bounded, a contradiction.
\end{proof}

\begin{claim}\label{claim 2}
$\ell m(n-m)/n^2\rightarrow\infty$ as $f(\ell,m,n)\rightarrow(0,\rho)$.
\end{claim}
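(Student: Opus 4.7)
The plan is to split the argument according to the location of $\rho$ in $[0,1]$. When $\rho\in(0,1)$, the conclusion is immediate: $m/n\to\rho$ forces $(n-m)/n\to 1-\rho$, hence $m(n-m)/n^2\to\rho(1-\rho)>0$, and combining this with Claim \ref{claim 1} (which gives $\ell\to\infty$) yields $\ell m(n-m)/n^2\to\infty$.

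The delicate case is $\rho\in\{0,1\}$; the two sub-cases are structurally parallel (swap $m$ with $n-m$ throughout), so I focus on $\rho=0$. I will argue by contradiction: suppose that along some subsequence with $f(\ell,m,n)\to(0,0)$ we have $\ell m(n-m)/n^2\leqslant C$. Since $(n-m)/n\to 1$, this reads $\ell m/n\leqslant C'$ for some $C'>0$, while Claim \ref{claim 1} still gives $\ell,m\to\infty$.

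To derive a contradiction, I exploit the integrality of the elementary symmetric functions of the zeros $\xi_1<\dots<\xi_e\in\{1,\dots,\ell\}$ from Theorem \ref{counterpart of Wilson's theorem}(ii). Expanding $\psi_e^{\ell,m}(\xi)/s_0$ in falling factorials via \eqref{coefficients} and extracting the coefficient of $\xi^{e-1}$ yields
\begin{equation*}
\sum_{i=1}^{e}\xi_i=e\ell-\binom{e}{2}-\frac{e(\ell-e+1)(m-e+1)}{n-2e+2}.
\end{equation*}
Since distinct positive integers $\xi_i\leqslant\ell$ have sum at most $e\ell-\binom{e}{2}$, with equality forcing $m=e-1<\ell$ (which is excluded by Assumption \ref{assumption on Y}), the quantity $d:=e(\ell-e+1)(m-e+1)/(n-2e+2)$ must be a positive integer. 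The hypothesis $\ell m/n\leqslant C'$ then gives $d\leqslant eC'+O(1)$, so $d$ takes only finitely many values; in particular $n$ is essentially determined by $(\ell,m,d)$ via $n\sim e(\ell-e+1)(m-e+1)/d$. Iterating the same logic with the remaining elementary symmetric functions (each extracted from \eqref{coefficients} as a rational expression in $\ell,m,n$) yields bounded integer defects from their respective maxima, which in turn forces the zero-set $\{\xi_1,\dots,\xi_e\}$ to be a top block $\{\ell-e+1,\dots,\ell\}$ with a bounded perturbation. There are only finitely many such combinatorial shapes, and fixing one of them converts the coefficient identities $[\xi^{e-k}]\psi_e^{\ell,m}(\xi)/s_0=(-1)^ke_k(\xi_1,\dots,\xi_e)$ into an overdetermined system of polynomial equations in $(\ell,m,n)$; combined with $\ell m/n\leqslant C'$, this system admits only finitely many integer solutions, contradicting the infiniteness of the subsequence.

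The main obstacle I anticipate is executing this finite case analysis cleanly and uniformly in $\ell$ as $\ell\to\infty$, since the combinatorial shapes live in a moving window inside $\{1,\dots,\ell\}$. A backup strategy, if the direct approach proves too intricate, is to translate the problem into the rescaled limit of Section \ref{sec: zeros}---where Claim \ref{claim 2} corresponds precisely to the assertion $\epsilon\to 0$ along the sequence---and argue that $\epsilon$ bounded below is incompatible with the zeros of $\psi_e^{\ell,m}$ being distinct integers in $\{1,\dots,\ell\}$ for all sufficiently large $\ell$, using Proposition \ref{behavior of zeros of Hahn} only in the reverse direction (as an obstruction).
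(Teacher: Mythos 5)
Your treatment of the case $\rho\in(0,1)$ matches the paper's, and your opening move in the boundary cases is sound: the quantity $d=e(\ell-e+1)(m-e+1)/(n-2e+2)$ is exactly the first of the two scalars the paper works with for $\rho=0$, your identity $\sum_i\xi_i=e\ell-\binom{e}{2}-d$ is correct, and the conclusion that $d$ is a bounded positive integer --- so that the zero set is the top block $\{\ell-e+1,\dots,\ell\}$ shifted downward by a bounded total amount, leaving only finitely many combinatorial shapes --- is a valid and rather nice structural observation that the paper does not make explicit.

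The gap is in your final step. Having fixed a shape, you assert that the coefficient identities give an overdetermined polynomial system in $(\ell,m,n)$ with only finitely many integer solutions; but that is precisely the point requiring proof. Two such equations in three unknowns generically cut out a curve, so you must produce a third independent constraint and check non-degeneracy (that the first two equations actually determine $m$ and $n$ as functions of $\ell$, and that the next equation is not identically satisfied along that one-parameter family). None of this is carried out, and it is not routine. The paper closes the argument far more economically: it brings in the second scalar $\binom{e}{2}(m-e+1)_2(\ell-e+1)_2/(n-2e+2)_2$, observes that both scalars are bounded nonzero integers and hence take only finitely many values, extracts the two quantities $r_k=(m_k-e+1)(\ell_k-e+1)/(n_k-2e+2)$ and $t_k=(m_k-e+2)(\ell_k-e+2)/(n_k-2e+3)$, whose ratio tends to $1$ by Claim \ref{claim 1}, concludes that $r_k=t_k$ for large $k$, and contradicts the elementary fact that $r_k\ne t_k$ always. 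Your backup plan (arguing that $\epsilon$ bounded below is incompatible with integral zeros) is likewise only a statement of intent. I would replace your finishing step with the paper's two-scalar comparison, which slots directly into the framework you have already set up.
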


\begin{proof}
If $0<\rho<1$ then the result follows from Claim \ref{claim 1} and since
\begin{equation*}
	\frac{m(n-m)}{n^2}\rightarrow \rho(1-\rho)>0.
\end{equation*}

Suppose next that $\rho=1$.
Suppose moreover that there is a sequence $(\ell_k,m_k,n_k)$ $(k\in\mathbb{N})$ of distinct elements of $\Theta$ such that
\begin{equation*}
	\lim_{k\rightarrow\infty}f(\ell_k,m_k,n_k)=(0,1), \qquad \sup_k \frac{\ell_km_k(n_k-m_k)}{(n_k)^2}<\infty.
\end{equation*}
Since $m_k/n_k\rightarrow 1$, we then have
\begin{equation*}
	\sup_k \frac{\ell_k(n_k-m_k)}{n_k}<\infty.
\end{equation*}
Let
\begin{equation*}
	r_k=\frac{(n_k-m_k-e+1)(\ell_k-e+1)}{n_k-2e+2}, \qquad t_k=\frac{(n_k-m_k-e+2)(\ell_k-e+2)}{n_k-2e+3}.
\end{equation*}
Then the $r_k$ and the $t_k$ are bounded since
\begin{equation*}
	r_k\approx t_k\approx \frac{\ell_k(n_k-m_k)}{n_k}
\end{equation*}
by Claim \ref{claim 1}.
From \eqref{coefficients} it follows that $s_1/s_0$ and $s_2/s_0$ are bounded as well, and hence take only finitely many non-zero integral values when $(\ell,m,n)$ ranges over this sequence.
It follows that the $r_k$ and the $t_k$ can assume only finitely many values, and then since $r_k\approx t_k$ we must have $r_k=t_k$ for sufficiently large $k$.
However, it is again immediate to see that $r_k\ne t_k$ for every $k\in\mathbb{N}$, and hence this is absurd.
It follows that the result holds when $\rho=1$.

Finally, suppose that $\rho=0$.
For every $(\ell,m,n)\in\Theta$ we have
\begin{gather*}
	e\frac{(m-e+1)(\ell-e+1)}{n-2e+2}=\frac{s_1}{s_0}+e(\ell-e+1), \\
	\binom{e}{2}\frac{(m-e+1)_2(\ell-e+1)_2}{(n-2e+2)_2}=\frac{s_2}{s_0}+(e-1)(\ell-e+2)\frac{s_1}{s_0}+\binom{e}{2}(\ell-e+1)_2.
\end{gather*}
From \eqref{coefficients} and Assumption \ref{assumption on Y} it follows that these scalars are non-zero integers.
By the same argument as above, but working with these two scalars instead of $s_1/s_0$ and $s_2/s_0$, we conclude that the result holds in this case as well.
\end{proof}

By Claims \ref{claim 1} and \ref{claim 2}, it follows that the parameters $\alpha,\beta$, and $N$ from \eqref{Hahn parameters} satisfy Assumption \ref{limit setting} when $f(\ell,m,n)\rightarrow(0,\rho)$, since
\begin{equation*}
	-\frac{\alpha+\beta}{\sqrt{\alpha\beta N}}\approx\frac{n}{\sqrt{\ell m(n-m)}}, \qquad \frac{N}{\alpha+\beta}\approx -\frac{\ell}{n}, \qquad \frac{\beta}{\alpha+\beta}\approx \frac{m}{n}.
\end{equation*}
Note that the scalar $\rho$ in Assumption \ref{limit setting} agrees with the one used here in this case.
Hence we are now in the position to apply the results of the previous section to $\psi_e^{\ell,m}(\xi)$, which is the Hahn polynomial having these parameters.

\begin{claim}\label{claim 3}
We have $\rho=1/2$.
In particular, $(0,1/2)$ is a unique accumulation point of $f(\Theta)$.
Moreover, we have $n=2m$ for all but finitely many $(\ell,m,n)\in\Theta$.
\end{claim}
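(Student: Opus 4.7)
The plan is to dispatch the three assertions of Claim~\ref{claim 3} in sequence, using as the main tool the asymptotic expansion of the integer zeros $\xi_i$ of $\psi_e^{\ell,m}(\xi)=Q_e(\xi;\alpha,\beta,N)$ (with $\alpha=m-n-1,\ \beta=-m-1,\ N=\ell$) provided by Proposition~\ref{behavior of zeros of Hahn}. First I would check that, along any sequence in $\Theta$ with $f(\ell,m,n)\to(0,\rho)$, Assumption~\ref{limit setting} is satisfied and the $\rho$ in that assumption matches the one here: Claim~\ref{claim 1} supplies $\alpha,\beta\to-\infty$ and $N\to\infty$, Claim~\ref{claim 2} gives $\epsilon\to+0$ via $-(\alpha+\beta)/\sqrt{\alpha\beta N}\approx n/\sqrt{\ell m(n-m)}$, and $\beta/(\alpha+\beta)\approx m/n\to\rho$.

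To force $\rho=1/2$, I would form the integer combination $(\xi_1+\xi_{-1})-2\xi_0$ in the odd-$e$ case and $(\xi_2+\xi_{-2})-(\xi_1+\xi_{-1})$ in the even-$e$ case. Using the symmetry $\eta_{-i}=-\eta_i$ from Proposition~\ref{bounds on zeros of Hermite}, the $\lambda_\epsilon\eta_i/\epsilon$ terms and the "centre" $\alpha_\epsilon N_\epsilon/((\alpha_\epsilon+\beta_\epsilon)\epsilon^2)$ cancel, leaving respectively
\[
\tfrac{2(2\rho-1)}{3}\eta_1^2+o(1)\qquad\text{and}\qquad\tfrac{2(2\rho-1)}{3}(\eta_2^2-\eta_1^2)+o(1).
\]
Proposition~\ref{bounds on zeros of Hermite}(i)--(ii) shows that for $e\geqslant e_0\geqslant 8$ each of $\eta_1^2$ and $\eta_2^2-\eta_1^2$ is strictly less than $3/2$, so the limit value has absolute value strictly less than $|2\rho-1|\leqslant 1$. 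An integer converging to a limit in $(-1,1)$ is eventually $0$, which forces the limit itself to vanish; since $\eta_1\neq 0$, this gives $\rho=1/2$. The uniqueness of the accumulation point is then immediate, because by \eqref{l/n tends to 0 fast} any accumulation point of $f(\Theta)$ inside the compact $[0,1]^2$ lies on $\{0\}\times[0,1]$.

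The final assertion, $n=2m$ for all but finitely many $(\ell,m,n)\in\Theta$, is the main obstacle and my proposed approach is by contradiction. Assume some infinite subsequence of $\Theta$ satisfies $d:=2m-n\neq 0$; along this subsequence $f$ still tends to $(0,1/2)$ by the above, so the Hahn-to-Hermite asymptotic applies. Repeating the integer-combination argument for \emph{every} $i$, one shows that $(\xi_i+\xi_{-i})-2\xi_0$ (odd $e$), resp.\ $(\xi_i+\xi_{-i})-(\xi_1+\xi_{-1})$ (even $e$), is eventually $0$, since $d/n\to 0$ drives the corresponding real limit inside $(-1,1)$. Consequently the integer zeros are eventually symmetric about a common centre $c\in\tfrac12\mathbb{Z}$. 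Applying Vieta to the integer coefficient $s_1/s_0$ in \eqref{coefficients} yields the exact identity
\[
c=\frac{1}{e}\sum_i\xi_i=\frac{e-1}{2}+\frac{(n-m-e+1)(\ell-e+1)}{n-2e+2},
\]
while Proposition~\ref{behavior of zeros of Hahn} (summed over $i$) yields the analytic expression $2c=\frac{2\ell(n+1-m)}{n+2}+\frac{2(2\rho-1)(e-1)}{3}+o(1)$, and the difference $C-\ell=-\ell d/(n+2)$ is forced to have vanishing fractional part. The plan is to couple this identity with the next integer Hahn coefficient $s_2/s_0$ in \eqref{coefficients} and with the bound $\ell<cn^\delta$, $\delta<1/2$, so that the only way to reconcile the two expressions for $c$ while keeping all relevant $s_i/s_0$ integral is $d=0$. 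The hardest step is controlling the $o(1)$-error in Proposition~\ref{behavior of zeros of Hahn} sharply enough to produce a clean exact congruence; for this I would re-examine the proof of Lemma~\ref{partial derivative wrt epsilon} to obtain an $O(\epsilon)$ remainder, so that the fractional parts in the identity are governed at the scale $1/\sqrt{\ell}$, which will be smaller than any non-zero contribution coming from $d\neq 0$ once $\ell$ is large.
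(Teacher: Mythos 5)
Your treatment of the first two assertions is correct and essentially identical to the paper's: you form the integer combinations $\xi_1+\xi_{-1}-2\xi_0$ (odd $e$) resp.\ $\xi_2+\xi_{-2}-\xi_1-\xi_{-1}$ (even $e$), apply Proposition \ref{behavior of zeros of Hahn} together with the bounds of Proposition \ref{bounds on zeros of Hermite}, and conclude that an integer tending to a limit of absolute value less than $1$ forces that limit to be $0$, whence $\rho=1/2$; uniqueness of the accumulation point then follows from \eqref{l/n tends to 0 fast}. You also correctly reach the key intermediate fact that, for all but finitely many triples, the integer zeros are \emph{exactly} symmetric about their average $\tilde{\xi}$ (this is where the paper also arrives).

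The gap is in how you try to extract $n=2m$ from that symmetry. Your plan compares the exact Vieta expression for the centre with an asymptotic expression coming from summing Proposition \ref{behavior of zeros of Hahn} over $i$, and then hopes that the ``non-zero contribution coming from $d\neq 0$'' dominates the $o(1)$ error. But that contribution is of order $\ell d/n$, and under \eqref{l/n tends to 0 fast} with $\delta<1/2$ it is itself $o(1)$ whenever $d=o(n^{1-\delta})$; the half-integrality of the centre only forces $n-2e+2\mid d(\ell-e+1)$, i.e.\ $d=0$ or $|d|\gtrsim n/\ell$, so an entire range $n^{1-\delta}\lesssim |d|=o(n)$ survives and no amount of sharpening the remainder in Lemma \ref{partial derivative wrt epsilon} to $O(\epsilon)$ will close it, since you would be comparing two quantities that both tend to $0$. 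What you are missing is that the symmetry is an exact algebraic statement and should be exploited algebraically rather than asymptotically: writing $\psi_e^{\ell,m}(\xi)/s_0=\sum_{i=0}^e w_{e-i}(\xi-\tilde{\xi})^i$, symmetry of the zeros about $\tilde{\xi}$ forces every odd-indexed coefficient to vanish exactly, in particular $w_3=0$; a direct computation from \eqref{monic integral} and \eqref{coefficients} gives
\begin{equation*}
	w_3=\binom{e}{3}(n-2\ell)(n-2m)\,\frac{(\ell-e+1)(m-e+1)(n-\ell-e+1)(n-m-e+1)}{(n-2e+2)^3(n-2e+3)(n-2e+4)},
\end{equation*}
and since $n-2\ell\geqslant m-\ell>0$ and all remaining factors are non-zero under Assumption \ref{assumption on Y}, this forces $n=2m$ with no error analysis at all. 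I recommend replacing the last third of your argument with this computation.
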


\begin{proof}
Let the $\xi_i$ be as in \eqref{zeros of Wilson polynomial}.
Then from Propositions \ref{behavior of zeros of Hahn} and \ref{bounds on zeros of Hermite} it follows that
\begin{equation}\label{show symmetry}
	\xi_i+\xi_{-i}-\xi_j-\xi_{-j}\rightarrow \frac{4\rho-2}{3}\bigl((\eta_i)^2-(\eta_j)^2\bigr) \qquad \text{for all} \ i,j,
\end{equation}
as $f(\ell,m,n)\rightarrow (0,\rho)$, where the $\eta_i$ are the zeros of the monic Hermite polynomial $h_e(\eta)$ from \eqref{monic Hermite} as in Proposition \ref{behavior of zeros of Hahn}.
Recall that $e\geqslant 8$ by Assumption \ref{prove by contradiction}.
Set $(i,j)=(1,0)$ in \eqref{show symmetry} if $e$ is odd, and $(i,j)=(2,1)$ if $e$ is even.
Then, since
\begin{equation*}
	\left|\frac{4\rho-2}{3}\right|\leqslant \frac{2}{3},
\end{equation*}
it follows from Proposition \ref{bounds on zeros of Hermite} that the RHS in \eqref{show symmetry} lies in the open interval $(-1,1)$.
However, the LHS in \eqref{show symmetry} is always an integer by \eqref{locations of zeros}, so that this is possible only when the RHS equals zero, i.e., $\rho=1/2$.
In particular, we have shown that $(0,1/2)$ is a unique accumulation point of $f(\Theta)$.

Again by \eqref{locations of zeros} and \eqref{show symmetry}, we then have
\begin{equation*}
	\xi_i+\xi_{-i}=\xi_j+\xi_{-j} \qquad \text{for all} \ i,j,
\end{equation*}
provided that $f(\ell,m,n)$ is sufficiently close to $(0,1/2)$.
By the uniqueness of the accumulation point and \eqref{inverse is finite}, this last condition on $f(\ell,m,n)$ can be rephrased as ``for all but finitely many $(\ell,m,n)\in\Theta$.''
Now, let $\tilde{\xi}$ be the average of the zeros $\xi_i$ of $\psi_e^{\ell,m}(\xi)$.
Then the above identity means that the $\xi_i$ are symmetric with respect to $\tilde{\xi}$.
Hence, if we write
\begin{equation*}
	\frac{\psi_e^{\ell,m}(\xi)}{s_0}=\sum_{i=0}^e w_{e-i}(\xi-\tilde{\xi})^i,
\end{equation*}
then we have
\begin{equation*}
	w_{2i-1}=0 \qquad (1\leqslant i\leqslant \lceil e/2\rceil)
\end{equation*}
for all but finitely many $(\ell,m,n)\in\Theta$.
On the other hand, using \eqref{monic integral} and \eqref{coefficients}, we routinely obtain
\begin{align*}
	w_3&=\binom{e}{3}(n-2\ell)(n-2 m) \\
	& \qquad \times \frac{(\ell-e+1) (m-e+1) (n-\ell-e+1) (n-m-e+1)}{(n-2 e+2)^3(n-2 e+3)(n-2 e+4)}.
\end{align*}
Hence, by Assumption \ref{assumption on Y}, that $w_3=0$ forces $n=2m$.
The claim is proved.
\end{proof}

By virtue of Claim \ref{claim 3}, we may now assume without loss of generality that
\begin{equation*}
	n=2m \qquad ((\ell,m,n)\in\Theta),
\end{equation*}
by discarding a finite number of exceptions.
Set
\begin{equation*}
	\mathfrak{k}=\left\lfloor \frac{e}{2}\right\rfloor,
\end{equation*}
and let $c'$ be a constant such that $c'>2^{\delta}c$.
Note that
\begin{equation*}
	\mathfrak{k}\geqslant\mathfrak{k}_0=\mathfrak{k}_0(2,\delta)
\end{equation*}
by Assumption \ref{prove by contradiction}.
Let $(\ell,m,2m)\in\Theta$.
We have
\begin{equation*}
	c\cdot (2m)^{\delta}<c'\cdot (m-e+1)^{\delta}
\end{equation*}
provided that $m$ is large.
Hence it follows from Proposition \ref{variation of Schur's theorem} (applied to $\vartheta=2$) and \eqref{l/n tends to 0 fast} that if $m$ is sufficiently large then there is a prime $p>2\mathfrak{k}+1$ such that
\begin{equation*}
	\nu_p((2m-2e+3)(2m-2e+5)\cdots(2m-2e+2\mathfrak{k}+1))>\nu_p((\ell-e+1)_{2\mathfrak{k}}),
\end{equation*}
where $\nu_p(\mathfrak{n})$ denotes the exponent of $p$ in $\mathfrak{n}$.
Assuming that this is the case, let $i$ $(1\leqslant i\leqslant\mathfrak{k})$ be such that
\begin{equation*}
	\nu_p(2m-2e+2i+1)>0.
\end{equation*}
Observe that $i$ is unique since $p>2\mathfrak{k}+1$, so that we have
\begin{equation*}
	\nu_p(2m-2e+2i+1)>\nu_p((\ell-e+1)_{2\mathfrak{k}}).
\end{equation*}
Moreover, we have
\begin{equation*}
	\gcd(2m-2e+2i+1,m-e+i+j)=\gcd(2j-1,m-e+i+j)<p
\end{equation*}
for $1\leqslant j\leqslant i$, from which it follows that
\begin{equation*}
	\nu_p((m-e+i+1)_i)=0.
\end{equation*}
By these comments and since
\begin{equation*}
	2i\leqslant e<p,
\end{equation*}
it follows from \eqref{coefficients} (with $n=2m$) that
\begin{align*}
	\nu_p\!\left(\frac{s_{2i}}{s_0}\right)&=\nu_p\!\left(\frac{(m-e+1)_{2i}(\ell-e+1)_{2i}}{(2m-2e+2)_{2i}}\right) \\
	&= \nu_p\!\left(\frac{(m-e+i+1)_i(\ell-e+1)_{2i}}{2^i(2m-2e+3)(2m-2e+5)\cdots(2m-2e+2i+1)}\right) \\
	&<0.
\end{align*}
However, this contradicts the fact that $s_{2i}/s_0$ is a non-zero integer.
Hence we now conclude that $\Theta$ must be finite.

The proof of Theorem \ref{counterpart of Bannai's theorem} is complete.

\section*{Acknowledgments}

Hajime Tanaka was supported by JSPS KAKENHI Grant Numbers JP25400034 and JP17K05156.
Yan Zhu was supported by NSFC Grant No.~11801353.
This work was also partially supported by the Research Institute for Mathematical Sciences at Kyoto University.


\appendix

\section*{Appendix. Proof of Proposition \ref{variation of Schur's theorem}}

Our proof of Proposition \ref{variation of Schur's theorem} is a slight modification of (the first part of) that of \cite[Satz I]{Schur1929SPAW}.

For a positive integer $\mathfrak{n}$, let
\begin{equation*}
	\chi_{\mathfrak{n}}=1\cdot 3\cdot 5\cdots (2\mathfrak{n}-1)=\frac{(2\mathfrak{n})!}{2^{\mathfrak{n}}\mathfrak{n}!}.
\end{equation*}
Observe that the exponent $\nu_p(\chi_{\mathfrak{n}})$ of an odd prime $p$ in $\chi_{\mathfrak{n}}$ is given by
\begin{equation}\label{exponent of p}
	\nu_p(\chi_{\mathfrak{n}}) = \sum_{i=1}^{\lfloor\log_p (2\mathfrak{n})\rfloor} \!\!\left(\left\lfloor\frac{2\mathfrak{n}}{p^i}\right\rfloor - \left\lfloor\frac{\mathfrak{n}}{p^i}\right\rfloor \right) = \sum_{i=1}^{\lfloor\log_p (2\mathfrak{n})\rfloor} \! \left\lfloor\frac{\mathfrak{n}}{p^i}+\frac{1}{2}\right\rfloor,
\end{equation}
where we have used
\begin{equation*}
	\lfloor \xi\rfloor + \left\lfloor \xi+\frac{1}{2}\right\rfloor = \lfloor 2\xi\rfloor \qquad (\xi\in\mathbb{R}).
\end{equation*}

Now, let $(\mathfrak{a},\mathfrak{b})$ be a pair of positive integers with
\begin{equation}\label{b is bounded in terms of a}
	\mathfrak{b}<c\cdot\mathfrak{a}^{\delta},
\end{equation}
which does not satisfy the desired property about a prime factor; in other words,
\begin{equation}\label{inequality for exponents}
	\nu_p(\chi_{\mathfrak{a}+\mathfrak{k}}) - \nu_p(\chi_{\mathfrak{a}}) \leqslant \nu_p((\mathfrak{b}+\lfloor \vartheta\mathfrak{k}\rfloor)!) - \nu_p(\mathfrak{b}!) \qquad \text{if} \ p>2\mathfrak{k}+1.
\end{equation}
Our aim is to show that $\mathfrak{a}$ is bounded in terms of $\vartheta,\delta,c$, and $\mathfrak{k}$, and hence so is $\mathfrak{b}$ by \eqref{b is bounded in terms of a}, from which it follows that there are only finitely many such pairs.
(We will specify $\mathfrak{k}_0=\mathfrak{k}_0(\vartheta,\delta)$ at the end of the proof.)
To this end, we may assume for example that
\begin{equation}\label{a is large}
	\mathfrak{a}>\mathfrak{k}, \qquad c\cdot\mathfrak{a}^{\delta}>\mathfrak{k}+1.
\end{equation}
Without loss of generality, we may also assume that
\begin{equation}\label{b is large}
	\mathfrak{b}>\mathfrak{k},
\end{equation}
for otherwise the pair $(\mathfrak{a},\mathfrak{k}+1)$ would also satisfy \eqref{b is bounded in terms of a} and \eqref{inequality for exponents}.

Let
\begin{equation*}
	\mathfrak{s}=\frac{\chi_{\mathfrak{a}+\mathfrak{k}}}{\chi_{\mathfrak{k}} \chi_{\mathfrak{a}}}\cdot\frac{\mathfrak{b}!}{(\mathfrak{b}+\lfloor\vartheta\mathfrak{k}\rfloor)!}.
\end{equation*}
Then from \eqref{inequality for exponents} it follows that
\begin{equation}\label{upper bound on s}
	\mathfrak{s}\leqslant \prod_{3\leqslant p\leqslant 2\mathfrak{k}+1} p^{\nu_p(\mathfrak{s})},
\end{equation}
where the product in the RHS is over the odd primes $p\leqslant 2\mathfrak{k}+1$, and where
\begin{equation*}
	\nu_p(\mathfrak{s})= \nu_p(\chi_{\mathfrak{a}+\mathfrak{k}}) - \nu_p(\chi_{\mathfrak{k}}) -\nu_p(\chi_{\mathfrak{a}}) + \nu_p(\mathfrak{b}!) -\nu_p((\mathfrak{b}+\lfloor\vartheta\mathfrak{k}\rfloor)!).
\end{equation*}
By \eqref{exponent of p}, for every odd prime $p$ we have
\begin{align*}
	\nu_p(\mathfrak{s}) &\leqslant \nu_p(\chi_{\mathfrak{a}+\mathfrak{k}}) - \nu_p(\chi_{\mathfrak{k}}) -\nu_p(\chi_{\mathfrak{a}}) \\
	&= \sum_{i=1}^{\lfloor\log_p (2\mathfrak{a}+2\mathfrak{k})\rfloor} \!\! \left( \left\lfloor\frac{\mathfrak{a}+\mathfrak{k}}{p^i}+\frac{1}{2}\right\rfloor - \left\lfloor\frac{\mathfrak{k}}{p^i}+\frac{1}{2}\right\rfloor - \left\lfloor\frac{\mathfrak{a}}{p^i}+\frac{1}{2}\right\rfloor \right).
\end{align*}
Note that
\begin{equation*}
	\left\lfloor \xi+\eta+\frac{1}{2} \right\rfloor - \left\lfloor \xi+\frac{1}{2} \right\rfloor - \left\lfloor \eta+\frac{1}{2} \right\rfloor \in\{-1,0,1\} \qquad (\xi,\eta\in\mathbb{R}).
\end{equation*}
Hence it follows that
\begin{equation}\label{upper bound on nu_p(s)}
	\nu_p(\mathfrak{s}) \leqslant \log_p (2\mathfrak{a}+2\mathfrak{k}) = \frac{\ln(2\mathfrak{a}+2\mathfrak{k})}{\ln p}
\end{equation}
for every odd prime $p$.
From \eqref{upper bound on s} and \eqref{upper bound on nu_p(s)} it follows that
\begin{equation}\label{upper bound on ln(s)}
	\ln\mathfrak{s}\leqslant (\pi(2\mathfrak{k}+1)-1) \ln(2\mathfrak{a}+2\mathfrak{k}),
\end{equation}
where $\pi(\mathfrak{n})$ denotes the number of primes at most $\mathfrak{n}$.

On the other hand, we have
\begin{equation*}
	\mathfrak{s}=\frac{(2\mathfrak{a}+2\mathfrak{k})!\mathfrak{k}!\mathfrak{a}!}{(\mathfrak{a}+\mathfrak{k})!(2\mathfrak{k})!(2\mathfrak{a})!} \cdot \frac{\mathfrak{b}!}{(\mathfrak{b}+\lfloor\vartheta\mathfrak{k}\rfloor)!}.
\end{equation*}
Using Stirling's formula
\begin{equation*}
	\ln (\mathfrak{n}!) = \left(\mathfrak{n}+\frac{1}{2}\right)\ln\mathfrak{n}-\mathfrak{n}+\frac{\ln2\pi}{2}+ r_{\mathfrak{n}},
\end{equation*}
where
\begin{equation*}
	0<r_{\mathfrak{n}}<\frac{1}{12\mathfrak{n}},
\end{equation*}
we obtain
\begin{align}
	\ln\mathfrak{s}&> (\mathfrak{a}+\mathfrak{k})\ln(\mathfrak{a}+\mathfrak{k}) - \mathfrak{k}\ln\mathfrak{k} - \mathfrak{a}\ln\mathfrak{a} +\vartheta\mathfrak{k} -2 \label{expression of ln(s)} \\
	& \qquad\qquad  + \!\left(\mathfrak{b}+\frac{1}{2}\right) \ln\mathfrak{b} - \!\left(\mathfrak{b}+\vartheta\mathfrak{k}+\frac{1}{2}\right)\ln(\mathfrak{b}+\vartheta\mathfrak{k}). \notag\end{align}

Let
\begin{equation*}
	\tilde{\mathfrak{a}}=\frac{\mathfrak{a}}{\mathfrak{k}}, \qquad \tilde{\mathfrak{b}}=\frac{\mathfrak{b}}{\mathfrak{k}}.
\end{equation*}
Note that
\begin{equation}\label{inequalities for new parameters}
	\tilde{\mathfrak{a}},\tilde{\mathfrak{b}}> 1,
\end{equation}
in view of \eqref{a is large} and \eqref{b is large}.
With this notation, we have
\begin{align}
	\ln\mathfrak{s} &> \mathfrak{k}\ln\tilde{\mathfrak{a}} +(\tilde{\mathfrak{a}}+1)\mathfrak{k} \ln\!\left(1+\frac{1}{\tilde{\mathfrak{a}}}\right) -\vartheta\mathfrak{k}\ln\mathfrak{k} +\vartheta\mathfrak{k} - 2 \label{lower bound on ln(s)} \\
	& \qquad\qquad -\vartheta\mathfrak{k}\ln\tilde{\mathfrak{b}} -\!\left((\tilde{\mathfrak{b}}+\vartheta)\mathfrak{k}+\frac{1}{2}\right)\ln\!\left(1+\frac{\vartheta}{\tilde{\mathfrak{b}}}\right) \notag \\
	&> \mathfrak{k}\ln\tilde{\mathfrak{a}} -\vartheta\mathfrak{k}\ln\mathfrak{k} -2 -\vartheta\mathfrak{k}\ln\tilde{\mathfrak{b}} -\!\left(\vartheta\mathfrak{k}+\frac{1}{2}\right)\ln\!\left(1+\frac{\vartheta}{\tilde{\mathfrak{b}}}\right) \notag \\
	&> (1-\vartheta\delta)\mathfrak{k}\ln\tilde{\mathfrak{a}} -\vartheta\mathfrak{k}\ln c -\vartheta\delta\mathfrak{k}\ln\mathfrak{k} -2 - \!\left(\vartheta\mathfrak{k}+\frac{1}{2}\right) \ln (1+\vartheta), \notag
\end{align}
where the first inequality is a restatement of \eqref{expression of ln(s)}, the second follows from
\begin{equation*}
	0<\ln(1+\xi)<\xi \qquad (\xi>0),
\end{equation*}
and the last one follows from \eqref{b is bounded in terms of a} and \eqref{inequalities for new parameters}.

Concerning the prime-counting function $\pi(\mathfrak{n})$, it is known that \cite[(3.6)]{RS1962IJM}
\begin{equation*}
	\pi(\mathfrak{n})<1.25506\,\frac{\mathfrak{n}}{\ln\mathfrak{n}} \qquad (\mathfrak{n}>1).
\end{equation*}
By this, \eqref{upper bound on ln(s)}, and \eqref{inequalities for new parameters}, we have
\begin{align}
	\ln\mathfrak{s} &\leqslant (\pi(2\mathfrak{k}+1)-1) \!\left(\ln\tilde{\mathfrak{a}}+\ln 2\mathfrak{k}\!\left(1+\frac{1}{\tilde{\mathfrak{a}}}\right)\!\right) \label{modified upper bound on ln(s)} \\
	&< \left(1.25506\,\frac{2\mathfrak{k}+1}{\ln(2\mathfrak{k}+1)}-1\right) (\ln\tilde{\mathfrak{a}}+\ln 4\mathfrak{k}). \notag
\end{align}

Combining \eqref{lower bound on ln(s)} and \eqref{modified upper bound on ln(s)}, it follows that
\begin{align}
	\bigg((1-\vartheta\delta)\mathfrak{k} & - 1.25506 \left.\frac{2\mathfrak{k}+1}{\ln(2\mathfrak{k}+1)}+1\right) \ln\tilde{\mathfrak{a}} \label{upper bound on ln(a)} \\
	&< \vartheta\mathfrak{k}\ln c +\vartheta\delta\mathfrak{k}\ln\mathfrak{k} +2 +\!\left(\vartheta\mathfrak{k}+\frac{1}{2}\right) \ln (1+\vartheta) \notag \\
	& \qquad\qquad +\!\left(1.25506\,\frac{2\mathfrak{k}+1}{\ln(2\mathfrak{k}+1)}-1\right) \ln 4\mathfrak{k}. \notag
\end{align}
If we set
\begin{equation*}
	\mathfrak{k}_0=\mathfrak{k}_0(\vartheta,\delta)=\frac{1}{2}\!\left(\exp\!\left(\frac{2.51012}{1-\vartheta\delta}\right)-1\right)>0
\end{equation*}
for example, then we have
\begin{equation*}
	(1-\vartheta\delta)\mathfrak{k}-1.25506\,\frac{2\mathfrak{k}+1}{\ln(2\mathfrak{k}+1)}+1\geqslant \frac{1+\vartheta\delta}{2}>0 \qquad (\mathfrak{k}\geqslant\mathfrak{k}_0).
\end{equation*}
Hence, whenever $\mathfrak{k}\geqslant\mathfrak{k}_0$, it follows from \eqref{upper bound on ln(a)} that $\ln\mathfrak{a}=\ln\tilde{\mathfrak{a}}+\ln\mathfrak{k}$ is bounded in terms of $\vartheta,\delta,c$, and $\mathfrak{k}$, from which and \eqref{b is bounded in terms of a} it follows that there are only finitely many choices for the pairs $(\mathfrak{a},\mathfrak{b})$.

This completes the proof of Proposition \ref{variation of Schur's theorem}.

\end{document}